\documentclass[leqno,12pt]{amsart} 
\usepackage{amsmath,amsthm}
\usepackage{amssymb,amscd,latexsym}
\usepackage{boxedminipage}
\usepackage{eepic}
\usepackage{epic}
\usepackage{wrapfig}
\usepackage{graphicx}


\theoremstyle{plain}

\newtheorem*{def-theo}{Definition-Theorem}

\theoremstyle{definition}




\theoremstyle{remark}
\newtheorem{remark}{Remark}

\newtheorem{thm}{Theorem}[section]

\newtheorem{lem}[thm]{Lemma}

\theoremstyle{definition}

\newtheorem{definition}[thm]{Definition}

\newtheorem{rem}[thm]{Remark}
\newtheorem{theorem}[thm]{Theorem}


%

%
\numberwithin{equation}{section}
\newcommand{\bp}{\begin{pmatrix}}
\newcommand{\ep}{\end{pmatrix}}
\newcommand{\bps}{\begin{smallmatrix}}
\newcommand{\eps}{\end{smallmatrix}}
\def\C{{\mathbb C}}

\def\R{{\mathbb R}}
\def\Z{{\mathbb Z}}

\def\al{{\alpha}}

\def\z{{\mathfrak z}}

\def \wt#1{\widetilde{#1}}

\def \0{{\bf 0}}
\def \1{{\bf 1}}

\def\Res{{\operatorname{Res}}}

\def\Hom{\mathrm{Hom}}

\def\wt{\mathrm{wt}}

\def \Im{\mathrm{Im}}

\def \rad{\mathrm{rad}}

\def \mf#1#2#3#4{
\xymatrix{{#1}\  \ar@<0.4ex>[r]^{{#2}} & \ {#4}
\ar@<0.4ex>[l]^{{#3}}
}
}

\def \mfs#1#2#3#4{\!
\xymatrix@C=1.5em{{#1} \! \ar@<0.2ex>[r]^{{#2}} & \! {#4}
\ar@<0.2ex>[l]^{{#3}}
}
\!}

\def \mfl#1#2#3#4{
\xymatrix@C=2.6em{{#1}\  \ar@<0.4ex>[r]^{{#2}} &\  {#4}
\ar@<0.2ex>[l]^{{#3}}
}
}

\def \mfss#1#2#3#4{\!
\xymatrix@C=1.5em{{#1} \ar@<0.3ex>[r]^{{#2}} & {#4}
\ar@<0.3ex>[l]^{{#3}}
}
\!}




\begin{document}

\title[Period integrals of type $\mathrm{A_2, B_2}$ and $\mathrm{G_2}$]{A view on Elliptic Integrals \\ from Primitive Forms\\
\vspace{0.2cm}
{\footnotesize  (Period Integrals of type $\mathrm{A_2, B_2}$ and $\mathrm{G_2}$)}
}

\author{Kyoji Saito}
\address{
Institute for Physics and Mathematics of the Universe, the University of Tokyo, 5-1-5 Kashiwanoha, Kashiwa, 277-8583, Japan}
\email{kyoji.saito@ipmu.jp}

\address{
Research Institute for Mathematical Sciences, Kyoto University, Sakyoku Kitashirakawa, Kyoto, 606-8502, Japan}
\address{
Laboratory of AGHA, Moscow Institute of Physics and Technology, 9 Institutskiy per., Dolgoprudny, Moscow Region, 141700, Russian Federation}

\date{}
\maketitle

 {\renewcommand{\baselinestretch}{0.1}


Elliptic integrals, since Euler's finding of addition theorem 1751, has been studied extensively from various view points. Present paper gives a  view point from primitive integrals of types $\mathrm{A_2}, \mathrm{B_2}$ and $\mathrm{G_2}$ for the three families of elliptic curves of Weierstrass, Jacobi-Legendre and Hesse, respectively. 
We solve Jacobi inversion problem for the period maps in the sense explained in the introduction (c.f.\ \cite{Si} Chap.1,13) 
by introducing certain generalized  Eisenstein series of types $\mathrm{A_2}, \mathrm{B_2}$ and $\mathrm{\mathrm{G_2}}$, which generate the ring of modular forms on the period domain for the congruence subgroups $\Gamma_1(N)$ {\footnotesize ($N=1,2$ and $3$)}.  In particular, Eisenstein series of type $\mathrm{B_2}$ includes the case of weight two, and Eisenstein series of type $\mathrm{G_2}$ includes the cases of weight one and two, which seem to be  of new feature. The goal of the paper is a partial answer to the discriminant conjecture, which claims an existence of certain cusp form  of weight 1 with character of topological origin, giving a power root of the discriminant form (Aspects Math., E36,p.\ 265-320.\ 2004). 
See \S12 Concluding Remarks for more about back grounds of present paper.

\tableofcontents

\section{Introduction}
We study period integrals for three families of elliptic curves: Weierstrass family, Legendre-Jacobi family and Hesse family. We call the families of type $\mathrm{A_2}, \mathrm{B_2}$ and $\mathrm{G_2}$, respectively, according to the lattice structure of vanishing cycles studied in \S3.  In order to treat these three types  simultaneously, we use notation $\mathrm{I_2}(p)$ for the dihedral groups \cite{Bourbaki} and  identify
$\mathrm{A_2}=\mathrm{I_2}(3)$, $\mathrm{B_2}=\mathrm{I_2}(4)$ and $\mathrm{\mathrm{G_2}}=\mathrm{I_2}(6)$. 
Then the theory for the  type $\mathrm{A_2}$ is nothing but the classical well-known theory of elliptic integrals for the Weierstrass family of elliptic curves.  The purpose of the present paper is to show that there exist some parallel worlds for types $\mathrm{B_2}$ and $\mathrm{G_2}$, even in a somewhat deeper manner.

\smallskip
 We refer the reader to \S12 Concluding Remarks for the motivation and the background of the present study. Therefore, in the present introduction, we restrict ourselves to  brief explanations of the contents. 

\smallskip
As mentioned, we study the three  families of elliptic curves of types $\mathrm{A_2}$, $\mathrm{B_2}$ and $\mathrm{\mathrm{G_2}}$ \eqref{eq:equation}. 
In the first sections \S\S2-4, we study the geometric and topological aspects of the family. Namely, the geometric family of affine (i.e.\ punctured by points at infinity) elliptic curves over the base space $S_{\mathrm{I_2}(p)}$ together with the discriminant loci $D_{\mathrm{I_2}(p)}$ is introduced in \S2. The number $N:=[p/2]$ of points at infinity is called the {\it level}. That is, we study the period maps for the families of level 1, 2 and 3.

\smallskip
Then in \S3, by the help of a real structure of the family, the lattice of vanishing cycles in the fiber is described  in terms of classical  root lattice of type $\mathrm{A_2}$, $\mathrm{B_2}$ and $\mathrm{\mathrm{G_2}}$. 
The fundamental group of the complement $S_{\mathrm{I_2}(p)}\setminus D_{\mathrm{I_2}(p)}$ is described in \S4 in terms of the Artin group of type $\mathrm{A_2}$, $\mathrm{B_2}$ and $\mathrm{\mathrm{G_2}}$. Actually, the image of  the monodromy representation becomes the congruence modular group $\Gamma_1(N)$ of level $N=1,2$ and 3 \eqref{eq:congruence1}.
 In particular, the expression of the modular group leads to a construction of its certain character $\vartheta_{\mathrm{I_2(p)}}$ \eqref{eq:character} whose $k(\mathrm{I_2(}p))$th power \eqref{eq:k} is the sign character: $\Gamma_1(N)\to \{\pm1\}$.  We shall comeback to the character $\vartheta_{\mathrm{I_2(p)}}$ at the final Theorem \ref{cuspform} of the present paper. 

\smallskip
The primitive form and associated period map, i.e.\ the map obtained by the values of  integrals of the primitive form over vanishing cycles from the monodromy covering space $\widetilde{S}_{\mathrm{I_2}(p)}$ of the compliment of the discriminant to the period domain $\widetilde{\mathbb{H}}$, are introduced in \S5. We ask whether the period map is invertible from the period domain $\widetilde{\mathbb{H}}$ to the original defining domain $S_{\mathrm{I_2}(p)}\setminus D_{\mathrm{I_2}(p)}$ (in the present paper, we shall call this question {\it Jacobi inversion problem}). 

\smallskip
In \S\S6-9, we study the analytic aspects of the theory.  Namely,  we fix a point at infinity and consider the indefinite integral of the primitive form over open paths on the elliptic curve starting from the point at infinity. Actually, regarding this integral value $z$ as the time variable, the inverse function of the integral (a meromorphic doubly periodic function in $z$) becomes a solution of the Hamilton equation \eqref{eq:Hamiltonean} of the motion together with the energy constraint \eqref{eq:energy}.  Conversely, any formal meromorphic solutions of the Hamilton equations at a point infinity are convergent and the set of solutions is in one to one correspondence with the set of points at infinity (which were chosen as the initial condition for the indefinite integral) (\S7 Lemma \eqref{eq:LaurentSolution}).

\smallskip
This equivalence of geometric solutions and formal solutions is a key step in the next stage to solve the Jacobi inversion problem. Namely, in the next section \S9, by a help of the Hamilton equations \eqref{eq:Hamiltonean} and \eqref{eq:energy},  we can expand those global meromorphic solutions on the full $z$-plane into  partial fractions by the help of Weierstrass $\mathfrak{p}$-functions or zeta-functions, which depends  only on the period variable $(\omega_1,\omega_0)\in \widetilde{\mathbb{H}}$.   Then, again by expanding these global meromorphic functions into Laurent series at the origin $z=0$,  as coefficients of the expansion, we obtain an infinite sequence of functions in $(\omega_1,\omega_0)\in \widetilde{\mathbb{H}}$, which we call the {\it Eisenstein series of type $\mathrm{I_2}(p)$} in \S10. Actually, the ring of Eisenstein series, as functions defined on $\widetilde{\mathbb{H}}$, by pulling back by the period map, is identified with ring of the Cartesian coordinate ring of the parameter space 
$S_{\mathrm{I_2}(p)}$ (Theorem \ref{primitiveautomorphicform}). 
Then, those Eisenstein series enable us to construct the inversion map $\widetilde{\mathbb{H}} \to S_{\mathrm{I_2}(p)}$, which finally leads us to the solution of Jacobi inversion problem in the section \S9.

\smallskip
Next in the section \S10, we identify the ring of Eisenstein series of types $\mathrm{A_2}, \mathrm{B_2}$ and $\mathrm{G_2}$ with the ring of modular forms of the congruence group $\Gamma_1(N)$, where the ring of modular forms were already determined explicitly by Aoki-Ibukiyama \cite{A-S}. In order to determine the homomorphism exactly, we have  to determine the exact values of the Eisenstein series at cusps of the modular group. Actually, Eisenstein series of type $\mathrm{A_2}$ are classical, whose Fourier expansions are well known. Eisenstein series of types $\mathrm{B_2}$ and $\mathrm{G_2}$ are no-longer classical. However, when their weights are larger or equal than 3,  then their expressions are still obtained by a ``shift" of the constant term of the classical Eisenstein series (see \eqref{eq:classicalEisenstein}). Then, it is still possible to evaluate the values of the shifted classical Eisenstein series at cusps by  using either the classical Riemann zeta function or Dirichlet's L-function (see \S10 Table 2). 

\smallskip
However, Eisenstein series of types $\mathrm{B_2}$ and $\mathrm{G_2}$ of weights less than or equal to 2 have expression by special values of Weierstrass $\mathfrak{p}$-functions or by difference of special values of Weierstrass zeta-functions, which seem to be less standard. We determine their values at cusps in a separate note joint with Aoki \cite{A-S} (c.f.\ \cite{D-S,Lang}). These determinations lead us to the identification  of  the ring Eisenstein series of types $\mathrm{A_2}, \mathrm{B_2}$ and $\mathrm{G_2}$  with the ring of modular forms of $\Gamma_1(N)$ ($N=1,2$ and $3$) (Theorem \ref{Eisen-Modular}).
It is marvelous that  this identification induces further a one to one correspondence of the set of irreducible components of the discriminant of our family of elliptic curves with the set of modular form (up to constant factors) which vanishes exactly once at one $\Gamma_1(N)$-orbit of cusps (\S10 Lemma \ref{cusp-disc}). 

\smallskip
This  modular form, generating the ideal vanishing at an equivalence class of cusps, on one hand as an equation for an irreducible component of the discriminant, is nowhere vanishing on $\widetilde{\mathbb{H}}$. On the other hand as a modular form described by theta-series, it has integral Fourier coefficients.  Such form can be expressed as suitable quotient of products of shifted Dedekind eta-functions (see Table 5).  Doing this for all cusps, we determine the eta-product expressions of the discriminant \eqref{eq:discriminant-eta} and the reduced discriminant  \eqref{eq:reduced-discriminant-eta} of our family of elliptic curves.  

\smallskip
These expressions lead us to the final Theorem  \ref{cuspform}:

\smallskip
(1) {\it  
There exists a cusp form of weight 1 of the congruence group $\Gamma_1(N)$ 
with respect to the character $\vartheta_{\mathrm{I_2(p)}}$ in \S4 such  the reduced discriminant of the family of elliptic curves for the type $\mathrm{I_2}(p)$ is identified with the  $2k(\mathrm{I_2}(p))$th power of the cusp form ($p=3,4$ and $6$). }

(2) {\it The discriminants of the families of elliptic curves of all types $\mathrm{A_2}$, $\mathrm{B_2}$ and $\mathrm{\mathrm{G_2}}$, up to a rational constant times the power $\pi^{12}$, are identified with the modular discriminant $q\prod_{n=1}^\infty(1-q^n)^{24} $ of weight 12}.

\medskip 
These two statements give positive answers to the discriminant conjecture 6 posed in \cite{S5} \S6, 
and we close the present paper. 
  

\section{\!\!Families of elliptic curves of type  $\mathrm{A_2}$, $\mathrm{B_2}$ and $\mathrm{\mathrm{G_2}}$}

We start with the three families, defined by the equations \eqref{eq:equation},  of affine elliptic curves in the $(x,y)$-plane parametrized by two weighted homogeneous coordinates  $\underline{g}=(g_s,g_l)$ (where $s$ and $l$ stands for small or large weights of the coordinates so that the equations become weighted homogeneous polynomials. See Table 1). 
\begin{equation}
\begin{array} {rcl}
\label{eq:equation}
\mathrm{A_2} & : &
F_{\mathrm{A_2}}(x,y,\underline{g}):=y^2-(4x^3-g_{s}x-g_l) \\
\\
 \mathrm{B_2} &: &
F_{\mathrm{B_2}}(x,y,\underline{g}):=y^2-(x^4-g_{s}x^2+g_l+g_{s}^2/8) \\
\\
\mathrm{G_2} & : &
F_{\mathrm{G_2}}(x,y,\underline{g}):=x(y^2-x^2)+ g_{s}(3x^2+y^2) -g_l-2g_{s}^3.
\end{array}
\end{equation}
Historically, they are called  Weierstrass、Legendre-Jacobi and Hesse family of elliptic curves (Exactly, compared with historical expressions, some coordinate change is done from a view point of primitive forms. See Footnote 1 and 2).  


As given in the left side of the equations, we call the families by the names of root systems of rank 2, i.e.\ by $\mathrm{A_2}, \mathrm{B_2}$ and $\mathrm{G_2}$.  \footnote
{To be exact, when we call  a  family of type $\mathrm{A_2}, \mathrm{B_2}$ or $\mathrm{G_2}$, we shall mean the family given in the equation \eqref{eq:equation} together with an action of an automorphism given in \eqref{eq:autom}  of the family. They are subfamilies of  the bigger families of affine elliptic curves of type $\mathrm{A_2, A_3}$ and $\mathrm{D_4}$ which admits an automorphism $\sigma$ of order 1, 2 and 3, respectively, so that 
the present family is the subfamilies over the parameters which are fixed by $\sigma$ (see Footnote 9).
The study of the periods for the types $\mathrm{A_3}$ and $\mathrm{D_4}$ (unpublished) are beyond the scope of present paper  and shall appear elsewhere. See also Footnote 7 and 14.
}
 We shall justify this renaming in \S3 from a view point of vanishing cycles. 
 In order to treat these three cases simultaneously, let us use the notation $\mathrm{I_2}(p)$ for dihedral groups \cite{Bourbaki}. Namely, let us recall the following identifications. 
\begin{equation}
\label{eq:dihedral}
\mathrm{A_2}= \mathrm{I_2}(3),\quad  \mathrm{B_2}= \mathrm{I_2}(4) , \quad \mathrm{G_2}= \mathrm{I_2}(6) 
\end{equation}
These three cases are exactly the cases when the dihedral group of type $\mathrm{I_2}(p)$  is crystallographic, corresponding to classical root systems defined over $\mathbb{Z}$ which shall play crucial roles in the present paper.

 Let us give Table of weights of the variables in the equation  $F_{\mathrm{I_2}(p)}$. We  normalize them so that the total weight of  $F_{\mathrm{I_2}(p)}$  is equal to 1.
$$
\begin{array}{ccccccccc}
 \vspace{0.1cm}
         &\!\! \wt(F_{\mathrm{I_2}(p)}) \!\!&\!  \wt(x)  & \wt(y) \!\!&\! s\!\!=\!\!\wt(g_s) \!\!&\!  l\!\!=\!\!\wt(g_l)  \!\!&\!\!  \wt(\Delta_{\mathrm{I_2}(p)} )\!\!& \wt(z) \\
\mathrm{A_2}  \!\! &   1                    &    1/3   &   1/2   &     2/3       &\!      1       &\!  2  &\! \!   -1/6 \\
\mathrm{B_2}  \!\! &   1                    &    1/4   &   1/2   &     1/2     & \!   1        & \! 3     & \!\!  -1/4  \\
\mathrm{G_2}  \!\! &   1                    &    1/3   &   1/3   &     1/3     &\!    1      &\!  4       &\!\!   -1/3 \\
\end{array}
$$
\centerline{\rm\large  Table 1:\quad {\normalsize Weights of functions and coordinates}}

\bigskip
\noindent
Here, $\Delta_{\mathrm{I_2}(p)}$ and $z$ are the discriminant introduced below \eqref{eq:discriminant} and the Hamilton time coordinate $z$ of the elliptic curve $\tilde{E}_{\mathrm{I_2}(p)}$ introduced in \S6 \eqref{eq:Hamilton-time}, respectively. Note that the weight {\small $\wt(z)\! := \! \wt(x) \! + \! \wt(y) \! - \! \wt(F_{\mathrm{I_2}(p)})$} of the variable $z$  is negative caused from a classification of vanishing cycles, where the negativity plays an essential role in the present paper.

The equations \eqref{eq:equation} define geometric families of affine elliptic curves. Namely, let  us consider the morphisms:
\begin{equation}
\label{eq:family}
\begin{array}{rrl}
\qquad \qquad  & \pi_{\mathrm{I_2}(p)}\ : & X_{\mathrm{I_2}(p)} \ \longrightarrow  \ S_{\mathrm{I_2}(p)}, \quad \qquad p=3,4 \text{ or } 6\\
\end{array}
\end{equation}
where $S_{\mathrm{I_2}(p)}$ 
is the two dimensional complex parameter space of the coordinates $\underline{g}=(g_s,g_l)$, 
\footnote{
The coordinates $\underline{g}=(g_s,g_l)$ are, up to constant factor, flat coordinates of the family \cite{S0} \cite{S5}, whose weights are equal to exponents of $\mathrm{I_2}(p)$ plus $1/p$ \cite{Bourbaki}.}
 $X_{\mathrm{I_2}(p)}$ is the affine subvariety  in $\mathbb{C}^2\times S_{\mathrm{I_2}(p)}$ defined by the equation $F_{\mathrm{I_2}(p)}=0$,  and $\pi_{\mathrm{I_2}(p)}$ is the morphism induced on $X_{\mathrm{I_2}(p)}$ from the projection to the second factor $S_{\mathrm{I_2}(p)}$, respectively. 

The relative critical point set $C_F$ of the map $\pi_{\mathrm{I_2}(p)}$ defied by $\frac{\partial F_{\mathrm{I_2}(p)}}{\partial x}=\frac{\partial F_{\mathrm{I_2}(p)}}{\partial y}=0$ 
lies proper finite over the base parameter space $S_{\mathrm{I_2}(p)}$. The image set $ \pi_{\mathrm{I_2}(p)}(C_F)$  in  $S_{I_2(p)}$ is a one codimensional subvariety 
\begin{equation}
\label{eq:discriminantloci}
D_{\mathrm{I_2}(p)} \ \subset \ S_{\mathrm{I_2}(p)},
\end{equation}
called the {\it discriminant loci}, which is parametrizing singular elliptic curves.  The defining equation $\Delta_{\mathrm{I_2}(p)}$ of the discriminant together its multiplicity (up to a constant factor) is given by   (c.f. \cite{S1}) 
\vspace{-0.1cm}
\begin{equation}
\label{eq:discriminant}
\begin{array}{rclcl}
\vspace{0.1cm}
\Delta_{\mathrm{A_2}} &= & -27g_l^2\ +\ g_{s}^3=({\sqrt{27}}g_l+g_{s}^{3/2})(-{\tiny \sqrt{27}}g_l+g_{s}^{3/2}),
\\ 
\vspace{0.1cm}
\Delta_{\mathrm{B_2}} & = & (8g_l+g_{s}^2)(-8g_l+g_{s}^2)^2,\\
\vspace{0.1cm}
\Delta_{\mathrm{G_2}} & = & (g_l+2g_{s}^3)(-g_l+2g_{s}^3)^3 .
\end{array}
\vspace{-0.2cm}
  \footnote{
The decomposition in case of type $\mathrm{A_2}$ has meaning  only when we consider the real parameter space $S_{\mathrm{I_2}(p)}^\mathbb{R}$ and it was unnecessary to fix the branch.
}
\end{equation}

%

The fiber $E_{\mathrm{I_2}(p),\underline{g}}:=\pi_{\mathrm{I_2}(p)}^{-1}(\underline{g})$
over a point $\underline{g}$  in $S_{\mathrm{I_2}(p)}$
is an affine open curve in the $(x,y)$-plane which is compactified 
to an elliptic curve by adding  a single, two or three points at infinity according as the types $\mathrm{A_2}$, $\mathrm{B_2}$ or $\mathrm{G_2}$. \footnote{
A  compactification of $E_{\mathrm{I_2}(p),\underline{g}}$ is obtained by the curve defined in $\mathbb{P}^2$ by the homogenization of the equation $F_{\mathrm{I_2}(p)}$. In case of type $\mathrm{A_2}$, it is tangent of order 3 to the infinite line at a single infinite point, in case of $\mathrm{B_2}$, it is twice tangent to the infinite line at the same infinite point (so that we need to normalize the compactified curve at the point to separate branches), and in case of type $\mathrm{G_2}$, it is intersecting transversally with the infinite line at three distinct infinite points.
 }
 Using the identification \eqref{eq:dihedral}, we denote by $[p/2]$ the number of the points at infinity for the family of the type $\mathrm{I_2}(p)$.  

Let us denote the points at infinity  by
\begin{equation}
\label{eq: atinfty}
\infty_1, \ \cdots \ , \infty_{[p/2]}
\end{equation}
(where exact labeling is fixed in  
\S3), the compactified curve by 
 \begin{equation}
 \label{eq:compact1}
 \overline{E}_{\mathrm{I_2}(p),\underline{g}}\ =\ E_{\mathrm{I_2}(p),\underline{g}} \cup \overset{[p/2]}{\underset{i=1}{\cup}} \{\infty_i\}   
 \text{\quad  and \quad}
  \overline{E}_{\mathrm{I_2}(p),\underline{g}}\cap X_{\mathrm{I_2}(p)} = \ E_{\mathrm{I_2}(p),\underline{g}}  \!\!\!\!
 \end{equation}
and the fiberwise compactified family by
 \begin{equation}
 \label{eq:compact2}
\overline{\pi}_{\mathrm{I_2}(p)}\  : \ \overline{X}_{\mathrm{I_2}(p)} = X_{\mathrm{I_2}(p)}  \cup \overset{[p/2]}{\underset{i=1}{\cup}} \{\infty_i \times S_{\mathrm{I_2}(p)}\}   \ \longrightarrow  \ S_{\mathrm{I_2}(p)}.
 \end{equation}
Actually,  $\overline{X}_{\mathrm{I_2}(p)}$ is smooth at the points $\infty_i$ and $\overline{\pi}_{\mathrm{I_2}(p)}$ is transversal to the divisors $\infty_i \times S_{\mathrm{I_2}(p)}$ (see Footnote 4). 
  The weighted homogeneity of the equation $F_{\mathrm{I_2}(p)}$ implies that there is the $t\in \mathbb{C}^\times$-action $(x,y,g_s,g_l)\mapsto (t^{\wt(x)}x,t^{\wt(y)}y,t^{\wt(g_s)}g_s,t^{\wt(g_l)}g_l)$ leaving the space $X_{\mathrm{I_2}(p)}$ invariant so that the \eqref{eq:family} is equivariant with the action.  We note that the action on $X_{\mathrm{I_2}(p)}$ extends to $\overline{X}_{\mathrm{I_2}(p)}$ continuously so that the divisor $\infty_i \times S_{\mathrm{I_2}(p)}$ is invariant and the morphism  \eqref{eq:compact2} is still equivariant.
 
The restriction of the family \eqref{eq:family} over the complement $S_{\mathrm{I_2}(p)}\setminus D_{\mathrm{I_2}(p)}$ of the discriminant loci (i.e.\ the  space of regular values of the family \eqref{eq:family}) gives a locally topologically trivial family of punctured elliptic curves, which also induces a topologically locally trivial  family of compact smooth elliptic curves. 

\medskip
\noindent
{\it Note.}  Compact elliptic curve has well-known abelian group structure (after choosing the origin). Then, we have the following elementary, but important fact, which we shall reformulate in  \eqref{eq:deltaperiod2} \S5.

\medskip
\noindent
{\bf Fact 1.}
{\it The difference of two infinite points $[\infty _i]$ and $[\infty_j]$ ($1\le i,j\le [p/2]$) is a torsion element of order $[p/2]$.}

\section{Real elliptic curves and  Vanishing cycles} 

We study some real geometry of the family \eqref{eq:family}. 
It provides a description of the middle (i.e.\ one dimensional) homology groups of the affine elliptic curves $E_{I_2(p), \underline{g}}$ in terms of  vanishing cycles of root systems of types $\mathrm{A_2}, \mathrm{B_2}$ and $\mathrm{\mathrm{G_2}}$.

Let us denote by $X_{\mathrm{I_2}(p)}^{\mathbb{R}}$
and 
$S_{\mathrm{I_2}(p)}^{\mathbb{R}}$  the subspaces of $X_{\mathrm{I_2}(p)}$
and 
$S_{\mathrm{I_2}(p)}$ 
consistings of the points 
  where the coordinates $(x,y)$ and $\underline{g}$ take real values,  
and call them the real total space and the real parameter space of the family \eqref{eq:family}, respectively.
   In the real parameter space, we are interested in a particular connected component of the compliment  $S_{\mathrm{I_2}(p)}^{\mathbb{R}} \setminus D_{\mathrm{I_2}(p)}$ of the real discriminant loci $D_{\mathrm{I_2}(p)}\cap S_{\mathrm{I_2}(p)}^{\mathbb{R}}$, so called,  the totally real component $\Gamma_{\mathrm{I_2}(p)}$
\footnote{
Actually, the discriminant loci $D_{\mathrm{I_2}(p)}$ of the family \eqref{eq:family} is identified with the discriminant loci in the quotient space of a vector space by the irreducible finite reflection group action of type $\mathrm{I_2}(p)$.  Then, there is the unique connected component $\Gamma$ of the complement of the real discriminant loci such that the inverse image of a point in $\Gamma$ is totally real in the original vector space of the representation \cite{S3}. Then, $\Gamma$ is homeomorphic to a chamber of type $\mathrm{I_2}(p)$ and $\partial_\pm\Gamma_{\mathrm{I_2}(p)}$ are homeomorphic to its walls.  These facts are used for a calculation of fundamental group of the compliment of the discriminant loci (cf. \S4).
}
defined by 
\begin{equation}
\label{eq:totallyreal}
\Gamma_{\mathrm{I_2}(p)} \quad : \quad   -|g_s|^{p/2}<cg_l<|g_s|^{p/2} \ \text{ and} \quad g_s>0 , 
\end{equation}
and its boundary as the union of two edges
\begin{equation}
\label{eq:boundary}
\partial_{\pm}\Gamma_{\mathrm{I_2}(p)} \quad : \quad   cg_l=\pm |g_s|^{p/2} \ \text{  and} \quad g_s>0 
\end{equation}
 and the origin $\{0\}$ (here  $c=\sqrt{27},8$ or $1/2$ according as $p=3,4$ or $6$ \vspace{0.1cm}
\eqref{eq:discriminant}).  
See Figure 1 and its following explanations.

 \begin{figure}[h]
 \center
  \hspace{-0.3cm}\includegraphics[width=11.8cm]{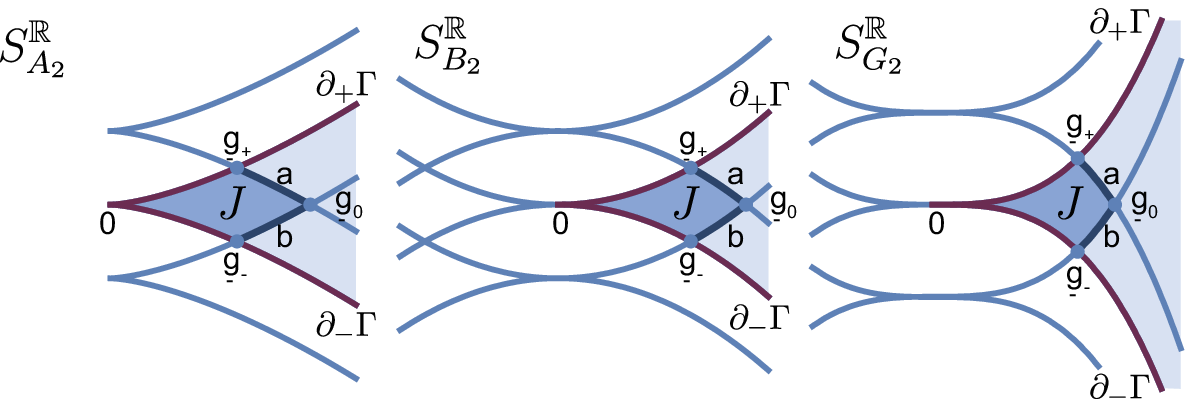}
\vspace{0.1cm}
\noindent
{\bf Figure\! 1.}Real\! discriminant\! $D_{\mathrm{I_2}(p)}^{\mathbb{R}}$,\! totally\! real\! component\! $\Gamma_{\mathrm{I_2}(p)}$\! and\! $J_{\mathrm{I_2}(p)}$
 \end{figure}

\noindent
 {\small Explanation:  The union of curves pathing through the origin $0$ is the real discriminant $D_{\mathrm{I_2}(p)}\cap S_{\mathrm{I_2}(p)}^{\mathbb{R}}$. The union of the lightly and darkly shaded areas, bounded by the real discriminant, is the totally real component $\Gamma_{\mathrm{I_2}(p)}$.  The boundary  $\partial\Gamma_{\mathrm{I_2}(p)}$ is the union of the upper boundary $\partial_+\Gamma_{\mathrm{I_2}(p)}$, lower boundary $\partial_-\Gamma_{\mathrm{I_2}(p)}$ and the origin $0$. We choose points $\underline{g}_0$ and  $\underline{g}_\pm$  generically in $\Gamma_{\mathrm{I_2}(p)}$ and in $\partial_\pm\Gamma_{\mathrm{I_2}(p)}$, respectively.  
 See \S4 for the role of the other curves, the dark shaded area, called $J$, and its boundary component $a$ and $b$.}

\medskip

 %
 We observe the following {\bf Facts 1-5.}  of vanishing cycles in the family of affine elliptic curves $E_{\mathrm{I_2}(p)}$. 
 Proof is achieved by explicit direct calculations (see Figure 2), and we omit its details.

\begin{figure}[h]
 \center
{\center
\hspace{0.1cm}\includegraphics[width=11.3cm]{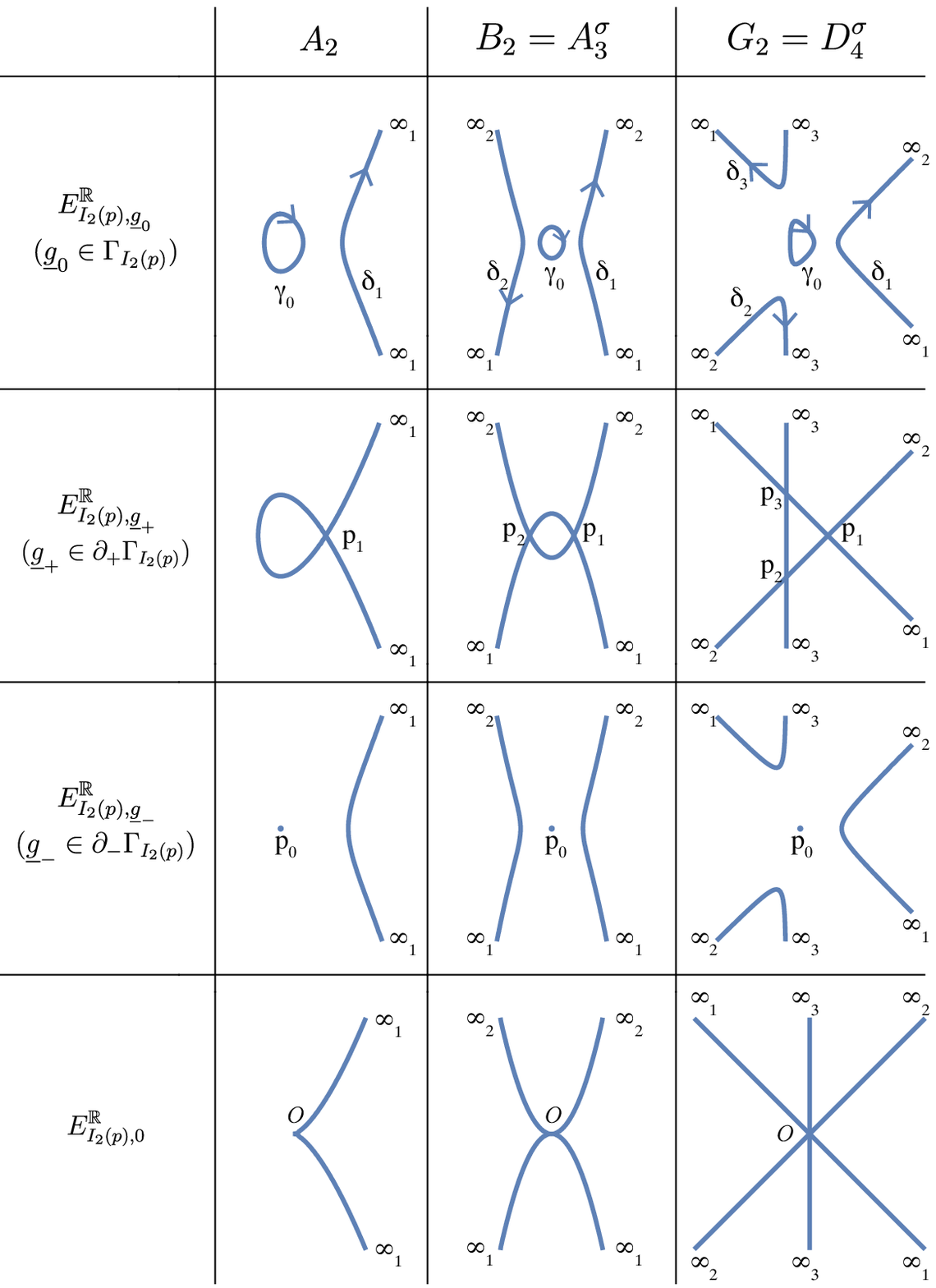}
\vspace{0.2cm}
}

\noindent
 \centerline{ {\bf Figure 2.} { Real elliptic curves of type $\mathrm{A_2}$, $\mathrm{B_2}$ and $\mathrm{G_2}$}}\\
\vspace{0.2cm}
\footnotesize{The curves in the second and the third column of the table, called of type $\mathrm{B_2}$\\
 and $\mathrm{G_2}$, are also regarded as the real affine elliptic curve of type  $A_3$ and $D_4$\\
 (see Footnote 1.) together with the action of an automorphism $\sigma_{\mathrm{I_2}(p)}$ \eqref{eq:autom}.
}
\end{figure}

\medskip
Choose a base point $\underline{g}_0\in \Gamma_{\mathrm{I_2}(p)}$ (recall Figure 1), and consider the affine  elliptic curve $E_{\mathrm{I_2}(p),\underline{g}_0}=\pi_{\mathrm{I_2}(p)}^{-1}(\underline{g})$. 

\bigskip
\noindent
{\bf Fact}  
\noindent
{\bf 1.} 
The real affine elliptic curve $E_{\mathrm{I_2}(p),\underline{g}_0}^{\mathbb{R}}:=E_{\mathrm{I_2}(p),\underline{g}_0} \cap X_{\mathrm{I_2}(p)}^{\mathbb{R}}$ (exhibited in the first row of  Figure 2.) consists of  a single compact component (=oval), which we call
\begin{equation}
\label{eq:gammazero}
\gamma_0 
\end{equation} 
and  $[p/2]$-number of non-compact connected components (arcs) $\delta_1,\cdots,\delta_{[p/2]}$. The non-compact components are bounded by the points at infinity in the compactification $ \overline{E}_{\mathrm{I_2}(p),\underline{g_0}}$. 
We fix  orientations of the arcs $\delta_i$'s and the numbering of the points at infinity such that the cyclic union:
\begin{equation}
\label{eq:realcycle}
\{\infty_1\} \cup \delta_1 \cup \{\infty_2\} \cup \delta_2 \cup \cdots \cup \delta_{[p/2]} \cup \{\infty_1\}
\end{equation}
form an oriented closed cycle (i.e.\ $\partial \delta_1=\infty_2-\infty_1,\cdots, \partial \delta_{[p/2]}=\infty_1-\infty_{[p/2]}$). 
Then, this cycle is homologous to  $\gamma_0$ after choosing an orientation of $\gamma_0$ accordingly. 
However, this condition determine only a cyclic ordering of arcs. So, there still remains an ambiguity of reversing the orientation. We  resolve this ambiguity in the following observation.

\medskip
\noindent
{\bf Fact 2.}  There exists a unique choice of an orientation and a cyclic numbering of arcs $\delta_i$'s which satisfies the initial direction condition
\begin{equation}
\label{eq:delta1}
\begin{array}{rcccc}
(\delta_1,\infty_1) & \subset & (\mathbb{R}_{>0},+\infty)\times (\mathbb{R}_{<0},-\infty) \\
\end{array}
\end{equation}
Here, $(\delta_1,\infty_1)$ in LHS means the germ of the arc $\delta_1$ at $\infty_1$, and the $(\mathbb{R}_{>0},+\infty)\times (\mathbb{R}_{<0},-\infty)$ in RHS means a neighborhood of $(+\infty,-\infty)$ in $\mathbb{R}_{>0}\times \mathbb{R}_{<0} \subset \mathbb{C}\times\mathbb{C}$.\footnote
{A priori,  there exists neither a guarantee that there exists such pair $(\delta_i,\infty_i)$ satisfying the condition \eqref{eq:delta1}, nor a guarantee that the condition \eqref{eq:delta1} choose the pair $(\delta_i,\infty_i)$ uniquely.  Therefore, Fact 2.\ claims that the conditiion actually choose the unique one (which is easily confirmed from Figure 2). Further more,  we remark that the choice \eqref{eq:delta1} in the real blow up space of $\mathbb{P}^2(\mathbb{R})$, in case of type $\mathrm{B_2}$, separates two infinity points $\infty_1$ and $\infty_2$, which the $\mathbb{P}^2(\mathbb{C})$-compactification did not separate (recall Footnote 3). Indeed, we have $(\delta_2,\infty_2)  \subset  (\mathbb{R}_{<0},-\infty)\times (\mathbb{R}_{>0},+\infty) $.
}


\medskip
\noindent
{\bf Fact 3.} Let us move the  point $\underline{g}_0$ inside $\Gamma_{\mathrm{I_2}(p)}$  to a point  $\underline{g}_+$ on the upper boundary edge $\partial_+\Gamma_{\mathrm{I_2}(p)}$ (e.g.\ move along the path $a$ in Figure 1). Then, accordingly,  the cycle $\gamma_0$ and each arc  $\delta_i$ in the fiber $E_{\mathrm{I_2}(p),\underline{g}_0}$  
are getting close to each other, and, finally at $\underline{g}_+ \in \partial_+\Gamma_{\mathrm{I_2}(p)}$ on the boundary, they intersect to a node $p_i$ of the curve $E_{\mathrm{I_2}(p),\underline{g}_+}$  ($i=1,\cdots,[p/2]$) (see the second row  of Figure 2).

\medskip
\noindent
{\bf  Fact 4.}  Let us move the point $\underline{g}_0$ inside $\Gamma_{\mathrm{I_2}(p)}$  to a point  $\underline{g}_-$ on the lower boundary edge $\partial_-\Gamma_{\mathrm{I_2}(p)}$ (e.g.\ move along the path $b$ in Figure 1). Then, accordingly,  the cycle  $\gamma_0$ in the fiber $E_{\mathrm{I_2}(p),\underline{g}_0}$ pinches to a Morse singularity $p_0$ in the fiber $E_{\mathrm{I_2}(p),\underline{g}_-}$ (see the third row of Figure 2). 
This implies that the cycle $\gamma_0$ is the vanishing cycle generated by the Morse singularity $p_0$.

\medskip
\noindent
{\bf Fact 5.}  Each nodal point $p_i$ generates a vanishing cycle
\begin{equation}
\label{eq:gammaai}
\gamma_i
\end{equation}
$i=1,\cdots,[p/2]$, in the nearby elliptic curve  $E_{\mathrm{I_2}(p),\underline{g}_0}$, which intersects with  $\gamma_0$ and $\delta_i$ (recall Fact 1.) transversally. Since $\gamma_i$ lies in the complexification $E_{\mathrm{I_2}(p),\underline{g}}$, we do not exhibit it in Figure 2, but some conceptual expression of it shall be given  in the first row of Figure 3. We choose the orientation of $\gamma_i$ by the following sign condition on the intersection: 
\begin{equation}
\label{eq:intersection1}
\begin{array}{rll}
&      \langle \gamma_0,  \gamma_i \rangle \ =\ \langle \delta_i,  \gamma_i \rangle \ =\   1   & \qquad (i=1,\cdots,[p/2]) \\
\qquad &     \langle \gamma_i,  \gamma_j \rangle \ =\ \langle \gamma_j,  \gamma_i \rangle \ =\   0  & \qquad (i,j=1,\cdots,[p/2]).
\end{array}
\end{equation}   
Here we denote  by $\langle\gamma,\gamma'\rangle$ the intersection number of paths $\gamma$ and $\gamma'$ whose sign is fixed as follows:
{\small The orientation of  $E_{\mathrm{I_2}(p),\underline{g}_0}$ as a real surface is fixed by its complex structure. If a 
path $\gamma'$ crosses another path $\gamma$  {\it counter-clockwisely}, \footnote{
 A  path in the complex-plane crosses other path {\it counter-clockwisely},  if and only if their tangent vectors $a$ and $b$ at  the crossing point satisfies $\Im(a/b)>0$.
}
then the local intersection number is $\langle\gamma,\gamma'\rangle=+1$. 
}
 
 \smallskip
\begin{rem} 
The most degenerated real curve $E_{\mathrm{I_2}(p),0}^{\mathbb{R}}$ is exhibited in the 4th row of Figure 2.
\end{rem}


\smallskip
As a consequence of Facts 1-5., we obtain the following description of the homology group of the affine elliptic curve $E_{\mathrm{I_2}(p),\underline{g}_0}$. 

\medskip
\noindent
{\bf  Fact 6.}  
{\it The classes of $\gamma_0$ and  $\gamma_i$ ($i=1,..,[p/2]$) in the first homology group of $E_{\mathrm{I_2}(p),\underline{g}_0}$ form  free basis  (see the first row of Figure 3). We denote the classes by the same notation $\gamma_i$ since we shall use notation $[\gamma_i]$ for another meaning below.
\vspace{-0.2cm}
\begin{equation}
\label{eq:homology}
L \ := \ \mathrm{H}_1(E_{\mathrm{I_2}(p),\underline{g}_0},\mathbb{Z})\ = \ \mathbb{Z} \gamma_0 \ \oplus \ \bigoplus_{i=1}^{[p/2]} \ \mathbb{Z}\gamma_i.
\end{equation}
The open embedding $E_{\mathrm{I_2}(p),\underline{g}_0} \subset \overline{E}_{\mathrm{I_2}(p),\underline{g}_0}$ induces a surjective homomorphism 
\begin{equation}
\label{eq:compacthomology}
L \ \longrightarrow  \ \mathrm{H}_1(\overline{E}_{\mathrm{I_2}(p),\underline{g}_0},\mathbb{Z})
\end{equation}
whose kernel is the radical 
$$
\mathrm{rad}(L) \: = \{ \gamma\in L \mid \langle \gamma,\delta \rangle =0 ,\ \forall \delta \in L\}
$$
of the lattice $L$ which is additively generated by  homologous relations 
$$
\gamma_1\sim \cdots\sim \gamma_{[p/2]}.
$$
in the compactification $\overline{E}_{\mathrm{I_2}(p),\underline{g}}$.\ 
Hence, $L/\mathrm{rad}(L)$\! is a rank 2 free abelian group generated by the equivalence classes $[\gamma_0]$ and $[\gamma_1]=\cdots=[\gamma_{[p/2]}]$. 
\begin{equation}
\label{eq:compacthomology2}
L/\mathrm{rad}(L) \ = \ \mathbb{Z} [\gamma_1] \oplus \mathbb{Z} [\gamma_0].
\end{equation}
Here, we denote by $[*]$ the equivalence class of $*$ in the quotient module.  
The map  \eqref{eq:compacthomology} preserves the intersection form, since it is the quotient morphism by the radical.
}

\bigskip 
We next consider a $\mathrm{SL}_2$-linear automorphism of the $(x,y)$-plane by
\footnote{
There is an $\mathrm{SL}_2$-linear automorphism $\sigma$ of order 1, 2 and 3   on  bigger families of type $\mathrm{A_2}, A_3$ and $D_4$ (see Footnote 1). Then, the action \eqref{eq:autom} is induced from $\sigma$ as fiber-wise action on the subfamilies of type $\mathrm{A_2}$, $\mathrm{B_2}$ and $\mathrm{G_2}$, respectively. The $\sigma$-action has not only homological implications as discussed in this section, it has another important implication on a certain cohomology class called the primitive form (see Footnote 17).
}
\begin{equation}
\label{eq:autom}
\begin{array}{rcl}
\sigma_{\mathrm{A_2}}(x,y) & := & (x,y), \\ 
\sigma_{\mathrm{B_2}}(x,y) & := & (-x,-y), \\ 
\sigma_{\mathrm{G_2}}(x,y) & := & (\frac{y-x}{2},\frac{-3x-y}{2}).
\end{array}
\end{equation}
\medskip
It fixes the equation $F_{\mathrm{I_2}(p)}$ and, hence, induces a fiber automorphism of order $[p/2]$ of  the family \eqref{eq:family}, whose action fixes  the base space $S_{\mathrm{I_2}(p)}$ point-wise.  Since $\sigma_{\mathrm{I_2}(p)}$ leaves  the real structure $X_{\mathrm{I_2}(p)}^{\mathbb{R}}$ invariant, it acts on each complex and real curve $E_{\mathrm{I_2}(p),\underline{g}}$. One checks directly that $\sigma_{\mathrm{I_2}(p)}$-action induces the cyclic permutation of the cycles $\gamma_i$, oriented arcs $\delta_i$  and the points $\infty_i$ at infinity  for $i\in \mathbb{Z}/[p/2]\mathbb{Z}$, respectively. However, the cycle $\gamma_0$ is invariant by the $\sigma_{\mathrm{I_2}(p)}$-action. 

Let us consider the sub-lattice of $\mathrm{H}_1(E_{\mathrm{I_2}(p),\underline{g}_0},\mathbb{Z})$ consisting of $\sigma_{\mathrm{I_2}(p)}$-fixed elements.
\begin{equation}
\label{eq:invariant} \qquad 
L_{\mathrm{I_2}(p)}\ :=\ L^{\sigma_{\mathrm{I_2}(p)}} \ = \ \mathrm{H}_1(E_{\mathrm{I_2}(p),\underline{g}_0},\mathbb{Z})^{\sigma_{\mathrm{I_2}(p)}} 
\end{equation}
 It is immediate to see that  $L_{\mathrm{I_2}(p)}$ is a rank 2 sub-lattice generated by 
\begin{equation}
\label{eq:alpha-beta}
\begin{array}{rcl}
\alpha:=\sum_{i=1}^{[p/2]} \gamma_i   & \text{\quad  and \quad} & \beta:=\gamma_0,
\end{array}
\end{equation}
whose intersection number is counted by \eqref{eq:intersection1} as
\begin{equation}
\label{eq:intersection2}
\langle \alpha \ ,\  \beta \rangle \ =\ -[p/2],\quad \langle \alpha \ ,\  \alpha \rangle \ =\ 0 
\quad \text{and} \quad \langle \beta \ ,\  \beta \rangle \ =\ 0.
\end{equation}
The composition of the embedding $L_{\mathrm{I_2}(p)} \subset L$ with the radical quotient map  \eqref{eq:compacthomology} is again an isometric embedding of lattices:\footnote
{Since we shall no-longer use the lattice $L$ in the  present paper, so far as there is a no-confusion, we shall regard $ L_{\mathrm{I_2}(p)}$  as a sub-lattice of $L/\mathrm{rad}(L)$ (e.g.\ \eqref{eq:basechange}). 
}
\begin{equation}
\label{eq:sublattice}
  L_{\mathrm{I_2}(p)} \ \subset\ L/\mathrm{rad}(L), \ \ \alpha\mapsto [p/2][\gamma_1], \ \beta \mapsto [\gamma_0]
\end{equation}
of finite index [p/2].

\medskip
We give below a root lattice theoretic interpretation of what we have calculated above (see also Footnotes 8, 9 and 11), which answers to the question on the naming of the family \eqref{eq:equation}, posed in \S2. 

In the following Figure 3, we exhibit: 

\medskip
 (1) The first row exhibits a conceptual description of the cycles $\gamma_0$ and $\gamma_i$ ($i=1,\cdots,[p/2]$) in the surface $E_{\mathrm{I_2}(p),\underline{g}_0}$, which is a complexification of the real curves in the first row of Figure 2. Complexfication of real curves in the second and the third row of Figure 2. can be obtained from this surface $E_{\mathrm{I_2}(p),\underline{g}_0}$ by pinching either the cycles $\gamma_i$ ($i=1,\cdots,[p/2]$) or the cycle $\gamma_0$, respectively. 
 
 (2) The second row exhibits the intersection diagram of the basis $\gamma_0$ and $\gamma_i$ ($i=1,\cdots,[p/2]$) of the homology group $L:=\mathrm{H}_1(E_{\mathrm{I_2}(p),\underline{g}_0},\mathbb{Z})$ (actually, they are known as diagrams of types $\mathrm{A_2,  A_3}$ and $\mathrm{D_4}$, respectively (note Footnote 1 and 9)).
 
 (3) The third row exhibits the folding of the intersection diagram in previous  (2) by the action of the automorphism $\sigma_{\mathrm{I_2}(p)}$ which is the intersection diagram for the invariant basis $\alpha,\beta$ of the invariant homology group \eqref{eq:invariant} (actually, they are known as diagrams of types $\mathrm{A_2},  \mathrm{B_2}$ and $\mathrm{G_2}$, respectively).

\medskip
Consequently, we observe that the $\sigma_{\mathrm{I_2}(p)}$-invariant 1-cycles of the families \eqref{eq:equation} are indexed by the lattices of types  $\mathrm{A_2},  \mathrm{B_2}$ and $\mathrm{G_2}$, respectively. This is the reason why we want to call the families according to the type of the root systems. \footnote
{To be exact, what we wrote here needs more explanations in the following sense. The intersection form $\langle \cdot,\cdot\rangle$ on the free abelian group $L:=\sum_{i=0}^{[p/2]} \mathbb{Z} \gamma_i$ is skew-symmetric so that the pair $(L,\langle \cdot,\cdot\rangle)$ is not a root lattice. In order to justify what we wrote above, we consider a pair $(L,J)$ of the abelian group $L$ with the non-symmetric Seifert form  $J$ on it defined by the Seifert matrices 
\vspace{-0.1cm}
$\begin{bmatrix}
1 \!\!&\!\! -1\\
0 \!\! &\!\! 1 
\end{bmatrix}$
,
$\begin{bmatrix}
1\!\! &\!\! -1\!\! &\!\! -1\\
0 \!\!&\!\!1 \!\!&\!\! 0\\
0 \!\!&\!\! 0 \!\!&\!\! 1
\end{bmatrix}$
or
$\begin{bmatrix}
1 \!\!&\!\! -1 \!\!&\!\! -1\!\!&\!\! -1\\
0 &1 & 0 & 0\\
0 & 0 & 1 & 0\\
0 & 0& 0 & 1
\end{bmatrix}$
(which is the table of linking numbers between the ordered basis $\gamma_i$ \eqref{eq:homology}) according to $p=3,4$ or $6$, respectively. Then, the difference $J_{\mathrm{I_2}(p)} \! - ^{t}\!J_{\mathrm{I_2}(p)}$ is a skew symmetric form on $L$ which is identified with the intersection form $\langle \cdot,\cdot\rangle$ on $\mathrm{H}_1(E_{\mathrm{I_2}(p),\underline{g}_0},\mathbb{Z})$. On the other hand, the sum $I:=J+\ ^{t}J$ is a symmetric bilinear form so that the pair $(L, I)$ is isomorphic to the root lattice of type $\mathrm{A_2}, A_3$ and $D_4$ with the simple root basis $\gamma_0,\cdots,\gamma_{[p/2]}$. The geometric automorphism \eqref{eq:autom} induces an automorphism of the lattice $(L,J)$, denoted again by $\sigma_{\mathrm{I_2}(p)}$. Then the invariant sub-lattice $L_{\mathrm{I_2}(p)}:=L^{\sigma_{\mathrm{I_2}(p)}}$ is spanned by $\alpha:=\sum_{i=1}^{[p/2]}\gamma_i$ and $\beta:=\gamma_0$ so that the Seifert form $J_{\mathrm{I_2}(p)}$ on the basis gives the matrix 
$\begin{bmatrix}
[p/2]\!\!&\!\!\!-[p/2]\\0\!&\! 1
\end{bmatrix}
$. Then 
$(L_{\mathrm{I_2}(p)}, I|_{L_{\mathrm{I_2}(p)}})$ is isomorphic to the root lattice of  type $\mathrm{A_2}, \mathrm{B_2}$ and $\mathrm{G_2}$ with the simple root basis $\alpha$ and $\beta$ according as $p=3,4$ or $6$. 

There are two 
constructions which realize the above formal justification: (1) Consider the category of matrix factorization of the singularity $F_{\mathrm{I_2}(p)}(x,y,0)$ (see \cite{K-S-T}). Since the simple singularities are self-mirror, the category is isomorphic to the category of vanishing cycles in the Milnor fiber $F_{\mathrm{I_2}(p)}(x,y,\underline{g}_0)=0$. Actually, one finds strongly exceptional collections generating the category such that their images $\gamma_i$ in the $K$-group of the category ($\simeq$ the middle homology group\! of\! the\! Milnor\! fiber)\! gives\ the\ simple\! basis\! where\! the\! Euler\! form\! $ 
\sum_k\! (\!-\!1)^k \! Ext^k(\!\gamma_i,\!\gamma_j\!)$\! is identified with the Seifert form. 
(2) Consider the equation $F_{\mathrm{I_2}(p)}(x,y,\underline{g}_0)\!+\!z^2$ in three variables $(x,y,z)$. 
Then the suspensions, denoted by $\Sigma \gamma_i$, of the basis $\gamma_i$ ($0\le i\le [p/2]$)  of the vanishing cycles in the complex affine curve $F_{\mathrm{I_2}(p)}(x,y,\underline{g}_0)=0$ form vanishing cycle basis of the second homology group of the complex affine surface $F_{\mathrm{I_2}(p)}(x,y,\underline{g}_0)+z^2=0$. Then the intersection form $-\langle \Sigma \gamma_i, \Sigma \gamma_j\rangle$ of the homology classes coincides with the symmetric form $ I$.
}
The singularity at the origin $0\in \mathbb{C}^2$ of the curve $E_{\mathrm{I_2}(p),0}$ for $\underline{g}=0$ (exhibited in the 4th row of Figure 2) together with the action of  $\sigma_{\mathrm{I_2}(p)}$ on it is called the singularity of type $\mathrm{I_2}(p)$ (i.e.\ of type $\mathrm{A_2}$, $\mathrm{B_2}$ and $\mathrm{G_2}$ according as $p=3$, $4$ and $6$) and the lattice $L_{\mathrm{I_2}(p)}$ is called the lattice of vanishing cycles for the singularity. Then, the family \eqref{eq:family} may be regarded also as the universal unfolding (by the parameter space $S_{\mathrm{I_2}(p)}$) of the singularity of type $\mathrm{I_2}(p)$.

\bigskip

\hspace{-0.4cm}
\vspace{-0.2cm}
{
\begin{tabular}{lll}
$E_{A_2,\underline{g}_0}$&$E_{B_2,\underline{g}_0}$&$E_{G_2,\underline{g}_0}$\\
\includegraphics[width=3.5cm]{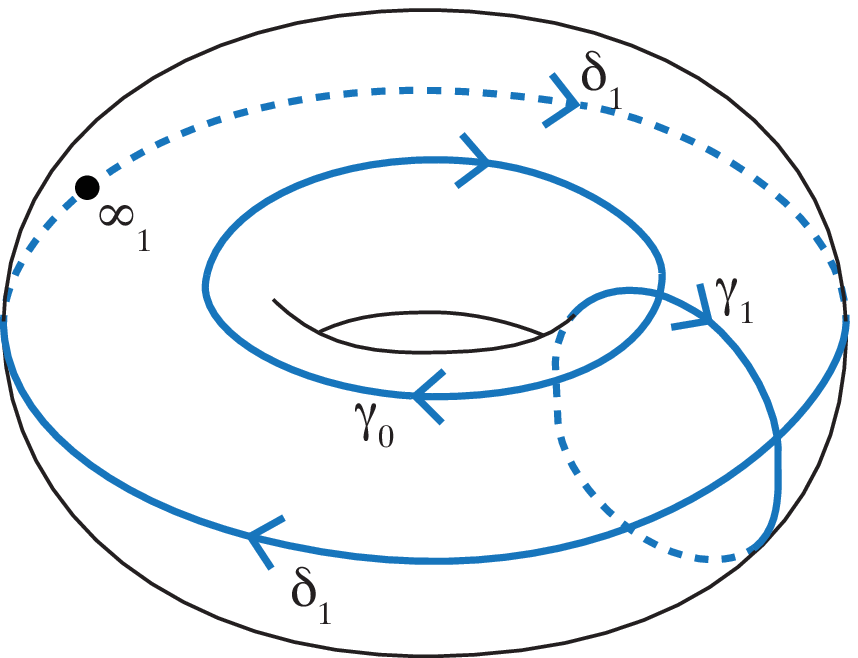}
&\hspace{2mm}\includegraphics[width=3.5cm]{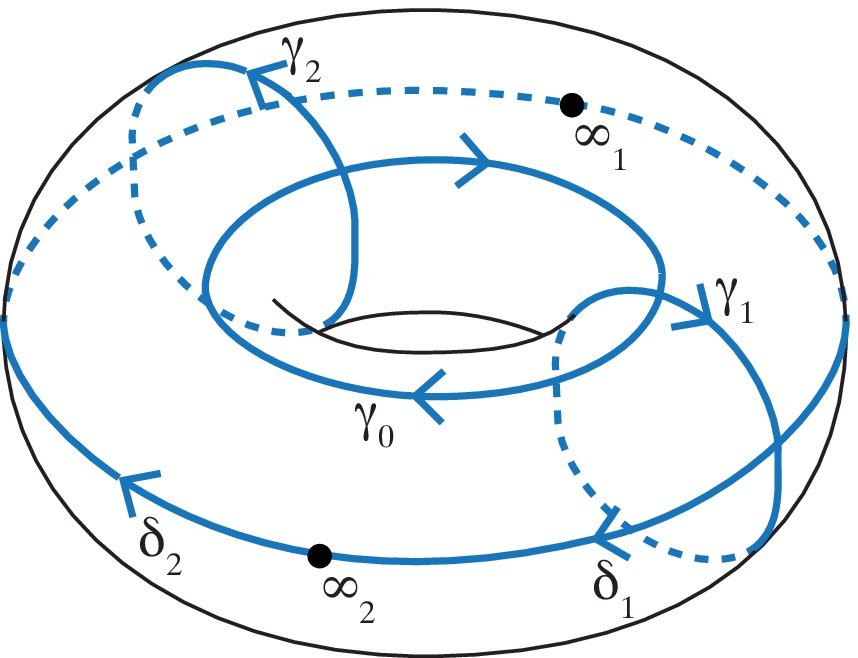}
&\hspace{2mm}\includegraphics[width=3.5cm]{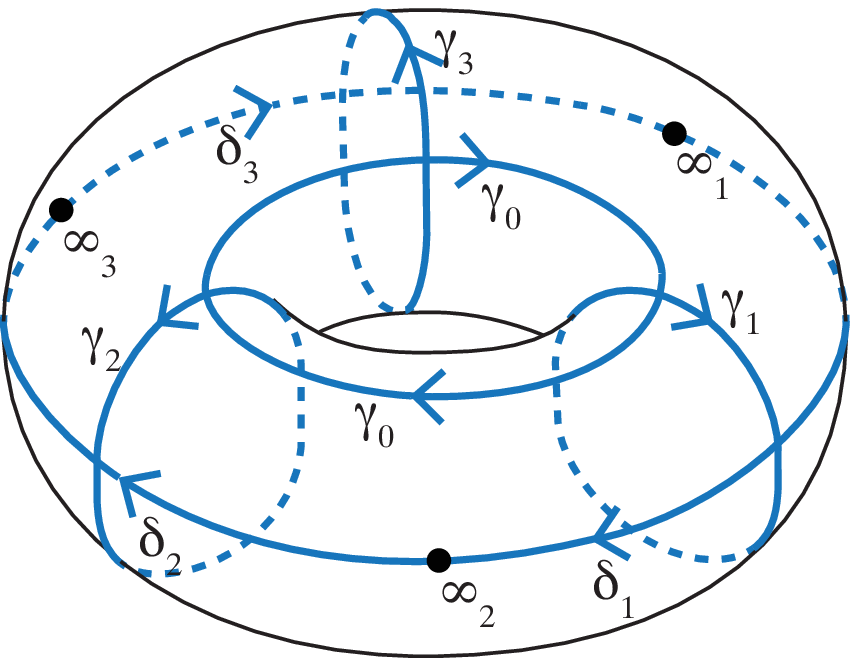}\\
&&
\end{tabular}

\setlength{\unitlength}{0.00083333in}
\begingroup\makeatletter\ifx\SetFigFont\undefined%
\gdef\SetFigFont#1#2#3#4#5{%
  \reset@font\fontsize{#1}{#2pt}%
  \fontfamily{#3}\fontseries{#4}\fontshape{#5}%
  \selectfont}%
\fi\endgroup%
\renewcommand{\dashlinestretch}{30}
\begin{picture}(5446,2200)(-100,-10)
\put(1118,1737){\ellipse{78}{78}}
\put(368,1737){\ellipse{78}{78}}
\path(406,1729)(1081,1729)
\put(1118,462){\ellipse{78}{78}}
\put(368,462){\ellipse{78}{78}}
\path(406,454)(1081,454)
\put(3068,462){\ellipse{78}{78}}
\put(2318,462){\ellipse{78}{78}}
\path(2352,431)(3035,431)
\path(2352,488)(3027,488)
\path(2659,469)(2734,544)
\path(2664,466)(2739,391)

\path(706,1204)(706,904)
\path(676.000,1024.000)(706.000,904.000)(736.000,1024.000)
\path(2656,1204)(2656,904)
\path(2626.000,1024.000)(2656.000,904.000)(2686.000,1024.000)
\path(4681,1204)(4681,904)
\path(4651.000,1024.000)(4681.000,904.000)(4711.000,1024.000)
\put(751,978){\makebox(0,0)[lb]{\smash{{{\SetFigFont{14}{16.8}{\familydefault}{\mddefault}{\updefault}/}}}}}
\put(828,978){\makebox(0,0)[lb]{\smash{{{\SetFigFont{10}{12.0}{\familydefault}{\mddefault}{\updefault}${(\mathbb Z}/1{\mathbb Z})$}}}}}
\put(2711,978){\makebox(0,0)[lb]{\smash{{{\SetFigFont{14}{16.8}{\familydefault}{\mddefault}{\updefault}/}}}}}
\put(2787,978){\makebox(0,0)[lb]{\smash{{{\SetFigFont{10}{12.0}{\familydefault}{\mddefault}{\updefault}${(\mathbb Z}/2{\mathbb Z})$}}}}}
\put(4761,978){\makebox(0,0)[lb]{\smash{{{\SetFigFont{14}{16.8}{\familydefault}{\mddefault}{\updefault}/}}}}}
\put(4831,978){\makebox(0,0)[lb]{\smash{{{\SetFigFont{10}{12.0}{\familydefault}{\mddefault}{\updefault}${(\mathbb Z}/3{\mathbb Z})$}}}}}

\put(5095,457){\ellipse{78}{78}}
\put(4345,457){\ellipse{78}{78}}
\path(4383,454)(5058,454)
\path(4373,484)(5068,484)
\path(4363,424)(5078,424)
\path(4683,449)(4758,524)
\path(4688,454)(4763,379)

\put(2243,1737){\ellipse{78}{78}}
\put(4268,1737){\ellipse{78}{78}}
\put(2993,1962){\ellipse{78}{78}}
\put(2993,1512){\ellipse{78}{78}}
\put(5018,2037){\ellipse{78}{78}}
\put(5018,1737){\ellipse{78}{78}}
\put(5018,1422){\ellipse{78}{78}}
\path(2281,1729)(2956,1954)
\path(2281,1729)(2956,1504)
\path(4306,1729)(4981,2029)
\path(4298,1724)(4973,1424)
\path(4306,1729)(4981,1729)
 
\put(3081,1504){\makebox(0,0)[lb]{\smash{{{\SetFigFont{10}{12.0}{\familydefault}{\mddefault}{\updefault}$\gamma_{ _2}$}}}}}
\put(3081,1954){\makebox(0,0)[lb]{\smash{{{\SetFigFont{10}{12.0}{\familydefault}{\mddefault}{\updefault}$\gamma_{ _1}$}}}}}
\put(5086,2029){\makebox(0,0)[lb]{\smash{{{\SetFigFont{10}{12.0}{\familydefault}{\mddefault}{\updefault}$\gamma_{ _1}$}}}}}
\put(5086,1729){\makebox(0,0)[lb]{\smash{{{\SetFigFont{10}{12.0}{\familydefault}{\mddefault}{\updefault}$\gamma_{ _2}$}}}}}
\put(331,1829){\makebox(0,0)[lb]{\smash{{{\SetFigFont{10}{12.0}{\familydefault}{\mddefault}{\updefault}$\gamma_{ _0}$}}}}}
\put(1081,1829){\makebox(0,0)[lb]{\smash{{{\SetFigFont{10}{12.0}{\familydefault}{\mddefault}{\updefault}$\gamma_{ _1}$}}}}}
\put(2206,1829){\makebox(0,0)[lb]{\smash{{{\SetFigFont{10}{12.0}{\familydefault}{\mddefault}{\updefault}$\gamma_{ _0}$}}}}}
\put(4231,1829){\makebox(0,0)[lb]{\smash{{{\SetFigFont{10}{12.0}{\familydefault}{\mddefault}{\updefault}$\gamma_{ _0}$}}}}}
\put(5081,1429){\makebox(0,0)[lb]{\smash{{{\SetFigFont{10}{12.0}{\familydefault}{\mddefault}{\updefault}$\gamma_{ _3}$}}}}}
\put(1771,425){\makebox(0,0)[lb]{\smash{{{\SetFigFont{12}{14.4}{\familydefault}{\mddefault}{\updefault}$B_2:$}}}}}
\put(3800,425){\makebox(0,0)[lb]{\smash{{{\SetFigFont{12}{14.4}{\familydefault}{\mddefault}{\updefault}$G_2:$}}}}}
\put(-100,425){\makebox(0,0)[lb]{\smash{{{\SetFigFont{12}{14.4}{\familydefault}{\mddefault}{\updefault}$A_2:$}}}}}
\put(-100,1700){\makebox(0,0)[lb]{\smash{{{\SetFigFont{12}{14.4}{\familydefault}{\mddefault}{\updefault}$A_2:$}}}}}
\put(1771,1700){\makebox(0,0)[lb]{\smash{{{\SetFigFont{12}{14.4}{\familydefault}{\mddefault}{\updefault}$A_3:$}}}}}
\put(3800,1700){\makebox(0,0)[lb]{\smash{{{\SetFigFont{12}{14.4}{\familydefault}{\mddefault}{\updefault}$D_4:$}}}}}
\put(331,600){\makebox(0,0)[lb]{\smash{{{\SetFigFont{10}{12.0}{\familydefault}{\mddefault}{\updefault}$\beta$}}}}}
\put(1081,600){\makebox(0,0)[lb]{\smash{{{\SetFigFont{10}{12.0}{\familydefault}{\mddefault}{\updefault}$\alpha$}}}}}
\put(2256,600){\makebox(0,0)[lb]{\smash{{{\SetFigFont{10}{12.0}{\familydefault}{\mddefault}{\updefault}$\beta$}}}}}
\put(3056,600){\makebox(0,0)[lb]{\smash{{{\SetFigFont{10}{12.0}{\familydefault}{\mddefault}{\updefault}$\alpha$}}}}}
\put(4281,600){\makebox(0,0)[lb]{\smash{{{\SetFigFont{10}{12.0}{\familydefault}{\mddefault}{\updefault}$\beta$}}}}}
\put(5031,600){\makebox(0,0)[lb]{\smash{{{\SetFigFont{10}{12.0}{\familydefault}{\mddefault}{\updefault}$\alpha$}}}}}

\end{picture}

}

\centerline{ {\bf Figure 3.} \ Vanishing cycles and associated  diagrams.}

\bigskip
\section{Fundamental group and monodromy representation}
We describe the fundamental group of the compliment of discriminant loci $S_{\mathrm{I_2}(p)}\setminus D_{\mathrm{I_2}(p)}$, and its monodromy representation in the first homology groups of $E_{\mathrm{I_2}(p),\underline{g}}$ and that of  its compactification $\overline{E}_{\mathrm{I_2}(p),\underline{g}}$.

First,  we sketch a geometric description of the fundamental group which is valid not only for types $\mathrm{A_2}, \mathrm{B_2}$ and $\mathrm{G_2}$ but also for the regular orbit space of any type finite reflection group by the authors \cite{Br},\cite{D},\cite{S3}. Since the formulation of \cite{S3} is closest to the present setting, we briefly explain it.

Consider a translation action $\tau_\varepsilon: S_{\mathrm{I_2}(p)} \to S_{\mathrm{I_2}(p)}$, $(g_s,g_l)\mapsto (g_s,g_l+\varepsilon)$ for $\varepsilon \in \mathbb{C}$.\footnote
{The action $\tau$ is naturally obtained by integrating the vector field $\partial_{g_l}$, called the primitive vector field, which is, up to constant factor well-defined (see \cite{S3}).
}
If $\varepsilon \in \mathbb{R}$, then the action preserves the real space $S_{\mathrm{I_2}(p)}^{\mathbb{R}}$. In Figure 1.,  we consider two shifted discriminant loci $\tau_{ \varepsilon}(D_{\mathrm{I_2}(p)})$ and $\tau_{ -\varepsilon}(D_{\mathrm{I_2}(p)})$ for some $\varepsilon\in \mathbb{R}_{>0}$. 
Then, 
the darkly shaded area $J$ in $\Gamma_{\mathrm{I_2}(p)}\subset S_{\mathrm{I_2}(p)}^{\mathbb{R}}$ exhibit the component  (which is homeomorphic to a rhombus) in $\Gamma_{\mathrm{I_2}(p)}$ cut by the shifted discriminant loci $\tau_{ \varepsilon}(D_{\mathrm{I_2}(p)})$ and $\tau_{ -\varepsilon}(D_{\mathrm{I_2}(p)})$, where its boundary edges $a$ and $b$ are segments on $\tau_{ \varepsilon}(D_{\mathrm{I_2}(p)})$ and $\tau_{ -\varepsilon}(D_{\mathrm{I_2}(p)})$, respectively.

Let $\tilde a$ and $\tilde b$ be the path in $S_{\mathrm{I_2}(p)}\setminus D_{\mathrm{I_2}(p)}$ defined as follows.
We choose the base point $\underline{g}_0\in \Gamma_{\mathrm{I_2}(p)}$ at the intersecting point of $a$ and $b$  as in Figure 1.

Let $a_{\mathbb{C}}$ and $b_\mathbb{C}$ be complexfication  of the real segments $a$ and $b$ embedded in the complex shifted discriminants $\tau_\varepsilon D_{\mathrm{I_2}(p)}$ and $\tau_{-\varepsilon} D_{\mathrm{I_2}(p)}$, respectively, such that $a_{\mathbb{C}}\cap D_{\mathrm{I_2}(p)}=\{\underline{g}_+\}$ and 
$b_{\mathbb{C}}\cap D_{\mathrm{I_2}(p)}=\{\underline{g}_-\}$. The following Figure 4.\ shows how to choose paths $\tilde a$ and $\tilde b$ in the complexification  $a_{\mathbb{C}}$ and in $b_{\mathbb{C}}$, and then embed them in $S_{\mathrm{I_2}(p)}\setminus D_{\mathrm{I_2}(p)}$.

\newpage
 \begin{figure}[h]
 \center
  \hspace{-0.3cm}\includegraphics[width=11.8cm]{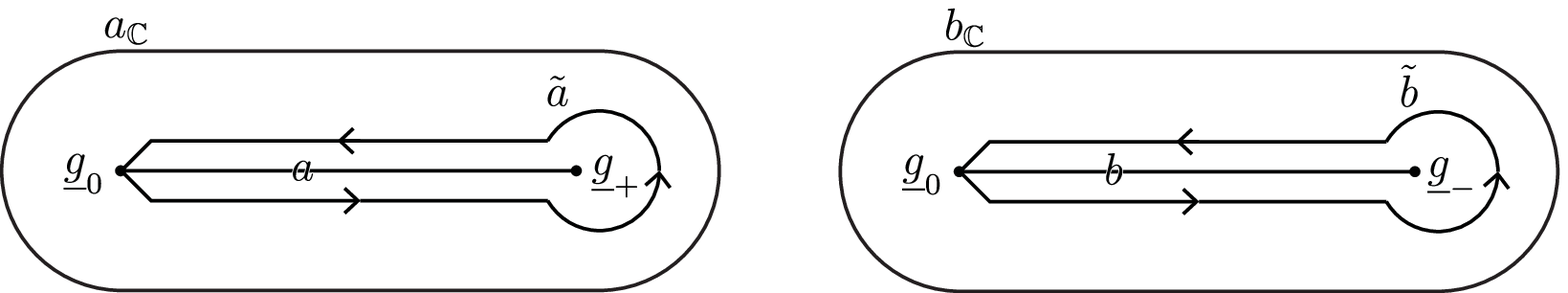}
  \vspace{0.2cm}
 \end{figure}
\centerline{ {\bf Figure 4.} {Homotopy classes $\tilde a$ and $\tilde b$}}

\bigskip
\bigskip
\noindent
{\bf Fact 7.}  {\it The fundamental group $\pi_1(S_{\mathrm{I_2}(p)}\setminus D_{\mathrm{I_2}(p)}, \underline{g}_0) =: G_{\mathrm{I_2}(p)}$ is generated by the homotopy classes of $\tilde a$ and $\tilde b$ and determined by the relation 
\begin{equation}
\label{eq:relation}
\tilde{a}\tilde{b}\cdots \ =\ \tilde{b}\tilde{a}\cdots 
\end{equation}
where the both hand sides are words of alternating sequences of $\tilde a,\tilde b$ of length $p$, which start either with $\tilde a$ or with $\tilde b$.
}

\medskip
\noindent
{\it Sketch of proof.} 1. A direct proof is given by Zariski-van Kampen method \cite{Z} w.r.t.\ the pencils \{$g_s\!=$const.\}, which intersects with discriminant loci by two points, giving two generators of the fundamental group easily identified  with $\tilde a, \tilde b$ (\cite{S3}\S4.3) and gives the relation \eqref{eq:relation} by turning $g_s=e^{i\theta}$ ($0\le \theta\le 2\pi$).

Let us sketch another approach in \cite{Br},\cite{D},\cite{S3}, where we regard the discriminant loci $D_{\mathrm{I_2}(p)}$ as the discriminant loci for the orbit space for the dihedral group action on a two dimensional vector space. Namely, let $V_{\mathbb{R}}$ be the real 2-space on which the dihedral group $W(\mathrm{I_2}(p))$ acts as the reflection group. We suitably (up to constant) identify the invariant ring $S(V_{\mathbb{R}}^*)^{W(\mathrm{I_2}(p))}$ with the polynomial ring generated by two variables $g_s$ and $g_l$ (recall \eqref{eq:equation}). So, we do $W(\mathrm{I_2}(p)\diagdown V_{\mathbb{C}}$ with $S_{\mathrm{I_2}(p)}$ and 
$(W(\mathrm{I_2}(p))\diagdown V_{\mathbb{C}})_{reg}$ $=W(\mathrm{I_2}(p))\diagdown (V_{\mathbb{C}}\setminus \cup H_\al)$ (=regular orbit space) with $S_{\mathrm{I_2}(p)}\setminus D_{I_2}$ (where $\cup H_\al$ is the union of complexified reflection hyperplanes). By this identification, the real region $\Gamma_{\mathrm{I_2}(p)}$ \eqref{eq:totallyreal} is homeomorphic to any chamber in $V_{\mathbb{R}}\setminus \cup H_\al$. Then, the inverse image in $V_{\mathbb{R}}$ of the closure of $*$ is a curved polygon which is dual to the chamber decomposition, namely, hexagon, octagon and dodecagon according as $\mathrm{I_2}(p)$ is of type $\mathrm{A_2}$, $\mathrm{B_2}$ and $\mathrm{G_2}$. The boundary of the polygon is given by $(\dot{a}\dot{b})^{p}$ (see \cite{S3} Fig.). 
Here, each $\dot{a}$ (resp.\ $\dot{b}$) is a double cover of the closed interval $\overline{a}$ (resp.\ $\overline{b}$) which is crossing the reflection hyperplane at its central point. We now consider the inverse image in $V_{\mathbb{C}}\setminus \cup H_\al$ of the union $\tilde{a}\cup\tilde{b}$. Actually, the inverse image is free homotpic to the boundary to the curved dual polygon shifted by $\sqrt{-1}\delta$ for some $\delta\in V_{\mathbb{R}}\setminus \cup H_\al$ (corresponding to breadth of $\tilde a$ and $\tilde b$, Figure 5.), where each component of the inverse image of $\tilde{a}$ (resp.\ $\tilde b$) is homotopic to a shifted edge {\small $\dot{a}+\sqrt{-1}\delta$} (resp.\  {\small $\dot{b}+\sqrt{-1}\delta$)}.  On the other hand, the shifted real vector space $V_{\mathbb{R}}+\sqrt{-1}\delta$ does not  intersect with any reflection hyperplane so that the shifted dual polygon is contractible in $V_{\mathbb{C}}\setminus \cup H_\al$. This creates the homotopy relation \eqref{eq:relation}. That there is no-more relations follows from a dimension argument, which we omit here.   \qquad $\Box$

The group $G_{\mathrm{I_2}(p)}$ with the presentation in Fact 7 is called the Artin group of type $\mathrm{I_2}(p)$ (\cite{B-S}). The element of the expression \eqref{eq:relation} is the least common multiple of the generators  $\tilde{a}$ and $\tilde{b}$ inside the positive monoid of the group and is called the fundamental element (\cite{G},\cite{B-S}), denoted by \footnote
{Here, we have an unfortunate coincidence of notation of the fundamental element  with that of the discriminant \eqref{eq:discriminant}. Since we use them in different places,  there shall be no confusions.
}
$$
\Delta \ = \ \Delta_{\mathrm{I_2}(p)}.
$$
 It is well-known (e.g.\ \cite{B-S}) that the center of the group $G_{\mathrm{I_2}(p)}$ is an infinite cyclic group generated by $\Delta$ (types $\mathrm{B_2}$ and $\mathrm{G_2}$) or by $\Delta^2$ (type $\mathrm{A_2}$).

\medskip
The next task is to determine the monodromy actions of the fundamental group $G_{\mathrm{I_2}(p)}$ on the lattices $L,\ L_{\mathrm{I_2}(p)}$ and $L/\mathrm{rad}(L)$.  Recall Fact 2, 3, 4 and 5 that the ``degeneration" of the curve $E_{\mathrm{I_2}(p),\underline{g}_0}$ moving the parameter $\underline{g}_0$ along the path $a$ (resp.\ $b$) pinches the cycles $\gamma_i$ {($i=1,\cdots,[p/2]$)}
 (resp.\ cycle $\gamma_0$) in $E_{\mathrm{I_2}(p),\underline{g}_0}$ to points $p_i$ (resp.\ $p_0$).  Then the actions $\rho=\rho_{\mathrm{I_2}(p)}$ of $\tilde a$ (resp.\ $\tilde b$) on the lattice $L=\mathrm{H}_1(E_{\mathrm{I_2}(p),\underline{g}_0},\mathbb{Z})$ is determined by Picard-Lefschetz formula (i.e. the transvections) of the vanishing cycles $\gamma_1,\cdots,\gamma_{[p/2]}$ (resp.\ $\gamma_0$), taking the intersection number \eqref{eq:intersection1} in account. That is,
$$
\rho(\tilde{a})(\gamma_j)=
\begin{cases}
\gamma_0 -\sum_{i=1}^{[p/2]}\gamma_i \text{\quad if } j=0\\
\gamma_j   \text{\qquad \qquad \ if } j=1,2,3
\end{cases}
\rho(\tilde{b})(\gamma_j)=
\begin{cases}
\gamma_0 \text{\qquad \qquad \ if } j=0\\
\gamma_j  +\gamma_0 \text{\ if } j=1,2,3
\end{cases}
$$
Then, the other actions on $ L_{\mathrm{I_2}(p)}$ and $L/\mathrm{rad}(L)$, denoted by the same $\rho$, are induced from this action  either by restriction to the sub-lattice or by  the quotient out the radical of the  lattice. 
In particular, the embedding \eqref{eq:sublattice} is equivariant with the monodromy action.
Explicit formulae are given as follows.

\medskip
\noindent
{\bf Fact 8.} 1)  {\it The action of $G_{\mathrm{I_2}(p)}$ on 
$L_{\mathrm{I_2}(p)}=\mathrm{H}_1(E_{\mathrm{I_2}(p),\underline{g}_0},\mathbb{Z})^{\sigma_{\mathrm{I_2}(p)}}$ is given as follows.} 
{\small
\begin{equation}
\label{eq:monodromy1}
\rho(\tilde{a} )
(\alpha,\beta)
=
(\alpha,\beta)
\begin{bmatrix}
1 & -1 \\
0 & 1
\end{bmatrix}
 \text{\quad and \quad}  
 \rho(\tilde{b}) (\alpha,\beta)
 =
 (\alpha,\beta) 
 \begin{bmatrix}
 1 & 0\\
 [p/2] & 1
 \end{bmatrix}.
\end{equation}
}

 2)  {\it The action of $G_{\mathrm{I_2}(p)}$ on 
 $L/\mathrm{rad}(L)=\mathrm{H}_1(\overline{E}_{\mathrm{I_2}(p),\underline{g}_0},\mathbb{Z})$ is given as follows. }
{\small
\begin{equation}
\label{eq:monodromy2}
\rho(\tilde{a} )
([\gamma_0],[\gamma_1])
\! = \!
([\gamma_0],[\gamma_1])
\begin{bmatrix}
1 \!\! & \! 0 \\
-[p/2] \!\! & \! 1
\end{bmatrix}
 \text{\ and \ }  
 \rho(\tilde{b}) ([\gamma_0],[\gamma_1])
\! = \!
 ([\gamma_0],[\gamma_1]) 
 \begin{bmatrix}
 1 \!\! & \! 1\\
 0 \!\! & \! 1
 \end{bmatrix}.\! \!\!\!
\end{equation}
}

\medskip
\noindent
Here, the representations \eqref{eq:monodromy1} and \eqref{eq:monodromy2} are conjugate by the basis change:
 \begin{equation}
 \label{eq:basechange}
 (\alpha,\beta)=([\gamma_0],[\gamma_1])
 \begin{bmatrix}
 0 & 1\\
 [p/2] & 0
 \end{bmatrix}
 \end{equation}
 \eqref{eq:sublattice}, and, hence, they are equivalent.\footnote
 {In the sequel,  we shall treat these two equivalent representations in parallel (e.g.\  \eqref{eq: localsystem}), which looks a bit redundant and cumbersome. This subtlety was caused since we, later on, want to study automorphic forms for congruence subgroups (see \eqref{eq:congruence1} and \S6). More precisely, from a view point for the period map of a primitive form, it is natural to consider the lattice $L_{\mathrm{I_2}(p)}$ of vanishing cycles. On the other hand, from a view point of classical elliptic integrals for a compact elliptic curve and to connect with classical elliptic modular function theory, it is natural to consider the lattice 
 $L/\mathrm{rad}(L)$.  
 Therefore, in the sequel, when we talk about the matrix expression of the representation $\rho$, we shall mean those matrices with respect to the basis $[\gamma_0]$ and $[\gamma_1]$ but not the basis $\alpha$ and $\beta$. 
 These cautious treatments are necessary for the cases of types  $\mathrm{B_2}$ and $\mathrm{G_2}$ but not for $\mathrm{A_2}$, since the embedding  \eqref{eq:sublattice} is already isomorphic.
 }
 We determine the image and the kernel of the representations as follows.
 
\medskip
\noindent
{\bf Fact 9.}  
{\it Let us identify $L/\mathrm{rad}(L)$ with $\mathbb{Z}^2$ by the use of the basis $[\gamma_0]$ and $[\gamma_1]$, and regard the representation \eqref{eq:monodromy2} as  a homomorphism $\rho : G_{\mathrm{I_2}(p)}\to \mathrm{SL}_2(\mathbb{Z})$.  Then, we have

1. The fundamental element $\Delta$ is represented by $\rho$ as follows.
\begin{equation}
\label{eq:fundamental}
\rho_{\mathrm{A_2}}(\Delta)=
\begin{bmatrix}
 0 & 1\\
 -1 & 0
 \end{bmatrix}  ,\quad
\rho_{\mathrm{B_2}}(\Delta)=
\begin{bmatrix}
 -1 & 0  \\
 0 & -1
 \end{bmatrix}  \quad {\text and} \quad 
 \rho_{\mathrm{G_2}}(\Delta)=
\begin{bmatrix}
 1 & 0\\
 0 & 1
 \end{bmatrix}  .
\end{equation}
In particular, the images  $Im(\rho_{\mathrm{A_2}})$ and $Im(\rho_{\mathrm{B_2}})$ contain $-id$, but $Im(\rho_{\mathrm{G_2}})$ does not.

2. The image of the representation $\rho$ in $\rm{SL}_2(\mathbb{Z})$  is equal to the subgroup 
\begin{equation}
\label{eq:congruence1}
\Gamma_1([p/2]) :=
\Big\{ 
\begin{bmatrix}
 a & b\\
 c & d
 \end{bmatrix} 
 \in \mathrm{SL}_2(\mathbb{Z}) \mid
\begin{bmatrix}
 a & b\\
 c & d
 \end{bmatrix} 
  \equiv 
  \begin{bmatrix}
 1 & b\\
 0 & 1
 \end{bmatrix} 
  \bmod [p/2] \}\Big\} ,
  \end{equation}
 called a congruence subgroup of $\mathrm{SL}_2(\mathbb{Z})$ of level $[p/2]$ (see e.g.\ \cite{Ko}):
  
3. The kernel of the representation $\rho$ is an infinite cyclic group generated by
\begin{equation}
\label{eq:center}
(\tilde{a}\tilde{b})^{k(\mathrm{I_2}(p))}
\end{equation}
 where $k(\mathrm{I_2}(p))\in \mathbb{Z}_{\ge2}$ is a number  attached to 
any Coxeter system (see \cite{S5} \S6, 6.1 ii)) such that  
\begin{equation}
\label{eq:k}
k(\mathrm{A_2})=6, \quad k(\mathrm{B_2})=4  \ \text{ and } \  k(\mathrm{G_2})=3. 
\end{equation}
\footnote{
The  numbers  $k(W)$ for any finite Weyl group $W$ is defined in \cite{S5} 6.1 depending  on Coxeter diagram of $W$. }

4. Thus, we have the short exact sequence
\begin{equation}
\label{eq:congruence2} 
1 \ \rightarrow \ \mathbb{Z} \ \rightarrow \ G_{\mathrm{I_2}(p)} \ \rightarrow \ \Gamma_1([p/2]) \ \rightarrow \ 1.
\end{equation}
That is, the Artin group $G_{\mathrm{I_2}(p)}$ is a central extension of an elliptic congruence modular group $\Gamma_1([p/2])$.
}

\begin{proof}
1. This is a direct calculation for the cases $[p/2] =1,2$ and $3$. The fact that $Im(\rho_{\mathrm{A_2}})$
does not contain $-id$ is a consequence of ii).

2. Let us show that $\rho$ is surjective to  $\Gamma_1([p/2])$. Set $A:=\rho(\tilde a)$ and $B:=\rho(\tilde b)$ in \eqref{eq:monodromy2}. Clearly $A,B\in \Gamma_1([p/2])$. We show that $\Gamma_1([p/2])$ is generated by $A$ and $B$. Consider an element $C\in \Gamma_1([p/2])$ whose $(2,1)$ (resp.\ (2,2)) entry is $c$ (resp.\ $d$). By definition, we set $c=\overline{c}[p/2]$ for some $\overline{c}\in \mathbb{Z}$. We also know by definition that $d\not=0$. Then the (2,1) entry (resp.\  (2,2) entry) of  $CA^k$ ($k\in\mathbb{Z}$) is equal to $c+kd[p/2]=(\overline{c}+kd)[p/2]$ (resp.\ is unchanged $d$). Then, by the Euclidean division algorithm, we can choose $k\in \mathbb{Z}$ such that $|\overline{c}-kd|\le |d|/2$. Next, let us consider $C\in  \Gamma_1([p/2])$ such that its (2,1) entry $c=\overline{c}[p/2]$ satisfies the condition $|\overline c |\le |d|/2$. If $c=0$ then the diagonal of $C$ is $\pm(1,1)$ where $(-1,-1) $ cannot occur for the case $[p/2]=3$ by the definition of $\Gamma_1(3)$. By multiplying $-id$ if necessary for the cases $[p/2]=1$ or $2$ (recall the result in 1.), we see that $C$ is already of the form $B^l$ for some $l\in \mathbb{Z}$. Suppose $c\not=0$. We consider $CB^l$ for $l\in\mathbb{Z}$. Then, its (2,1)-entry is unchanged $c$, but the (2,2) entry is given by $d':=d-lc$. Then  after a suitable choice of $l\in\mathbb{Z}$, we have $|d-lc|\le |c|/2$.  Combining the above two procedures, the (2,2) entry $d'$ of the matrix $CA^kB^l$ satisfies $|d'|\le |c|/2=|\overline{c}[p/2]|/2\le |d|[p/2]/4$. Since $[p/2]/4<1$ in our case, this means $|d'|<|d|$. That is, if $C$ is note generated by $A$ and $B$, there are integers $k$ and $l$ such that the (2,2) entry of 
$CA^kB^l$ has the absolute value strictly smaller than that of $C$. This give an induction proof of $ \Gamma_1([p/2])=\langle A,B\rangle$.

3.  As a result of 1., we observe
$\rho_{\mathrm{A_2}}(\Delta^4)$, $\rho_{\mathrm{G_2}}(\Delta^2)$ and $\rho_{\mathrm{G_2}}(\Delta)$ are identity matrices.  That is, $\Delta_{\mathrm{A_2}}^4$, $\Delta_{\mathrm{B_2}}^2$ and $\Delta_{\mathrm{G_2}}$ belong to the kernel, 
where we have relations $\Delta_{\mathrm{A_2}}^4 \! = \! (\tilde{a}\tilde{b})^{6}$, $\Delta_{\mathrm{B_2}}^2 \! = \! (\tilde{a}\tilde{b})^{4}$ and  $\Delta_{\mathrm{G_2}} \! = \! (\tilde{a}\tilde{b})^{3}$. Thus the element \eqref{eq:center} is contained in the $\ker \rho$.
Conversely, let us show that $\ker(\rho_{\mathrm{I_2}(p)}) \! \subset$center of $G_{\mathrm{I_2}(p)}$.  Then, we explicitly calculate $\Delta_{\mathrm{A_2}}^4$, $\Delta_{\mathrm{B_2}}^2$ and  $\Delta_{\mathrm{G_2}}$ generate the kernel. To show this, we use the fundamental domain of $\Gamma_1([p/2])$ (see \cite{S5} \S6 Assertion 5, 6.).

4.  This is only the rewriting of the results 2. and 3. 
\end{proof}
 
\begin{remark}
The following characterization of the congruence subgruop is well-known.
$$
\begin{array}{rl}
\!\!\! \Gamma_1([p/2]) \!\!\! & \!\! = \! \{ m\in  \mathrm{SL}_2(\mathbb{Z}) \! \mid   m \text{ preserves the subsets  $L /\rad(L) \! + \! \frac{1}{[p/2]} [\gamma_0]$}\}\\
 = & \!\! \{ m\in  \mathrm{SL}_2(\mathbb{Z}) \! \mid   m \text{ preserves the subsets $L_{\mathrm{I_2}(p)}$ and $L_{\mathrm{I_2}(p)} \! + \! [\gamma_1]$}\}\\
= &\!\!\! \{ \! m \! \in \!  \mathrm{SL}(L_{\mathrm{I_2}(p)})\! \mid \!  m \text{ extends to  $\mathrm{SL}_2(\mathbb{Z})$\! and preserves\! $L_{\mathrm{I_2}(p)}\! +\! [\gamma_1]$}\!\}.
\end{array}
$$
{\it A sketch of proof.} Reflections by the roots $\alpha$ and $\beta$, satisfy the conditions. Conversely, the conditions on $m =
\begin{bmatrix}
 a \!&\! b\\
 c \!&\! d
 \end{bmatrix} 
\in  \mathrm{SL}_2(\mathbb{Z})$ that it preserves $L$ (resp.\  $L_{\mathrm{I_2}(p)} + [\gamma_1]$) implies $c\equiv 0 \bmod [p/2]$ (resp.\ $d\equiv 1 \bmod [p/2]$).  $\Box$ 
\end{remark}

As a consequence of the description of the congruence subgroup $\Gamma_1([p/2])$ in Fact 9, we  introduce a character (see \cite{S5} \S6 (6.1.6)) on it, which shall be used to formulate the discriminant conjecture in \S11: 
\begin{equation}
\label{eq:character}
\vartheta_{\mathrm{I_2}(p)}:  \Gamma_1([p/2]) \to  \mathbb{C}^\times, \quad \tilde{a}, \tilde{b} \mapsto \exp{\Big(\frac{\pi\sqrt{-1}}{k(\mathrm{I_2}(p))}\Big)} .
\end{equation}
{\it Proof.} Obviously, the relation \eqref{eq:relation} is satisfied by the images of $\vartheta_{\mathrm{I_2}(p)}$. {\small On the other hand, the $\vartheta_{\mathrm{I_2}(p)}$-image of  \eqref{eq:center} is equal to  $\exp{\!(2\pi\sqrt{\!-1})}\!=\!1$}. $\Box$ 

\medskip
Note that the power 
\begin{equation}
\label{eq:anti-invariant}
\theta_{\mathrm{I_2}(p)}:=\vartheta_{\mathrm{I_2}(p)}^{k(\mathrm{I_2}(p))}:
 \Gamma_1([p/2]) \to \{\pm1\}, \ \ \tilde{a},\tilde{b} \mapsto -1
\end{equation}
 defines also a character for the anti-invariants, and $\vartheta_{\mathrm{I_2}(p)}^{2k(\mathrm{I_2}(p))}$ is  trivial.
Actually except for the type $\mathrm{G_2}$, $\theta_{\mathrm{I_2}(p)}$ factor through the sign morphism $W_{\mathrm{I_2}(p)} \to \{\pm1\}$ of the Weyl group associated with ${\mathrm{I_2}(p)}$.

\bigskip

For a use to describe the period map in the next \S5, we prepare some notations. 
It is well-known that monodromy data given in {\bf Fact 9.} is equivalent to data of the local systems of the homology groups $\mathrm{H}_1(E_{\mathrm{I_2}(p),\underline{g}},\mathbb{Z})^{\sigma_{\mathrm{I_2}(p)}}$  
and 
$\mathrm{H}_1(\overline{E}_{\mathrm{I_2}(p),\underline{g}},\mathbb{Z})$  
over $\underline{g} \in S_{\mathrm{I_2}(p)}\setminus D_{\mathrm{I_2}(p)}$. According to the two local systems, we consider 
\begin{equation}\begin{array}{ccl}
\label{eq: localsystem}
\tilde{L}_{\mathrm{I_2}(p)}  &\!\! := \! & \text{the lifting of the local system 
$\mathrm{H}_1(E_{\mathrm{I_2}(p),\underline{g}},\mathbb{Z})^{\sigma_{\mathrm{I_2}(p)}}$
to }   \widetilde{S}_{\mathrm{I_2}(p)}\!\! \\
\tilde{L}/\mathrm{rad}(\tilde{L}) \!&\!\! := \! & \text{the lifting of the local system 
$\mathrm{H}_1(\overline{E}_{\mathrm{I_2}(p),\underline{g}},\mathbb{Z})$
to }   \widetilde{S}_{\mathrm{I_2}(p)}.
\end{array}
\end{equation}
Here, $\widetilde{S}_{\mathrm{I_2}(p)}$ is the monodromy $\rho$ covering space of $S_{\mathrm{I_2}(p)}\setminus D_{\mathrm{I_2}(p)}$ defined by 
\begin{equation}
\label{eq:covering}
\widetilde{S}_{\mathrm{I_2}(p)} \ := \ \mathrm{ker}(\rho_{\mathrm{I_2}(p)}) \diagdown  \big(S_{\mathrm{I_2}(p)}\setminus D_{\mathrm{I_2}(p)}\big)^{\sim} ,
\end{equation}
where $(S_{\mathrm{I_2}(p)}\setminus D_{\mathrm{I_2}(p)})^{\sim} $ is the universal covering of $S_{\mathrm{I_2}(p)}\setminus D_{\mathrm{I_2}(p)}$,   on which the fundamental group $G_{\mathrm{I_2}(p)}$ acts from left properly and discontinuously  after choosing a copy  $\widetilde{\Gamma}_{\mathrm{I_2}(p)}$ in $(S_{\mathrm{I_2}(p)}\setminus D_{\mathrm{I_2}(p)})^{\sim} $ of  the base point loci $\Gamma_{\mathrm{I_2}(p)}\subset S_{\mathrm{I_2}(p)}\setminus D_{\mathrm{I_2}(p)}$ \eqref{eq:totallyreal} as the base point loci in  $(S_{\mathrm{I_2}(p)}\setminus D_{\mathrm{I_2}(p)})^{\sim} $. Then we take the quotient of the universal covering by the kernel of the monodromy representation.  The image of $\widetilde{\Gamma}_{\mathrm{I_2}(p)}$ in $\widetilde{S}_{\mathrm{I_2}(p)}$ is again denoted by $\widetilde{\Gamma}_{\mathrm{I_2}(p)}$ and called the base point loci of $\widetilde{S}_{\mathrm{I_2}(p)}$.

By definition, 
$\tilde{L}_{\mathrm{I_2}(p)}$ and $\tilde{L}/\mathrm{rad}(\tilde{L})$ are trivial local systems on the space $\widetilde{S}_{\mathrm{I_2}(p)}$ of homotopy type $S^1$.
\begin{equation}
\label{eq:trivial}
\tilde{L}_{\mathrm{I_2}(p)} \ \simeq \ L_{\mathrm{I_2}(p)}\times \tilde{S}_{\mathrm{I_2}(p)} \quad \text{and}\quad  
\tilde{L}/\mathrm{rad}(\tilde{L}) \ \simeq \ L/\mathrm{rad}(L) \times\  \tilde{S}_{\mathrm{I_2}(p)} 
\end{equation}
with natural inclusion relation $\tilde{L}_{\mathrm{I_2}(p)} \subset \tilde{L}/\mathrm{rad}(\tilde{L})$. For any element of $L_{\mathrm{I_2}(p)} \subset L/\mathrm{rad}(L)$, say $\gamma$, we shall denote by the 
 same $\gamma$ the global section of $\tilde{L}_{\mathrm{I_2}(p)} \subset \tilde{L}/\mathrm{rad}(\tilde{L})$ associated with it,  so far as there may be no-confusion. So, the free basis $\alpha$ and $\beta$ \eqref{eq:invariant} (resp.\ $[\gamma_1]$ and $[\gamma_0]$ (Fact 6)) are lifted to global basis $\alpha$ and $\beta$ (resp.\ $[\gamma_1]$ and $[\gamma_0]$) of $\tilde{L}_{\mathrm{I_2}(p)}$ (resp.\ $\tilde{L}/\mathrm{rad}(\tilde{L})$).
 
\medskip
Let us show in the next Fact 10 that not only those closed cycles $[\gamma_i]$ are lifted to global sections over $\widetilde{S}_{\mathrm{I_2}(p)} $ but also some ``special arcs"  $\delta_i$ on the curve $\overline{E}_{\mathrm{I_2}(p),\underline{g}}$, which originally defined only for $\underline{g}\in \Gamma_{\mathrm{I_2}(p)}$, have global sections over $\widetilde{S}_{\mathrm{I_2}(p)}$. Namely, let us first consider the pullback family 
$$
\tilde{\pi}_{\mathrm{I_2}(p)}\ : \ \tilde{X}_{\mathrm{I_2}(p)}\longrightarrow \tilde{S}_{\mathrm{I_2}(p)}
$$
of the family $\tilde{\pi}_{\mathrm{I_2}(p)}$ \eqref{eq:compact2} to $\tilde{S}_{\mathrm{I_2}(p)}$.
 It carries some additional structures: 
 
 (1) global sections $\{\infty_i \times \tilde{S}_{\mathrm{I_2(p)}}\} $ ($1\le i \le [p/2]$),
 
 (2) fiberwise automorphism $\tilde{\sigma}_{I_(p)}$ such that global sections are cyclically permutated: $\sigma_{\mathrm{I_2}(p)} (\infty_i)=\infty_{i+1}$ {\small ($i\in \mathbb{Z}/[p/2]\mathbb{Z}$)}. 
 
 \medskip
  Recall further that if the parameter $\underline{g}$ belongs to the base point locus 
 $\Gamma_{\mathrm{I_2}(p)}$, then the fiber curve $\overline{E}_{\mathrm{I_2}(p),\underline{g}}$ contains oriented arcs $\delta_i$ ($i=1,\cdots,[p/2]$) such that
\begin{equation}
\label{eq:delta}
\begin{cases}
a) \quad  \partial(\delta_i) \ = \ \infty_{i+1}-\infty_i   \quad \text{ for }  i\in \mathbb{Z}/[p/2]\mathbb{Z}, \\
b) \quad \ \sigma_{\mathrm{I_2}(p)}(\delta_i) \ = \ \delta_{i+1}    \qquad \text{ for }  i\in \mathbb{Z}/[p/2]\mathbb{Z}, \\ 
c) \quad \sum_{i\in \mathbb{Z}/[p/2]\mathbb{Z}} \delta_i \ \sim \ \gamma_0 \ \quad \text{homologous \ in  }\overline{E}_{\mathrm{I_2}(p),\underline{g}}.
\end{cases}
\end{equation}

We extend these $\delta_i$'s to global sections over $\tilde{S}_{\mathrm{I_2}(p)}$ as follows.

\medskip
\noindent
{\bf Fact 10.} 1. There exist global sections for $1\le i\le [p/2]$ 
\begin{equation}
\label{eq:deltai}
\delta_i \quad  : \quad  \underline{g} \ \in \ \tilde{S}_{\mathrm{I_2}(p)} \quad \mapsto \quad   \delta_i(\underline{g}) \ \in \  C_1(\overline{E}_{\mathrm{I_2}(p),\underline{g}},\mathbb{Z})
\end{equation}
which coincide with $\delta_i$ in {\bf Fact 1.} when $\underline{g}\in \Gamma_{\mathrm{I_2}(p)}$, and  satisfy the conditions \eqref{eq:delta}.
Here, $C_1(\overline{E}_{\mathrm{I_2}(p).\underline{g}},\mathbb{Z})$ is the module of singular 1-chains on $\overline{E}_{\mathrm{I_2}(p).\underline{g}}$.

2.    
 The sections are unique up to homologous zero. That is, if there exists other section $\delta'_i$, then $\delta_i(\underline{g})\! - \! \delta'_i(\underline{g}) \sim 0$ (homologous in $\overline{E}_{\mathrm{I_2}(p),\underline{g}}$) for all $\underline{g}\in \tilde{S}_{\mathrm{I_2}(p)}$. 

\smallskip The proof is left to the reader.

\section{Periods for Primitive forms}

We study integrals over paths and cycles in the curves $E_{\mathrm{I_2}(p),\underline{g}}$ of the family \eqref{eq:family}.  
Since we are interested in period integrals over the cycles in $\mathrm{H}_1(E_{\mathrm{I_2}(p),\underline{g}},\mathbb{Z})$ \eqref{eq:homology}, the integrant form should be holomorphic 1-form on $X_{\mathrm{I_2}(p)}$ relative to the base space $S_{\mathrm{I_2}(p)}$.\footnote{
Forms on $X_{\mathrm{I_2}(p)}$ relative to the base space $S_{\mathrm{I_2}(p)}$ means the equivalence classes of forms on $X_{\mathrm{I_2}(p)}$ modulo the $\mathcal{O}_{X_{\mathrm{I_2}(p)}}$-submodule generated by $dF_{\mathrm{I_2}(p)}, dg_s$ and $dg_l$.
}
Still, this condition is too weak to  fix an integrant  (e.g.\  the Betti number of the curves are larger than 1 so that just one choice of an integrant seems insufficient).

In  present paper, we shall focus  on the integrals only of the following form: 
\begin{equation}
\label{eq:primitiveform}
\zeta_{\mathrm{I_2}(p)}:= \Res\Big[\frac{dxdy}{F_{\mathrm{I_2}(p)}(x,y,\underline{g})}\Big]
\end{equation}
(see Footnote 16 for explicit descriptions of the residue \eqref{eq:primitiveform} \footnote{
For each fixed $\underline{g}\in S_{\mathrm{I_2}(p)}$, $dxdy/F_{\mathrm{I_2}(p)}$ may be considered as a top degree meromorphic 2-form on $(x,y)$-plane of simple pole along the curve $E_{\mathrm{I_2}(p),\underline{g}}$. Then, at the smooth point of $E_{\mathrm{I_2}(p),\underline{g}}$, the symbol \eqref{eq:primitiveform} defines the residue, which is a holomorphic one form on  $E_{\mathrm{I_2}(p),\underline{g}}$. Actually, using the $(x,y)$-coordinates, the form is explicitly given by the relative differential forms $\frac{dx}{\partial F_{\mathrm{I_2}(p)}/\partial y} \sim -\frac{dy}{\partial F_{\mathrm{I_2}(p)}/\partial x}$ (which are equivalent modulo $dF_{\mathrm{I_2}(p)}$). Using these expressions, we confirm that $\zeta_{\mathrm{I_2}(p)}$, as a 1-form on $\overline{E}_{\mathrm{I_2}(p),\underline{g}}$, is well-defined up to the singularity loci 
$\{(x,y) \!\in \! E_{\mathrm{I_2}(p),\underline{g}} \mid \partial F_{\mathrm{I_2}(p)}\! /\partial y=\partial F_{\mathrm{I_2}(p)}\!/\partial x =0\}=: \! Sing(E_{\mathrm{I_2}(p),\underline{g}})$ when $\underline{g}\ \in\! D_{\mathrm{I_2}(p)}$.
}),  
which has a characterization, up to a constant factor, as the unique  primitive form for the family \eqref{eq:equation}.
\footnote{
As we shall see immediately in Fact 11, the form $\zeta_{\mathrm{I_2}(p)}$ is, so called,  the elliptic integral of the first kind for the compact elliptic curves   $\overline{E}_{\mathrm{I_2}(p),\underline{g}}$ for $\underline{g} \in S_{\mathrm{I_2}(p)}$ (see, for instance, \cite{Si}). 
On the other hand, from a view point of integrals of vanishing cycles of the universal family \eqref{eq:family} of type $\mathrm{I_2}(p)$, the form $\zeta_{\mathrm{I_2}(p)}$ is, up to an ambiguity of a constant factor, called the primitive form of the family  (to be exact, this is a restriction of the primitive form $\zeta$ defined on the bigger family of type $\mathrm{A_2}$, $A_3$ or $D_4$. Since the action of $\sigma$ on the big family (recall Footnote 9) preserves the form $\zeta$, it induces the form $\zeta_{\mathrm{I_2}(p)}$ \eqref{eq:primitiveform}) (see \cite{S1,S5} for primitive forms). 

A primitive form in general has distinguished characterizations, which we shall implicitly (but not explicitly) use in the present paper (actually, we used already the action of the primitive vector field in \S4, which is an important ingredient of the primitive form theory), and which we recall briefly as follows.
Let us consider the twisted relative de-Rham cohomology group $\mathbb{R}\pi_{\mathrm{I_2}(p),*}(\Omega_{X_{\mathrm{I_2}(p)}/S_{\mathrm{I_2}(p)}} )$ of the family \eqref{eq:family}.It is a filtered $\mathcal{O}_s$-module equipped with the Gauss-Manin connection and higher residue pairings. Then, $\zeta_{\mathrm{I_2}(p)}$ is an element in the 0th filter satisfying 1) primitivity, 2) homogeneity, 3) orthogonality, 4) holonomicity (see, for instance, \cite{S1}). In particular, the primitivity means that the covariant differentiations of $\zeta_{\mathrm{I_2}(p)}$ generate all cohomology classes so that just a single choice of $\zeta_{\mathrm{I_2}(p)}$ is sufficient. 
}

\medskip
In the present section, we introduce the period map  \eqref{eq:periods} by integrating $\zeta_{\mathrm{I_2}(p)}$ over closed cycles in $E_{\mathrm{I_2}(p),\underline{g}}$. The description of its inversion map \eqref{eq:periodmap} by the use of Eisenstein series over the lattices $L_{\mathrm{I_2}(p)}\subset L/\rad(L)$ is the main subject of the present paper. In order to understand them, the study of integrals over the closed cycles are not sufficient, but we need to develop a study of integrals of $\zeta_{\mathrm{I_2}(p)}$ over non closed paths in  $\overline{E}_{\mathrm{I_2}(p),\underline{g}}$. Precisely, we study  integrals over the arcs $\delta_i$ in the present section and over indefinite paths in the next section.

Consider the integral of $\zeta_{\mathrm{I_2}(p)}$ over a horizontal family  of  cycles $\gamma \in \mathrm{H}_1(E_{\mathrm{I_2}(p),\underline{g}},\mathbb{Z})$ 
(where $\underline{g}$ runs over a simply connected open subset of $S_{\mathrm{I_2}(p)}\! \setminus \! D_{\mathrm{I_2}(p)}$)
\begin{equation}
\label{eq:period}
\omega_{\gamma}\ :=\ \oint_\gamma \zeta_{\mathrm{I_2}(p)}.
\end{equation}
 As is well-known that $\omega_\gamma$ is a holomorphic function in the parameter $\underline{g}$ (which can be confirmed by a use of Leray's residue formula) on the domain where the family $\gamma$ is defined.  On the other hand, $\omega_\gamma$ depends only on its equivalence class $[\gamma]\in L/\rad(L)$ due to the following fact.
 
\medskip
\noindent
{\bf Fact 11.} {\it The form $\zeta_{\mathrm{I_2}(p)}$ can be extended holomorphically on the compactified elliptic curves $\overline{E}_{\mathrm{I_2}(p),\underline{g}}$ for $\underline{g}\in S_{\mathrm{I_2}(p)}$. The extended form is nowhere vanishing on the smooth part $\overline{E}_{\mathrm{I_2}(p),\underline{g}}\setminus Sing(\overline{E}_{\mathrm{I_2}(p),\underline{g}})$ of the curve.} 
\begin{proof}
 If $(x,y)\in E_{\mathrm{I_2}(p),\underline{g}}$ is a non-singlar point, then either of  $\partial F_{\mathrm{I_2}(p)}(x,y,\underline{g})/\partial x$ or $\partial F_{\mathrm{I_2}(p)}(x,y,\underline{g})/\partial y$ is non-zero. Then, one of the two explicit expressions of $\zeta_{\mathrm{I_2}(p)}$ in Footnote 16 gives a non-vanishing  and holomorphic expression of $\zeta_{\mathrm{I_2}(p)}$ at the point $(x,y)$. At the infinity points, we have already observed in Footnote 3 that $\overline{E}_{\mathrm{I_2}(p),\underline{g}}$ is smooth. Then using that expressions, we check again that $\zeta_{\mathrm{I_2}(p)}$ is holomorphic and non-vanishing.  The details are left to the reader.  
\end{proof}

This means that  $\zeta_{\mathrm{I_2}(p)}$ may be regarded as a relative de Rham cohomology class of the family of compactified elliptic curves $\overline{E}_{\mathrm{I_2}(p),\underline{g}}$. The cycles $\gamma_1,\cdots,\gamma_{[p/2]}$ are homologous to each other in $\overline{E}_{\mathrm{I_2}(p),\underline{g}}$. This implies
$
\oint_{\gamma_1} \zeta_{\mathrm{I_2}(p)}=\cdots = \oint_{\gamma_{[p/2]}} \zeta_{\mathrm{I_2}(p)},
$
 and,  $\omega_\gamma$ depends only on the class $[\gamma]\in \widetilde{L}/\rad(\widetilde{L})$. Therefore, we regard $\omega_\gamma$ as a holomorphic function defined on $\widetilde{S}_{\mathrm{I_2}(p)}$. 
 In particular, in view of \eqref{eq:alpha-beta}, we have expressions
$$
\omega_\alpha:=\oint_{\alpha} \zeta_{\mathrm{I_2}(p)} \ = \  [p/2]\  \omega_{\gamma_1} \quad \ \text{and} \ \quad 
\omega_\beta:=\oint_{\beta}\zeta_{\mathrm{I_2}(p)} \ =  \ \omega_{\gamma_0} .
$$
 Thus, in the present paper, we shall integrate either over a cycle in $\widetilde{L}_{\mathrm{I_2}(p)}$ and a cycle $\gamma$ in $\tilde{L}/\rad(\tilde{L})$.\footnote
{It is rather restrictive view point to study  integrals only over $\sigma_{\mathrm{I_2}(p)}$-invariant cycles or over equivalent class of cycles. This is caused by the fact that our family \eqref{eq:equation} is already the  subfamily of the full families of type $A_3$ or $D_4$, and is fixed by a cyclic action $\sigma$ (recall Footnote 1 and 7) where the full ($A_3$ or $D_4$) lattice $L$ does not play role. The studies of period integrals of the primitive form over the full lattice $L$ in the big families of type $A_3$ and $D_4$ (unpublished) are by themselves interesting subject and should appear elsewhere.
} (in such situation, we shall say ``integrate a cycle $\gamma \in \widetilde{L}_{\mathrm{I_2}(p)} \subset \widetilde{L}/\rad (\widetilde{L})$"), or over ``special arcs" $\delta_i$ \eqref{eq:deltai}.

For each point $\tilde{\underline{g}}\in \widetilde{S}_{\mathrm{I_2}(p)}$ \eqref{eq:covering}, we consider the linear map 
\begin{equation}
\label{eq:periods}
\gamma \ \in \  \widetilde{L}_{\mathrm{I_2}(p)} \subset \widetilde{L}/\rad (\widetilde{L})  
\quad \mapsto \quad  \omega_\gamma:=\oint_\gamma \zeta_{\mathrm{I_2}(p)} \ \in \ \mathcal{O}_{\tilde{S}_{\mathrm{I_2}(p)}} 
\end{equation}
where we set $\mathcal{O}_{\tilde{S}_{\mathrm{I_2}(p)}}=\text{holomorphic functions on $\tilde{S}_{\mathrm{I_2}(p)}$}$.

Using the trivialization \eqref{eq:trivial} of $\widetilde{L}_{\mathrm{I_2}(p)} \subset \widetilde{L}/\!\sim$, we obtain a holomorpbic map
\begin{equation}
\label{eq:map}
\begin{array}{rccc}
\quad P_{\mathrm{I_2}(p)}  :  \!& \! \widetilde{S}_{\mathrm{I_2}(p)}  \! & \! \rightarrow\! &\!  \Hom_\mathbb{Z}(L_{\mathrm{I_2}(p)},\mathbb{C}) =  \Hom_\mathbb{Z}(L/\mathrm{rad}(L) ,\mathbb{C})\quad (\simeq \ \mathbb{C}^2) \!\! \!\!\\
\\
& \underline{g} & \! \mapsto\! &   \gamma \  \mapsto \ \omega_\gamma:= \oint_{\gamma(\underline{g})} \zeta_{\mathrm{I_2}(p)} \quad
\end{array}
\end{equation}
where the identification in RHS is canonically given by the change 
\begin{equation}
\label{eq:level}
\omega_\alpha= [p/2] \ \omega_{\gamma_1}  \quad \text{and} \quad  \omega_\beta =\omega_{\gamma_0} 
\end{equation}
of the basis in both vector spaces. We shall call $P_{\mathrm{I_2}(p)}$ the {\it period map} associated with the primitive form $\zeta_{\mathrm{I_2}(p)}$ \eqref{eq:primitiveform}.  

\medskip
\noindent
{\bf Fact 12.}  {\it The period map $P_{\mathrm{I_2}(p)}$ is locally bi-holomorphic.}
\begin{proof}
We show the Jacobian determinant of the map is no-where vanishing. We use an essential property: the {\it primitivity} of $\zeta_{\mathrm{I_2}(p)}$ \cite{S1}. Since the proof uses relative de-Rham cohomology theory for the family \eqref{eq:family} which is beyond the scope of present paper, a complete proof is left to the literature but we give here a brief sketch of its idea. 

There exists, so called, the Gauss-Manin covariant differentiation operator $\nabla$ on the module (over $\mathcal{O}_{S_{\mathrm{I_2}(p)}}$) of relative de Rham cohomology classes of  \eqref{eq:family}. Then, one basic property, called the primitivity, of a primitive form is that its covariant differentiations $\nabla_{\partial g_s}\zeta_{\mathrm{I_2}(p)}$ and $\nabla_{\partial g_l}\zeta_{\mathrm{I_2}(p)}$ generate the relative de-Rham cohomology module (here $\partial_{g_s}$ and $\partial_{g_l}$ stand for the partial derivatives w.r.t. the coordinate system $\underline{g}$). Then, standard duality between the de-Rham cohomology group and the ($\sigma_{\mathrm{I_2}(p)}$-invariant) singular homology group of the curve $E_{\mathrm{I_2}(p),\underline{g}}$ implies 
$\det
\begin{bmatrix}
\oint_{\gamma_0} \nabla_{\partial_{g_s}}\zeta_{\mathrm{I_2}(p)} &\!\! \oint_{\gamma_0} \nabla_{\partial_{g_l}}\zeta_{\mathrm{I_2}(p)}\\
\oint_{\gamma_1} \nabla_{\partial_{g_s}}\zeta_{\mathrm{I_2}(p)} &\!\! \oint_{\gamma_1} \nabla_{\partial_{g_l}}\zeta_{\mathrm{I_2}(p)} 
\end{bmatrix}
\not=0
$.
Since the integral $\oint$ commutes with the derivation action $\partial_g$ and the covariant differentiation $\nabla_{\partial_{g}}$, we see that the Jacobian determinant 
$\det
\begin{bmatrix}
\partial_{g_s}\oint_{\gamma_0} \zeta_{\mathrm{I_2}(p)} &\!\! \partial_{g_l}\oint_{\gamma_0} \zeta_{\mathrm{I_2}(p)}\\
\partial_{g_s} \oint_{\gamma_1} \zeta_{\mathrm{I_2}(p)} &\!\! \partial_{g_s} \oint_{\gamma_1} \zeta_{\mathrm{I_2}(p)} 
\end{bmatrix}
$
$\not=0
$
does not vanish.
\end{proof}

\smallskip
Let us now formulate the first main theorem of the present paper.

\begin{theorem}
\label{perioddomain}
{\it The period map \eqref{eq:map} induces a biholomorphic  map
\begin{equation}
\label{eq:periodmap}
P_{\mathrm{I_2}(p)}\ :\ \widetilde{S}_{\mathrm{I_2}(p)} 
\quad \simeq \quad
  \widetilde{\mathbb{H}} \ , \qquad
\end{equation}
where the RHS of \eqref{eq:periodmap}, so called the period domain, is given as
\begin{equation}
\begin{array}{rcl}
\label{eq:perioddomain}
\widetilde{\mathbb{H}} & := & \{\omega\in\Hom_\Z(L_{\mathrm{I_2}(p)},\C)\mid
\Im(\omega_\alpha/\omega_\beta)>0\} \\
\\
\\
\\
&= & \{\omega\in\Hom_\Z(L/\mathrm{rad}(L),\C)\mid
\Im(\omega_{\gamma_1}/\omega_{\gamma_0})>0\},
\end{array}
\end{equation}
(here we used again \eqref{eq:level} for the identification of the first and the second lines).
The map \eqref{eq:map} 
 is equivariant with the action of the group $\Gamma_1([p/2])\subset \mathrm{SL}_{\mathbb{Z}}(L_{\mathrm{I_2}(p)})\cap \mathrm{SL}_2(\mathbb{Z})$.}
\end{theorem}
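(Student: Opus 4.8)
The plan is to establish four structural properties of $P_{\mathrm{I_2}(p)}$ and then combine them. \emph{(i) $P_{\mathrm{I_2}(p)}$ takes values in $\widetilde{\mathbb H}$.} By Fact 11 the form $\zeta_{\mathrm{I_2}(p)}$ restricts, for $\underline g\notin D_{\mathrm{I_2}(p)}$, to a nowhere vanishing holomorphic $1$-form on the smooth compact elliptic curve $\overline{E}_{\mathrm{I_2}(p),\underline g}$; since $\langle[\gamma_0],[\gamma_1]\rangle=1$ in $L/\rad(L)=\mathrm{H}_1(\overline{E}_{\mathrm{I_2}(p),\underline g},\Z)$ by \eqref{eq:intersection1}, Riemann's bilinear relation for this symplectic basis forces $\omega_{\gamma_0}\neq 0$ and $\Im(\omega_{\gamma_1}/\omega_{\gamma_0})>0$, with no sign ambiguity to resolve. \emph{(ii) $P_{\mathrm{I_2}(p)}$ is $\Gamma_1([p/2])$-equivariant.} By \eqref{eq:covering} the covering $\widetilde{S}_{\mathrm{I_2}(p)}\to S_{\mathrm{I_2}(p)}\setminus D_{\mathrm{I_2}(p)}$ has deck group $G_{\mathrm{I_2}(p)}/\ker\rho\cong\Gamma_1([p/2])$, and moving by a deck transformation $g$ replaces the global section $\gamma$ of the trivialisation \eqref{eq:trivial} by $\rho(g)\gamma$, whence $P_{\mathrm{I_2}(p)}(g\cdot\tilde{\underline g})=\rho(g)\cdot P_{\mathrm{I_2}(p)}(\tilde{\underline g})$ for the standard linear action of $\Gamma_1([p/2])\subset\mathrm{SL}_\Z(L_{\mathrm{I_2}(p)})\cap\mathrm{SL}_2(\Z)$ (cf.\ Fact 9 and \eqref{eq:basechange}). \emph{(iii)} $P_{\mathrm{I_2}(p)}$ is a local biholomorphism: this is Fact 12. \emph{(iv) $P_{\mathrm{I_2}(p)}$ is $\C^\times$-equivariant.} Normalising the weights of Table 1 to be integral (multiply by $6,4,3$ for $\mathrm{A_2},\mathrm{B_2},\mathrm{G_2}$) one gets $\wt(z)=-1$ and a genuine $\C^\times$-action on $(g_s,g_l)$ which lifts to $\widetilde{S}_{\mathrm{I_2}(p)}$; since $\zeta_{\mathrm{I_2}(p)}=\Res[dx\,dy/F_{\mathrm{I_2}(p)}]$ is homogeneous of weight $\wt(z)=-1$, one has $\omega_\gamma(t\cdot\tilde{\underline g})=t^{-1}\omega_\gamma(\tilde{\underline g})$, i.e.\ $P_{\mathrm{I_2}(p)}$ intertwines this $\C^\times$-action with scalar multiplication by $t^{-1}$ on $\Hom_\Z(L/\rad(L),\C)$, which preserves $\widetilde{\mathbb H}$.

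Granting that $P_{\mathrm{I_2}(p)}$ is proper (see below), a proper local biholomorphism onto the connected manifold $\widetilde{\mathbb H}$ is a covering map, and it remains to see the degree is $1$. The fibration $\widetilde{\mathbb H}\to\mathbb H$, $\omega\mapsto\omega_{\gamma_1}/\omega_{\gamma_0}$, is a holomorphically trivial $\C^\times$-bundle over the contractible $\mathbb H$ on whose fibres the $\C^\times$ of (iv) acts by weight $-1$; hence $\pi_1(\widetilde{\mathbb H})\cong\Z$ is generated by the loop of a full $\C^\times$-orbit. On the source $\pi_1(\widetilde{S}_{\mathrm{I_2}(p)})=\ker\rho\cong\Z$ by \eqref{eq:covering} and \eqref{eq:congruence2}, and one checks that a full $\C^\times$-orbit loop generates it: its $\rho$-monodromy on periods is $t\mapsto t^{-1}$, trivial after one turn precisely because $\wt(z)=-1$, so the loop lies in $\ker\rho$, and comparison with \eqref{eq:congruence2} shows it is a generator. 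Since $P_{\mathrm{I_2}(p)}$ is $\C^\times$-equivariant it carries this generator to a generator of $\pi_1(\widetilde{\mathbb H})$, so $P_{\mathrm{I_2}(p)\,*}$ is an isomorphism on $\pi_1$; the covering therefore has degree $1$ and $P_{\mathrm{I_2}(p)}$ is a biholomorphism, the $\Gamma_1([p/2])$-equivariance having been recorded in (ii). It is the exact value $\wt(z)=-1$, not merely its negativity, that forces degree $1$; for type $\mathrm{A_2}$ this step is the classical uniformisation of the Weierstrass family, and an explicit inverse for all three types is produced later from the Eisenstein series of type $\mathrm{I_2}(p)$.

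The main obstacle is properness: if $\tilde{\underline g}_n$ leaves every compact subset of $\widetilde{S}_{\mathrm{I_2}(p)}$, one must show $\omega_{\gamma_1}/\omega_{\gamma_0}\to\partial\mathbb H$ or $|\omega_{\gamma_0}|\to 0$ or $\infty$. There are three escape routes. If $\tilde{\underline g}_n$ runs to infinity within a fibre of $\widetilde{S}_{\mathrm{I_2}(p)}\to S_{\mathrm{I_2}(p)}\setminus D_{\mathrm{I_2}(p)}$, then $g_n\to\infty$ in $\Gamma_1([p/2])$ and (ii) together with the proper discontinuity of the $\Gamma_1([p/2])$-action on $\widetilde{\mathbb H}$ sends the image to infinity. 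If the image $\underline g_n$ runs radially to $0$ or to $\infty$ in $\C^2$, the homogeneity $\omega_{\gamma_0}(t\cdot\underline g)=t^{-1}\omega_{\gamma_0}(\underline g)$, with $\omega_{\gamma_0}$ bounded away from $0$ and $\infty$ on compacta of a transverse slice by (i), makes $|\omega_{\gamma_0}|\to\infty$ or $0$. The substantial case is $\underline g_n\to\underline g_\ast\in D_{\mathrm{I_2}(p)}\setminus\{0\}$, where the curve acquires a node: by Facts 3 and 4 the pinching cycle is $\gamma_0$ along $\partial_-\Gamma_{\mathrm{I_2}(p)}$ and the common class of the $\gamma_i$ along $\partial_+\Gamma_{\mathrm{I_2}(p)}$, so by Fact 11 the period of the vanishing cycle has a finite nonzero limit (the residue of the limiting logarithmic form at the node), while the Picard--Lefschetz description of the monodromy (Fact 8) forces the period of a transverse cycle to grow like $\tfrac{1}{2\pi\sqrt{-1}}\log$ of a local equation of the relevant component of $D_{\mathrm{I_2}(p)}$; hence $\omega_{\gamma_1}/\omega_{\gamma_0}\to\partial\mathbb H$. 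Assembling the three cases, and treating approach to $\underline g=0$ or to a singular point of $D_{\mathrm{I_2}(p)}$ by combining the radial and nodal estimates, yields properness. This boundary asymptotics is where the real work lies; everything else is formal.
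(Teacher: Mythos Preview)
Your approach is genuinely different from the paper's. The paper defers the full proof to the end of \S9: it first constructs an explicit candidate inverse $E:\widetilde{\mathbb H}\to S_{\mathrm{I_2}(p)}$ from the Eisenstein series of type $\mathrm{I_2}(p)$ (Theorem~\ref{primitiveautomorphicform}), shows that $E$ lands in $S_{\mathrm{I_2}(p)}\setminus D_{\mathrm{I_2}(p)}$, and then uses the commutative triangle \eqref{eq:Inversion} together with path-lifting and the free transitive action of $\Gamma_1([p/2])$ on the fibres of $E$ to conclude that $P_{\mathrm{I_2}(p)}$ is bijective. Your route---properness, hence covering, hence a $\pi_1$-count---avoids constructing the inverse. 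What the paper's method buys is the explicit Eisenstein-series description of the inverse, which is the essential input to \S10--11; what yours buys is a self-contained proof of the bare bijectivity, at the cost of the degeneration analysis underlying properness.

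Two remarks on your argument. First, in the degree-$1$ step you assert that the $\C^\times$-orbit loop generates $\pi_1(\widetilde{S}_{\mathrm{I_2}(p)})=\ker\rho$, citing \eqref{eq:congruence2}; this is neither obvious nor implied by that exact sequence (one would need to identify the orbit loop with $(\tilde a\tilde b)^{k(\mathrm{I_2}(p))}$ in $G_{\mathrm{I_2}(p)}$, which is a separate geometric fact). Fortunately it is also unnecessary: once $P_{\mathrm{I_2}(p)}$ is a covering, $P_*:\Z\to\Z$ is injective with image of index equal to the degree, and by $\C^\times$-equivariance $P_*$ sends the orbit loop to a \emph{generator} of $\pi_1(\widetilde{\mathbb H})$; hence the image of $P_*$ is all of $\Z$ and the degree is $1$ automatically. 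Second, the properness argument is where the real content lies and remains a sketch: the three escape mechanisms (along the $\Gamma_1([p/2])$-fibre, radially via the $\C^\times$-action, and towards a smooth point of $D_{\mathrm{I_2}(p)}$) do not partition the ways a sequence can leave every compactum, and approaching $\underline g=0$ or a singular point of the discriminant curve requires combining the estimates with some care. The nodal asymptotics you invoke are correct (the vanishing-cycle period has a finite nonzero limit, the transverse one diverges logarithmically), but turning this into a clean properness proof is precisely the labour the paper's explicit-inverse route sidesteps.
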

\begin{proof} 
This result for the case of type $\mathrm{A_2}=\mathrm{I_2}(3)$, is the well-known classic (e.g.\ see \cite{Si}). We want to show the parallel world for the other types $\mathrm{B_2}$ and $\mathrm{G_2}$ exists.  
The complete proof of this theorem can be given after solving the Jacobi-inversion problem for those period maps in Theorem \ref{primitiveautomorphicform} in \S9.  
%
In the present section, we show only the following classical well-known fact:

\medskip
\noindent
{\bf Fact 13.}
{\it The image of the period map \eqref{eq:map} is contained in RHS of \eqref{eq:periodmap}. 
In particular, by the period integral \eqref{eq:period}, the lattice $\widetilde{L}/\rad(\widetilde{L})_{\underline{g}}$  for $\underline{g}\in \widetilde{S}_{\mathrm{I_2}(p)}$  is embedded into a discrete lattice, called the period lattice: 
\begin{equation}
\label{eq:periodlattice}
\Omega_{\widetilde{L}/\rad(\widetilde{L}),\underline{g}}:=\mathbb{Z}\omega_{\gamma_0} + \mathbb{Z}\omega_{\gamma_1}
\end{equation}
in the complex plane $\mathbb{C}$. }

\medskip
 Actually, this is well-known as a consequence of  (1) the Riemann's inequality:
$
\frac{1}{2\sqrt{-1}}\int_{\overline{E}_{\mathrm{I_2}(p),\underline{g}}} \overline{\zeta_{\mathrm{I_2}(p)}} \wedge \zeta_{\mathrm{I_2}(p)}>0,
$
due to the positivity of the real volume form  
$
\frac{1}{2\sqrt{-1}}\overline{\zeta_{\mathrm{I_2}(p)}} \wedge \zeta_{\mathrm{I_2}(p)}
$
and (2) the Stokes relation:
$
\int_{\overline{E}_{\mathrm{I_2}(p),\underline{g}}} \!\!\! \overline{\zeta_{\mathrm{I_2}(p)}} \! \wedge \! \zeta_{\mathrm{I_2}(p)} \! = \! \overline{\oint_{\gamma_1}\!\! \zeta_{\mathrm{I_2}(p)}} \oint_{\gamma_0} \!\! \zeta_{\mathrm{I_2}(p)}
\! - \! \overline{\oint_{\gamma_0} \!\! \zeta_{\mathrm{I_2}(p)}} \oint_{\gamma_1}\!\! \zeta_{\mathrm{I_2}(p)}
$, 
due to the fact that the cycles $\gamma_0$ and $\gamma_1$ give a canonical dissection of the real surface $\overline{E}_{\mathrm{I_2}(p),\underline{g}}$ where $ \zeta_{\mathrm{I_2}(p)}$ has no poles (i.e.\ it is only a topological but not analytical property.  c.f.\ the first row of Figure 3.).
\end{proof}

Before ending this section, let us consider also the integrals over the special arcs $\delta_i$ \eqref{eq:deltai} constructed in Fact 1 in \S2 and Fact 10 in  \S4.  Namely, for every $\underline{g}\in \tilde{S}_{\mathrm{I_2}(p)}$ and $1\le i\le [p/2]$, set 
\begin{equation}
\label{eq:deltaperiod}
\omega_{\delta_i} := \int_{\delta_i} \zeta_{\mathrm{I_2}(p)}.
\end{equation}
Recall that the $\delta_i$'s are cyclically permuted by $\sigma_{\mathrm{I_2}(p)}$ and their sum is homologous to $\gamma_0$ (see \eqref{eq:deltai} and Fact 10.1).  On the other hand, one sees immediately from the expression \eqref{eq:primitiveform} that the form $\zeta_{\mathrm{I_2}(p)}$ is invariant by the action of $\sigma_{\mathrm{I_2}(p)}$.  These together implies $\omega_{\delta_1}=\cdots= \omega_{\delta_{[p/2]}}$ and $\omega_{\delta_1}+\cdots+ \omega_{\delta_{[p/2]}}=\omega_{\gamma_0}$. Thus those arc integrals \eqref{eq:deltaperiod} are expressed in terms of a classical period of a vanishing cycle as follows.
\begin{equation}
\label{eq:deltaperiod2}
 \omega_{\delta_1}=\cdots= \omega_{\delta_{[p/2]}}= \frac{1}{[p/2]} \omega_{\beta}\ \big(= \frac{1}{[p/2]} \omega_{\gamma_0} \big)
\end{equation}

Finally in this section, let us notice some elementary but useful facts.

\medskip
\noindent
{\bf Fact 14.} {\it The periods $\omega_{\gamma_i}$ and $\omega_{\delta_i}$  ($i=1,\cdots,[p/2]$) are weighted homogeneous functions on $\tilde{S}_{\mathrm{I_2}(p)}$ of  weights given in Table 1. That is,  the $\mathbb{C}^\times$ action on $S_{\mathrm{I_2}(p)}$ is naturally lifted to that on $\tilde{S}_{\mathrm{I_2}(p)}$, which, for an abuse of notation, we shall denote $\underline{g}\mapsto t^{\wt(\underline{g})}\underline{g}$,
so that we have the following equivariance w.r.t.\ the action of $t\in \mathbb{C}^\times$.
$$
\begin{array}{c}
\omega_{\gamma_i}(t^{\wt(\underline{g})}\underline{g}) \ = \ t^{\wt(\omega_{\gamma_i} )}\ \omega_{\gamma_i}(\underline{g}) \text{\quad and \quad} \omega_{\delta_i}( t^{\wt(\underline{g})}\underline{g}) \ = \ t^{\wt(\omega_{\delta_i})} \ \omega_{\delta_i}(\underline{g})
\end{array}
$$
where $\wt(\omega_{\gamma_i})=\wt(\omega_{\delta_i})=\wt(z)$. Putting negatively graded structure on RHS's of \eqref{eq:map} and \eqref{eq:periodmap} by multiplication $t^{\wt(z)}$ for $t\in\mathbb{C}^\times$, the maps \eqref{eq:map} and \eqref{eq:periodmap} are equivariant with respect to the $\mathbb{C}^\times$-action.}

\medskip
 {\it Proof.} Recall that the $\mathbb{C}^\times$ action on the space $X_{\mathrm{I_2}(p)}$ extends to its partial compactification $\overline{X}_{\mathrm{I_2}(p)}$ in such manner that the restriction of the action on the divisors $\infty_i\times S_{\mathrm{I_2}(p)}$ are equivariant  with that on $S_{\mathrm{I_2}(p)}$.  Then, the action induces an action on the set of paths in $\overline{E }_{\mathrm{I_2}(p),\underline{g}}$ starting from  $\infty_1\times \underline{g}$, where the end point of the path is acted by the  $\mathbb{C}^\times$. We replace the integral \eqref{eq:Hamilton-time} over a path by the integral over the ``acted" path. Due to the expression \eqref{eq:primitiveform}, we have $\wt(z)=\wt(x)+\wt(y)-\wt(F_{\mathrm{I_2}(p)})$.
$\Box$.

\bigskip
\section{Jacobian variety}
 We 
study  integrals of the primitive form $\zeta_{\mathrm{I_2}(p)}$ over paths in the smooth part of the curve $\overline{E}_{\mathrm{I_2}(p), \underline{g}}$ for each fixed $\underline{g}\in S_{\mathrm{I_2}(p)}\! \setminus \! D_{\mathrm{I_2}(p)}$ which may not neccesarily be closed. That is, we study the Jacobian variety of the curve $\overline{E}_{\mathrm{I_2}(p),\underline{g}}$.\footnote
{We can study in parallel the case when $\underline{g}$ belongs to the discriminant $D_{\mathrm{I_2}(p)}$ by replacing $\overline{E}_{\mathrm{I_2}(p)\underline{g}}$ by $\overline{E}_{\mathrm{I_2}(p)\underline{g}}\! \setminus \! Sing(\overline{E}_{\mathrm{I_2}(p)\underline{g}})$ and $L_{\mathrm{I_2}(p)}$ by the 
lattices of rank 1 and 0. 
Details are left to the reader.}
\  Precisely, we focus on  the integrals from the point at infinity $\infty_1$: 
\begin{equation}
\label{eq:Hamilton-time}
\qquad \qquad  z:=\int_{\infty_1}^{(x,y)^{\sim}}\zeta_{\mathrm{I_2}(p)}  \qquad \in \ \mathbb{C}
\end{equation} 
where $(x,y)^{\sim}$ is a point in the universal covering (with respect to the base point $\infty_1$) of  the curve $\overline{E}_{\mathrm{I_2}(p)\underline{g}}$ which lies over a point $(x,y) \in \overline{E}_{\mathrm{I_2}(p)\underline{g}}$, or, equivalently, a homotopy class of rectifiable paths in the curve  $\overline{E}_{\mathrm{I_2}(p)\underline{g}}$ from $\infty_1$ to $(x,y) \in \overline{E}_{\mathrm{I_2}(p)\underline{g}}$ (the notation ${(x,y)^{\sim}}$ is ambiguous and we use it only here).

It is a classic that the integral \eqref{eq:Hamilton-time} induces a biholomorphic map from the universal covering of $\overline{E}_{\mathrm{I_2}(p), \underline{g}}$ to the complex plane $\mathbb{C}$ ({\it Proof.}  The map is locally bi-regular (Fact 11) and is equivariant with the covering transformation of $\pi_1(\overline{E}_{\mathrm{I_2}(p)\underline{g}},\infty_1)$ on $\overline{E}_{\mathrm{I_2}(p), \underline{g}}^\sim$ with the translation action by the full period lattice $\Omega_{\widetilde{L}/\rad(\widetilde{L})\underline{g}}\!\! =\!  \mathbb{Z}\omega_{\gamma_0}\! \oplus \! \mathbb{Z}\omega_{\gamma_1}$ on $\mathbb{C}$  (Fact 13)).


\smallskip
For a later use in the study of inversion problem, let us confirm the direction of the Hamiltonian $F_{\mathrm{I_2}(p)}$ at the base point $\underline{g}_0\in \Gamma_{\mathrm{I_2}(p)}$ as follows.
 

\medskip
\noindent
{\bf Fact 15.} {\it By the map \eqref{eq:Hamilton-time} for $\underline{g}_0\in \Gamma_{\mathrm{I_2}(p)}$, the cycle \eqref{eq:realcycle} is mapped to the real interval $[0, \omega_\beta]$ in the $z$-complex plane.}

In particular, one has the correspondence:  $\infty_i   \leftrightarrow   \frac{i}{[p/2]} \omega_\beta \bmod \mathbb{Z} \omega_\beta$.

 \begin{figure}[h]
 \center
  \hspace{-0.3cm}\includegraphics[width=6.0cm]{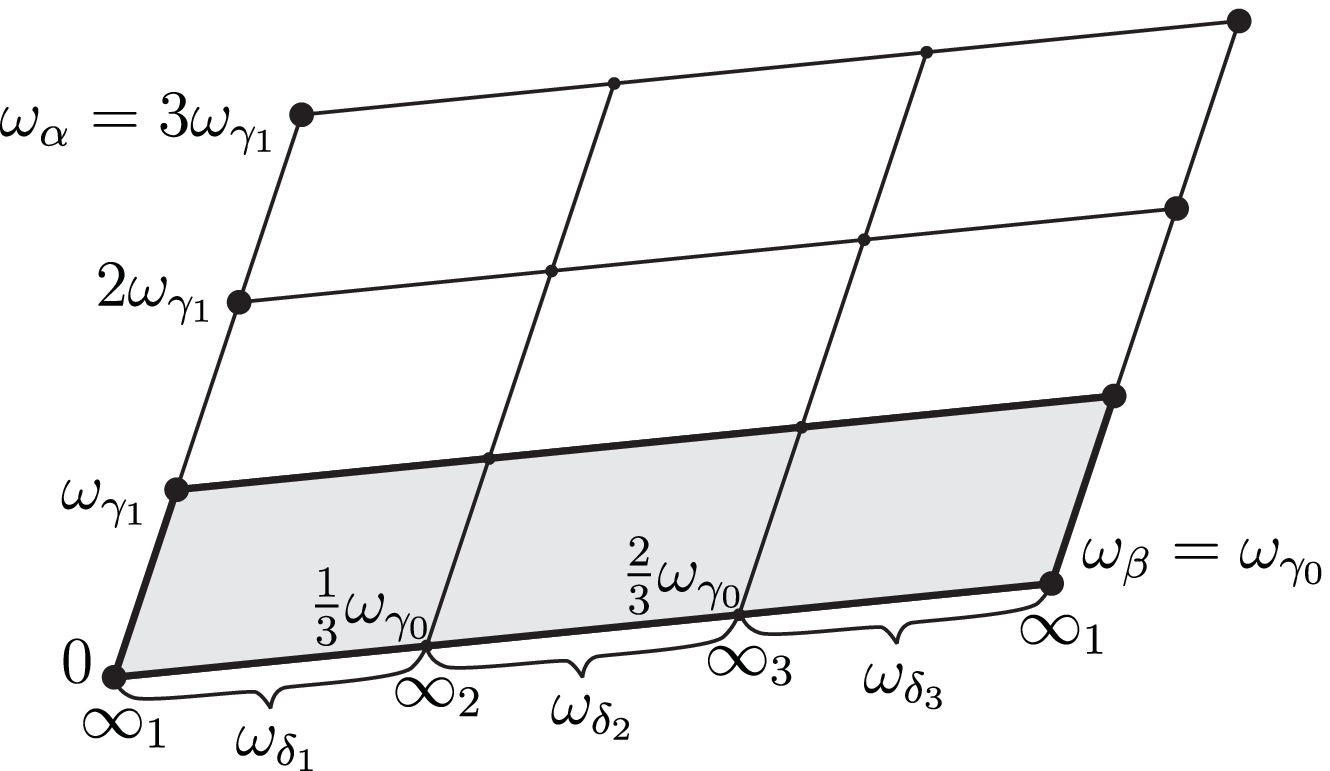}
  \vspace{0.2cm}
 \end{figure}
\centerline{ {\bf Figure 5.} {Period lattice $\Omega_{\mathrm{G_2},\underline{g}}$ and its fundamental domain}}

\noindent
 {\footnotesize  The big black spots indicate points in   $\Omega_{\mathrm{G_2},\underline{g}}$, and other small spots indicate other places of the poles of the functions $x_{\mathrm{I_2}(p)}(z,\underline{g})$ and  $y_{\mathrm{I_2}(p)}(z,\underline{g})$. Shaded area is  a fundamental domain (=period parallelogram) for translation action of  $\Omega_{\mathrm{G_2},\underline{g}}$.}

\begin{proof} 
Let's first observe that the integral \eqref{eq:Hamilton-time} over the arc $\delta_1$ takes real increasing values. For the purpose, we only need to notify that $\delta_1$ is a real path as in Figure 2. and the integrant $\zeta_{\mathrm{I_2}(p)}=\frac{dx}{\partial F_{\mathrm{I_2}(p)}/\partial y}$ takes positive real values on the arc $\delta_1$, or equivalently, the function $\partial F_{\mathrm{I_2}(p)}/\partial y$ takes negative real values  when $x$ is decreasing and takes positive real values when x is increasing on the path $\delta_1$ (near at $\infty_1$). This can be confirmed directly depending on cases using the condition  \eqref{eq:delta1}. The same argument works for the integral over the arcs $(\delta_2,\infty_2),\cdots,(\delta_{[p/2]},\infty_{[p/2]})$. Then, recall that the cycle \eqref{eq:realcycle} is homologous to $\gamma_0=\beta$.
\end{proof}

We consider the inverse map:  $z= \int_{\infty_1}^{(x,y)^{\sim}}\zeta_{\mathrm{I_2}(p)} \in \mathbb{C} \mapsto (x,y) \in \overline{E}_{\mathrm{I_2}(p),\underline{g}} $. More precisely, we associate to $z\in \mathbb{C}$ the coordinate values of the corresponding point $(x,y)\in \overline{E}_{\mathrm{I_2}(p), \underline{g}}$ by the relation \eqref{eq:Hamilton-time} and denote it by
\begin{equation}
\label{eq:pfunction}
x_{\mathrm{I_2}(p)}(z,\underline{g})  \quad \text{and}  \quad  y_{\mathrm{I_2}(p)}(z,\underline{g}), 
\end{equation}
respectively.\footnote{
Inversion expression of the coordinate $(x,y)$ with respect to  the integral value $w= \int_{\infty}^{(x,y)} \omega$ over the elliptic integral of the first kind $\omega=dx/${\tiny $\sqrt{4x^3-g_{s}x+g_l}$} is well-known to be given by Weierstrass $\mathfrak{p}$-function and its derivative as $x=\mathfrak{p}(w),y=\mathfrak{p}'(w)$.  Since there exists a factor relation $\zeta_{\mathrm{A_2}}=\omega/2$, we have the relation $w=2z$ and the period lattice gets half size. Then, $x_{\mathrm{A_2}}(z) =\frac{1}{4}\mathfrak{p}(z), y_{\mathrm{A_2}}(z)=\frac{1}{8}\mathfrak{p}'(z)$.
}
 To be careful,  the values of the ``functions" $x_{\mathrm{I_2}(p)}$ and $y_{\mathrm{I_2}(p)}$ may not be defined when $(x,y)$ represents a point at infinity \eqref{eq:compact1}, however, the function is obviously holomorphic at other points and we see easily those undefined points are removable to a holomorphic or meromorphic function. \footnote
{Here, one should be slightly cautious that $x_{\mathrm{I_2}(p)}$ and $y_{\mathrm{I_2}(p)}$ are (at present stage) as  functions in  $\underline{g} \in S_{\mathrm{I_2}(p)}\setminus D_{\mathrm{I_2}(p)}$ only pointwise.  Their holomorphic dependence on $\underline{g}$ can be shown again by a use of Leray's residue formula (details are omitted). However,  their  extendability to $D_{\mathrm{I_2}(p)}$ is not a priori obvious). Actually combining with the  discussions in Footnote 15, it is possible to show that they actually are extendable to the functions on the whole $S_{\mathrm{I_2}(p)}$. However, without using that logic, we show directly in Lemma \ref{eq:LaurentSolution}  in \S7.
}

Recalling the two local expressions  in Footnote 16 of the de Rham class of $\zeta_{\mathrm{I_2}(p)}$, we obtain the following Hamilton's equation of motion:
\begin{equation}
\label{eq:Hamiltonean}
\frac{\partial x_{\mathrm{I_2}(p)}(z,\underline{g})}{\partial z}  =\frac{\partial F_{\mathrm{I_2}(p)}(x,y,\underline{g})}{\partial y}  \quad \text{and} \quad 
\frac{\partial y_{\mathrm{I_2}(p)}(z,\underline{g})}{\partial z}  = -\frac{\partial F_{\mathrm{I_2}(p)}(x,y,\underline{g})}{\partial x}. 
\end{equation}
However, this equation of the motion (which depends only on $g_s$ but not on $g_l$) alone does not determine the solution \eqref{eq:pfunction} uniquely. 
In order to recover the functions \eqref{eq:pfunction} as a function in $z$, we need to put the following constraint on the energy level (depending on $g_l$) of the motion:
\begin{equation}
\label{eq:energy}
F_{\mathrm{I_2}(p)}(x_{\mathrm{I_2}(p)},y_{\mathrm{I_2}(p)},\underline{g})=0
\end{equation}
We remark also that the functions $x_{\mathrm{I_2}(p)}$ and $y_{\mathrm{I_2}(p)}$, which are no-longer polynomials but meromorphic in $z$, are still weighted homogeneous functions if we give the weights to $x,y, z$ and $\underline{g}$ as given in the Table 1. That  is, we have the following equivariance w.r.t. the action of $t\in \mathbb{C}^\times$.
\begin{equation}
\label{eq:weightedhomo}
\begin{array}{c}
x_{\mathrm{I_2}(p)}(t^{\wt(z)}z, t^{\wt(\underline{g})}\underline{g}) \ = \ t^{\wt(x)} \ x_{\mathrm{I_2}(p)}(z,\underline{g}),\\ y_{\mathrm{I_2}(p)}(t^{\wt(z)}z, t^{\wt(\underline{g})}\underline{g}) \ = \ t^{\wt(y)} \ y_{\mathrm{I_2}(p)}(z,\underline{g})
\end{array}
\end{equation}
 ({\it Proof of \eqref{eq:weightedhomo}.}) Recall that the $\mathbb{C}^\times$ action on the space $X_{\mathrm{I_2}(p)}$ extends to its partial compactification $\overline{X}_{\mathrm{I_2}(p)}$ in such manner that the point $\infty_1$ at infinity of a curve stays at infinity $\infty_1$ of the curve whose parameter $\underline{g}$ is acted by the $\mathbb{C}^\times$ action.  Then, the action induces an action on the set of paths  starting from  $\infty_1$ to a point in the curve, where the end point of the path is acted by the  $\mathbb{C}^\times$. We replace the integral \eqref{eq:Hamilton-time} over a path by the integral over the ``acted" path. Due to the expression \eqref{eq:primitiveform}, we have $\wt(z)=\wt(x)+\wt(y)-\wt(F_{\mathrm{I_2}(p)})$.
$\Box$). 

One crucial fact here is that the only variable $z$ has the negative weight (recall Table 1) and the functions can be (and, actually, is)  transcendental in $z$.

\section{Laurent series solutions at infinities}

We study formal Laurent series solutions of the equation of the motion \eqref{eq:Hamiltonean} together with the constraint \eqref{eq:energy} of the energy level and the weight condition \eqref{eq:weightedhomo}. The solutions are exactly in one to one correspondence with the set of points at infinity of the curve $\widetilde{E}_{\mathrm{I_2}(p),\underline{g}}$.  

More exactly, we do the following shift of center of Laurent expansion. 
Namely, if $x_{\mathrm{I_2}(p)}(z)$ or $y_{\mathrm{I_2}(p)}(z)$ has a non trivial pole at $z=\omega(\underline{g})$ (where $\omega(\underline{g})$ is a function  of $\underline{g}\in \widetilde{S}_{\mathrm{I_2}(p)}$  of weight  $=\wt(z)$, then we consider the Laurent expansion of the pair $({\mathbf x}(\z),{\mathbf y}(\z)):=(x_{\mathrm{I_2}(p)}(\z+\omega(\underline{g})),y_{\mathrm{I_2}(p)}(\z+\omega(\underline{g})))$ with respect to the local formal coordinate $\z$ at $0$. The pair $({\mathbf x}(\z),{\mathbf y}(\z))$ satisfies the pair of  equations 
$$
\leftline{(5.3)* \quad\quad  {\large $
\frac{\partial {\mathbf x}(\z,\underline{g})}{\partial \z}  =\frac{\partial F_{\mathrm{I_2}(p)}({\mathbf x},{\mathbf y},\underline{g})}{\partial {\mathbf y}}  \quad \text{\normalsize and} \quad 
\frac{\partial {\mathbf y}(\z,\underline{g})}{\partial \z}  = -\frac{\partial F_{\mathrm{I_2}(p)}({\mathbf x},{\mathbf y},\underline{g})}{\partial {\mathbf x}} $}}
$$
since the equations \eqref{eq:Hamiltonean} are invariant by the shift of the center of the expansion. Next Lemma classify all formal solutions of the equations with non-trivial poles. 

\medskip
\begin{lem} 
\label{eq:LaurentSolution} 
{\it Consider the system of the equations \eqref{eq:Hamiltonean} together with the constraint \eqref{eq:energy} and the weight condition \eqref{eq:weightedhomo}. Then it has exactly $[p/2]$-pairs of formal Laurent series solutions having non-trivial pole, which are in one to one  correspondence with the set  $\{\infty_1,\cdots,\infty_{[p/2]}\}$ of the points at infinity \eqref{eq:compact1}  of the curve $\overline{E}_{\mathrm{I_2}(p)}$. 
\begin{equation}
\label{eq:Solution}
\begin{array}{ccc}
 \text{\{Solutions with non-trivial pole 
 \} } & \simeq & \text{  \{Points at infinity\} }\!\!\!\\
({\mathbf x}(\z),{\mathbf y}(\z))  & \mapsto  & \ \underset{\z \downarrow 0}{ \lim }\  ({\mathbf x}(\z),{\mathbf y}(\z)) \!\!
\end{array}
\end{equation}
In particular, the linear transformation $\sigma_{\mathrm{I_2}(p)}$ \eqref{eq:autom} acts on the set of solutions cyclically equivariant with the bijection \eqref{eq:Solution}. The coefficients of the Laurent series are $\mathbb{Q}$-coefficients weighted homogeneous polynomials in $\underline{g}$ so that the solution is a pair of weighted homogeneous functions of weight $(\wt(x),\wt(y))$.

}
\end{lem}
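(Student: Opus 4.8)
The plan is to treat the shifted Hamilton equations $(5.3)^{*}$, the energy constraint \eqref{eq:energy}, and the homogeneity condition \eqref{eq:weightedhomo} as a single weighted‑homogeneous formal Laurent recursion, to show this recursion has exactly $[p/2]$ branches, and to check that each branch is completed in a unique way by the energy constraint. First I would read off the pole structure from \eqref{eq:weightedhomo}. Writing ${\mathbf x}(\z)=\sum_n a_n\z^n$ and ${\mathbf y}(\z)=\sum_n b_n\z^n$, homogeneity forces $a_n$ (resp.\ $b_n$) to be a weighted‑homogeneous polynomial in $\underline{g}$ of weight $\wt(x)-n\wt(z)$ (resp.\ $\wt(y)-n\wt(z)$); since $\wt(z)<0$ while $g_s$ and $g_l$ have positive weights, this weight is $\ge 0$ only for $n\ge\wt(x)/\wt(z)$ (resp.\ $n\ge\wt(y)/\wt(z)$). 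By Table~1 this bounds the pole of ${\mathbf x}$ by $2,1,1$ and that of ${\mathbf y}$ by $3,2,1$ in the respective types $\mathrm{A_2},\mathrm{B_2},\mathrm{G_2}$, shows the bottom nonzero coefficient is a constant (weight $0$), and more generally forces $a_n,b_n$ to vanish unless their weight lies in the numerical semigroup $\langle\wt(g_s),\wt(g_l)\rangle$. In particular a solution with non‑trivial pole has ${\mathbf x}$ or ${\mathbf y}$ singular, of one of these bounded orders.

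Next I would substitute the ansatz into $(5.3)^{*}$ and match lowest‑order terms to pin down the leading coefficients. For $\mathrm{A_2}$ this forces $a_{-2}=1/4$; for $\mathrm{B_2}$ it forces $a_{-1}=\pm 1/2$; for $\mathrm{G_2}$ it gives either $a_{-1}=\pm 1/2$ with $b_{-1}=-1/2$ (both coordinates with a simple pole) or ${\mathbf x}$ regular with ${\mathbf y}$ simple‑pole, $b_{-1}=1$ and $a_0=-g_s$ — in all cases $1$, $2$, $3$ branches, that is, exactly $[p/2]$. Passing to the limit $\z\downarrow 0$ inside the smooth curve $\overline{E}_{\mathrm{I_2}(p),\underline{g}}$ then identifies these branches with the $[p/2]$ points at infinity: the unique point for $\mathrm{A_2}$; for $\mathrm{B_2}$ the two points of the normalisation lying over the double‑tangency point, distinguished by ${\mathbf x}\to\pm\infty$, i.e.\ by the sign of $a_{-1}$, in accordance with the labelling fixed in Fact~2; and for $\mathrm{G_2}$ the three transversal points $[0:1:0]$ and $[1:\pm1:0]$ (Footnote~3). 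This is the bijection \eqref{eq:Solution}.

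Then, for each fixed leading branch, I would run the recursion: comparison of $\z$‑coefficients in $(5.3)^{*}$ determines $(a_j,b_j)$ successively, each as the solution of a linear equation in $(a_j,b_j)$ over the already‑known lower‑order coefficients. A finite Kovalevskaya/Painlev\'e‑type computation for each of the three equations shows this linear equation is invertible at every level except one ``resonant'' level $j=r$; the second resonance that formally appears corresponds to a more singular perturbation and is excluded by \eqref{eq:weightedhomo}. At $j=r$ a single scalar degree of freedom survives the Hamilton recursion, and this is where \eqref{eq:energy} intervenes: since \eqref{eq:energy} is a first integral of $(5.3)^{*}$ — indeed $\frac{d}{d\z}F_{\mathrm{I_2}(p)}({\mathbf x},{\mathbf y})=\frac{\partial F_{\mathrm{I_2}(p)}}{\partial{\mathbf x}}{\mathbf x}'+\frac{\partial F_{\mathrm{I_2}(p)}}{\partial{\mathbf y}}{\mathbf y}'=0$ by $(5.3)^{*}$ — the quantity $F_{\mathrm{I_2}(p)}({\mathbf x},{\mathbf y})$ is a $\z$‑constant, hence a weighted‑homogeneous polynomial in $\underline{g}$ of weight $\wt(F_{\mathrm{I_2}(p)})=1$, and imposing \eqref{eq:energy} is precisely the one equation ``this polynomial is zero'', which fixes the resonant coefficient (whose weight one checks to be $\wt(g_l)=1$) to a unique value. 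The absence of a logarithmic obstruction at $j=r$ is the remaining part of the same finite check; alternatively it is automatic, for by Fact~11 the form $\zeta_{\mathrm{I_2}(p)}$ is holomorphic and nowhere vanishing on $\overline{E}_{\mathrm{I_2}(p),\underline{g}}$, so $z$ is a local coordinate at each $\infty_i$ and the meromorphic coordinate functions $x,y$ on $\overline{E}_{\mathrm{I_2}(p),\underline{g}}$ already realise every leading branch by a genuine Laurent solution. Since the leading coefficients, the inversion of the linear equations at non‑resonant levels, and the value forced by \eqref{eq:energy} at $j=r$ are all computed over $\mathbb{Q}$ with only the rational coefficients of $F_{\mathrm{I_2}(p)}$ as data, an induction on $j$ shows that every $a_n,b_n$ is a weighted‑homogeneous polynomial in $\underline{g}$ with rational coefficients, as claimed.

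For the equivariance statement: $\sigma_{\mathrm{I_2}(p)}$ in \eqref{eq:autom} acts on $(x,y)$ through $\mathrm{SL}_2$, hence preserves $dx\wedge dy$ and $F_{\mathrm{I_2}(p)}$, so it preserves $(5.3)^{*}$ and \eqref{eq:energy}, and it preserves \eqref{eq:weightedhomo} as well since it is linear with constant coefficients and $\wt(x)=\wt(y)$ in type $\mathrm{G_2}$. Thus $\sigma_{\mathrm{I_2}(p)}$ permutes the set of solutions, and it commutes with \eqref{eq:Solution} because it permutes $\{\infty_1,\dots,\infty_{[p/2]}\}$ cyclically (\S3 and Fact~10.2); as that permutation is a single $[p/2]$‑cycle, so is the induced action on the $[p/2]$ solutions. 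The step I expect to be the crux is the resonant level $j=r$: verifying that the recursion there is consistent (no logarithms) and that \eqref{eq:energy} constrains the one surviving coefficient nontrivially, so that every branch admits a unique completion. This is the only point where the precise form of $F_{\mathrm{I_2}(p)}$ — rather than just the weights in Table~1 — is used; it is dealt with either by the short case‑by‑case computation indicated above or by borrowing the existence of the geometric Laurent solution from Fact~11.
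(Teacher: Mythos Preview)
Your proposal is correct and follows essentially the same route as the paper: bound the pole orders by homogeneity, classify the $[p/2]$ leading branches from the lowest-order terms of $(5.3)^*$, then run the coefficient recursion with the energy constraint \eqref{eq:energy} entering at the single weight-$1$ level (your ``resonant'' $j=r$ is the paper's $n_0$), and finally check that the linear system governing $(a_n,b_n)$ is invertible for $n>n_0$ --- the paper does this by writing down the explicit $2\times 2$ determinants $2(2n+3)(n-2)$, $2(2n-3)(n+2)$, $(n+2)(n-2)$. Your Kovalevskaya/Painlev\'e framing and the option of borrowing existence from Fact~11 to bypass the compatibility check are nice touches, but they repackage rather than replace the paper's computation.
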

\begin{proof}
The proof is divided into 4 steps.

\smallskip
\noindent
Step 1.  For each type $\mathrm{I_2}(p)$, consider the pair of Laurent series:\begin{equation}
\label{eq:Laurent}
\begin{array}{cccccccc}
{\bf x}_{\mathrm{A_2}}(\z,\underline{g}) &\!=\!& \sum_{n=-a}^\infty A_n\z^{2n} & \text{and} & {\bf y}_{\mathrm{A_2}}(\z,\underline{g}) &\!=\! &\sum_{n=-b}^\infty B_n\z^{2n-1} \\
{\bf x}_{\mathrm{B_2}}(\z,\underline{g}) &\!=\!& \sum_{n=-a}^\infty A_n\z^{2n+1} & \text{and} & {\bf y}_{\mathrm{B_2}}(\z,\underline{g}) & \!=\! &\sum_{n=-b}^\infty B_n\z^{2n} \\
{\bf x}_{\mathrm{G_2}}(\z,\underline{g}) & \!=\!& \sum_{n=-a}^\infty A_n\z^n & \text{and} & {\bf y}_{\mathrm{G_2}}(\z,\underline{g}) &\!=\! &\sum_{n=-b}^\infty B_n\z^n \\
\end{array}
\end{equation} 
of indeterminate coefficients $A_n$ and $B_n$ with non vanishing leading terms $A_{-a}B_{-b}\not=0$ \footnote
{Here, we, unfortunately, use the notation $A_n$ and $B_n$ for the coefficients of the Laurent series, which have nothing to do with the classification names $A_n$ and $B_n$ for root systems. Since they are used only inside present proof, one should cautiously read them.
}
(according to the even property of $x_{\mathrm{A_2}}$ and $y_{\mathrm{B_2}}$ or odd property of $y_{\mathrm{A_2}}$ and $x_{\mathrm{B_2}}$ caused by the $\mathbb{Z}/2$-symmetry $(x,y)\! \to\! (x,-y)$, the sum consist either of even or odd powers in $\z$). 
We assume that the pair has non-trivial pole, i.e.\ at least one of $a$ or $b$ is positive. Here the coefficients are weighted homogeneous functions in $\underline{g}\in \widetilde{S}_{\mathrm{I_2}(p)}$, a priori not necessarily polynomials, of weight 
$$
\wt(A_n)=\wt(B_n)=(1+n)/d
$$ 
where $d=3,2$ or 3 according as $\mathrm{I_2}(p)=\mathrm{A_2},\mathrm{B_2}$ or $\mathrm{G_2}$ (use Table 1 for weights for $x,y$ and $z$).
Therefore, for each type $\mathrm{I_2}(p)$, there exists a positive integer $n_0$ such that 
$$
\wt(A_{n_0})=\wt(B_{n_0}) =1=\wt(g_l).
$$ 
(actually, $n_0=d-1=2,1$ or $2$ according as $\mathrm{I_2}(p)=\mathrm{A_2},\mathrm{B_2}$ or $\mathrm{G_2}$).

\smallskip 
\noindent
Step 2.  
Using only the equations (5.3)$^*$ double inductively  on $A_n$ $(n\ge -a)$ and $B_n$ $(n\ge -b)$, one can determine the coefficients until the degree  $n<n_0$, where we observe two basic facts.

(1)  The initial term $(A_{-a},B_{-b})$ are constants independent of the parameter, and we have exactly $[p/2]$-number of solutions. 
More exactly, according to each initial direction condition listed in the following table \eqref{eq:initial},  there exists a unique solution satisfying it, where,  in the last case, ${\bf x}_{\mathrm{I_2}(p)}$ does not have non-trivial pole.
 The list of explicit solutions is given in  \eqref{eq:Intermediate}, where one confirm that the coefficients are $\mathbb{Q}$-coefficient  polynomials in $g_s$. The calculation is case by case and we omit details.  
\begin{equation}
\label{eq:initial}
\begin{array}{rccl}
\mathrm{A_2} & \infty_1 &:&  A_{-a} >0   \quad \text{and} \quad  B_{-b} <0  \\
\\
\mathrm{B_2}& \infty_1 &:&  A_{-a} >0   \quad \text{and} \quad  B_{-b} <0  \\
\mathrm{B_2}& \infty_2 &:&  A_{-a} <0   \quad \text{and} \quad  B_{-b} >0  \\
\\
\mathrm{G_2} & \infty_1  &:&  A_{-a} >0   \quad \text{and} \quad  B_{-b} <0  \\
\mathrm{G_2} & \infty_2  &:&  A_{-a} <0   \quad \text{and} \quad  B_{-b} <0  \\
\mathrm{G_2} & \infty_3 &:&  \quad\!\! a\le 0\ \   \quad \text{and} \quad  B_{-b} <0
\end{array}
\end{equation}
{\small } 


Already in this initial solutions level (under the assumption that they shall later extend to full solution), we can confirm the bijection \eqref{eq:Solution} by the use of Figure 2, where the ``roots" of paths $\delta_i$'s are the infinity points $\infty_i$.  
Therefore, we indicated the point at infinity in the table \eqref{eq:initial} of initial conditions and in the table \eqref{eq:Intermediate} of (partial) solutions. 

(2) {\it The second nontrivial term ($(A_1,B_1)$ for type $\mathrm{A_2}$ and $(A_0,B_0)$ for types $\mathrm{B_2}$ and $\mathrm{G_2}$) contains the variable $g_s$ non-trivial linearly.}

\bigskip
\noindent
Step 3.  
We next use the energy condition  \eqref{eq:energy} to determine the coefficients  $A_{n_0}, B_{n_0}$ (actually, the equation \eqref{eq:Hamiltonean} alone cannot determine the energy level). We confirm that $g_l$ appears non-trivially in $A_{n_0}, B_{n_0}$.  
According to the 6 initial conditions listed in \eqref{eq:initial}, the results are given below. In particular, we confirm that {\it the variable $g_l$ appears as a non-trivial linear term in $A_{n_0}$ and/or in $B_{n_0}$}.
\begin{equation}
\label{eq:Intermediate}
\begin{array}{lll}
{\bf x}_{\mathrm{A_2},\infty_1}(\z)  =  \ \ \frac{1}{4}\z^{-2} + \frac{1}{5}g_{s}\ z^2 + \frac{4}{7} g_l \z^4+ \sum_{n=3}^\infty A_nz^{2n} 
\\
{\bf y}_{\mathrm{A_2},\infty_1}(\z)   =   -\frac{1}{4}\z^{-3} + \frac{1}{5}g_{s}\z + \frac{8}{7} g_l \z^3+ \sum_{n=3}^\infty B_n \z^{2n-1}\\
\\
\vspace{0.2cm}
\\
{\bf x}_{\mathrm{B_2},\infty_1}(\z)   =  \ \  \frac{1}{2} \z^{-1}+ \frac{1}{3}g_s \z  +\big(\frac{1}{18}g_s^2-\frac{4}{5}g_l\big) \z^3 +  \sum_{n=2}^\infty A_n \z^{2n+1}  \\
\\
{\bf y}_{\mathrm{B_2},\infty_1}(\z)   = -\frac{1}{4} \z^{-2} + \frac{1}{6}g_s +\big(\frac{1}{12}g_s^2-\frac{6}{5}g_l\big) \z^2+ \sum_{n=2}^\infty B_n \z^{2n}
\\
\vspace{0.2cm}
\\
{\bf x}_{\mathrm{B_2},\infty_2}(\z) = -\frac{1}{2} \z^{-1} - \frac{1}{3} g_s \z -(\frac{1}{18}g_s^2-\frac{4}{5}g_l) \z^3 +   \sum_{n=2}^\infty A_n \z^{2n+1}   \\
{\bf y}_{\mathrm{B_2},\infty_2}(\z)  = \quad \frac{1}{4} \z^{-2} - \frac{1}{6} g_s - (\frac{1}{12} g_s^2- \frac{6}{5}g_l) \z^2 +  \sum_{n=2}^\infty B_n \z^{2n}   \\
\vspace{0.2cm}
\\
{\bf x}_{\mathrm{G_2},\infty_1}(\z)   =  \ \ \frac{1}{2} \z^{-1} +\frac{1}{2}g_s + \frac{3}{2}g_s^2 \z + (g_s^3-\frac{1}{2}g_l) \z^2 + \sum_{n=3}^\infty A_n \z^n \\
\\
{\bf y}_{\mathrm{G_2},\infty_1}(\z)   =   -\frac{1}{2} \z^{-1}  + \frac{3}{2} g_s -  \frac{3}{2} g_s^2 \z + (3g_s^3-\frac{3}{2}g_l) \z^2 + \sum_{n=3}^\infty B_n \z^n \\
\\
\vspace{0.2cm}
\\
{\bf x}_{\mathrm{G_2},\infty_2}(\z) = -\frac{1}{2} \z^{-1} + \frac{1}{2} g_s  - \frac{3}{2}g_s^2 \z + (g_s^3-\frac{1}{2}g_l) \z^2 + \sum_{n=3}^\infty A_n \z^n   \\
{\bf y}_{\mathrm{G_2},\infty_2}(\z)  =  -\frac{1}{2} \z^{-1} - \frac{3}{2} g_s  - \frac{3}{2} g_s^2 \z^2 -  (3g_s^3-\frac{3}{2}g_l) \z^2 + \sum_{n=3}^\infty B_n \z^n 
\\
\vspace{0.2cm}
\\
{\bf x}_{\mathrm{G_2},\infty_3}(\z) =     - g_s  \ \ + (-2 g_s^3 + g_l) \z^2 + \ \sum_{n=2}^\infty A_{2n} \z^{2n}  \\
{\bf y}_{\mathrm{G_2},\infty_3}(\z) = \quad \z^{-1}  + \qquad 3g_s^2\z  \qquad + \ \sum_{n=2}^\infty B_{2n-1} \z^{2n-1}
\end{array}
\end{equation}

\smallskip 
\noindent
Step 4.
To determine the coefficients $A_n$ and $B_n$ for $n>n_0$, we use again the equation \eqref{eq:Hamiltonean}. By inserting \eqref{eq:Intermediate} in (5.3)$*$,  compare coefficients of the Laurent expansions in BHS. Let $C_n$ (resp.\ $D_n$) be the coefficient polynomial of the power of $\z$ in the RHS of \eqref{eq:Hamiltonean} whose degree coincides with the term $A_n$ (resp.\ $B_n$) in LHS.  The $C_n$ and $D_n$ are rational coefficients weighted homogenous polynomials in $g_s$, $g_l$ and $A_m,B_m$ ($m\in\mathbb{Z}_{>n_0}$). Since the total weight of $C_n$ (resp.\ $D_n$) is equal to $\wt(A_n)=\wt(B_n)$, the coefficient $C_n$ (resp.\ $D_n$) cannot contain $A_m,B_m$ for $m>n$ and $A_n$ and $B_n$ appear only linearly with constant coefficients. The linear coefficients are independent of $n$, since such terms appear in the expansion of RHS of (5.3)$*$ only when the term $A_n$ or $B_n$ multiplied with the constant coefficient terms of $x_{\mathrm{I_2}(p)}$ and $y_{\mathrm{I_2}(p)}$, that is lowest degree terms $A_{-a}  z^{-a}$ and $B_{-b}z^{-b}$ (see \eqref{eq:Intermediate}). But, such pattern does not depends on $n\in \mathbb{Z}_{>n_0}$. Calculating explicitly the linear coefficients, we obtain the equations:
\begin{equation}
\label{eq:recurence}
\begin{array}{clll}
\mathrm{A_2} \text{ type } : &  2n A_n =   2 B_n  +C'_n,   &  (2n-1)B_n =   6 A_n  + D'_n \!\!\! \\
\\
\mathrm{B_2} \text{ type }  : &  (2n+1) A_n =   2  B_n  +C'_n,   & 2n B_n =  3 A_n + D'_n  \\
\\
\mathrm{G_2} \text{ type }  : &  n A_n =   -A_n +  B_n  +C'_n,  &  nB_n =   3A_n + B_n + D'_n  \\
\end{array}
\end{equation}
where $C'_n$ and $D'_n$ are the remaining part of $C_n$ and $D_n$ after subtracting the linear terms in $A_n$ and $B_n$.
We observe immediately that the determinant of coefficients of $A_n$ and $B_n$ in the two equation for the three types are given by
{\small $$
\begin{array}{rcrcrl}
\mathrm{A_2}:\quad &\!\! \det\!
\begin{bmatrix}
 2n \!\! & \!\! -2\\
 -6 \!\!&\! 2n\!-\!1
 \end{bmatrix}  &
\mathrm{B_2}:\quad &\!\! \det\!
\begin{bmatrix}
 2n\!+\!1\! \!&\!\! -2\\
 -3 \!\!&\!\! 2n
 \end{bmatrix} &
\mathrm{G_2}: \quad &\!\!
\det\!
\begin{bmatrix}
 n\!+\!1\! \!\!&\!\! -1\\
 -3 \!\!&\! \! n\!-\!1
 \end{bmatrix}  \\
\qquad =\!\! &\!2(2n\!+\!3)(n\!-\!2), &
\qquad =\!\! &\! 2(2n\!-\!3)(n\!+\!2), &
\qquad = \!\! &\! (n\!+\!2)(n\!-\!2)
\end{array}
$$
}
which takes positive values for $n>n_0$. Thus $A_n$ and $B_n$ are uniquely  expressed as rational coefficients polynomials in $A_m, B_m$ ($n_0\le m<n$) and $g_s$. This gives the inductive construction of the coefficients $A_n$ and $B_n$ ($n\in \mathbb{Z}_{n_0}$).

This completes the proof of Lemma \ref{eq:LaurentSolution} .
\end{proof}

As a result of Lemma \ref{eq:LaurentSolution}, we can determine the principal parts of the Laurent expansions of the meromorphic functions $x_{\mathrm{I_2}(p)}(z,\underline{g})$ and $y_{\mathrm{I_2}(p)}(z,\underline{g})$ \eqref{eq:pfunction}.  

\medskip
\noindent
{\bf Fact 16.}  1.{ \it The formal Laurent series solutions in Lemma 6.1 are convergent.}

2. {\it The following substitutions of $\z$ in the formal series solution \eqref{eq:Intermediate}: 
\begin{equation}
\label{eq:principalpart}
\begin{array}{ccc}
{\bf x}_{\mathrm{I_2}(p),\infty_i}(z-(\frac{i-1}{[p/2]} \omega_{\gamma_0} + \omega_\gamma)) , \quad 
{\bf y}_{\mathrm{I_2}(p),\infty_i}(z-(\frac{i-1}{[p/2]} \omega_{\gamma_0} + \omega_\gamma))
\end{array}
\end{equation}
give the Laurent  expansions of the meromorphic functions $x_{\mathrm{I_2}(p)}(z,\underline{g})$ and $y_{\mathrm{I_2}(p)}(z,\underline{g})$ at the place $\frac{i-1}{[p/2]} \omega_{\gamma_0} + \omega_\gamma$ for any $1\le i\le [p/2]$ and $\gamma\in \tilde{L}/\rad(\tilde{L})$.}
\begin{proof} 1. can be shown as a consequence of the next 2. 

2.  We know already from geometry (recall a discussion after the definition \eqref{eq:pfunction} and the description of Fact 15) that the functions 
$x_{\mathrm{I_2}(p)}(z,\underline{g})$ and $y_{\mathrm{I_2}(p)}(z,\underline{g})$ may have poles only at the places 
$\frac{i-1}{[p/2]} \omega_0 + \omega_\gamma$ for $i=1,\cdots,[p/2]$ and $\gamma \in  \tilde{L}/\rad(\tilde{L})$. In view of the asymptotic behavior of paths $\delta_i$'s at their starting points  in the first row of Figure 1,  we observe that all of them become poles except that the function  $x_{\mathrm{G_2}}$ at the places $\frac{2}{3} \omega_0 + \omega_\gamma$ for any $\gamma\in  \tilde{L}/\rad(\tilde{L})$) does not have a pole

Obviously, the Laurent expansions at those places should satisfy the equations \eqref{eq:Hamiltonean} together with the constraint \eqref{eq:energy}, which further satisfy the initial constraint \eqref{eq:initial} according to its location. Then, the uniqueness of the solution of the  equations under the initial constraint  implies that the formal solution should coincide with the expansion of $x_{\mathrm{I_2}(p)}(z,\underline{g})$ or $y_{\mathrm{I_2}(p)}(z,\underline{g})$. 
\end{proof}

\medskip
We  note that the proof of Lemma 6.1  actually covers also the Laurent series expansions of the cases for $\underline{g} \in D_{\mathrm{I_2}(p)}$. The \eqref{eq:initial} express the Laurent series expressions of the coordinate $(x,y)$ for the cases of $\underline{g} \in D_{\mathrm{I_2}(p)}$ (recall Footnote 15 and 17.) as trigonometric or rational functions in $z$.

\smallskip
Let us give explicitly the first few terms of the Laurent expansion of  $x_{\mathrm{I_2}(p)}(z,\underline{g})$ and $y_{\mathrm{I_2}(p)}(z,\underline{g})$ at the origin $z=0$ as follows. 

\bigskip
\noindent
{$\mathrm{A_2}$ case.}\vspace{-0.3cm}
$$
\begin{array}{lll}
x_{\mathrm{A_2}}(z)  =  \frac{1}{4}z^{-2} + \frac{1}{5}g_{s}z^2 + \frac{4}{7} g_lz^4+\frac{4}{75}g_s^2z^6+ \frac{48}{385} g_sg_l z^8 + 
 \cdots 
\\
 y_{\mathrm{A_2}}(z)   =   -\frac{1}{4}z^{-3} + \frac{1}{5}g_{s}z + \frac{8}{7} g_lz^3+ \frac{4}{25} g_s^2 z^5 + \frac{192}{385} g_s g_l z^7 + \cdots\\
\end{array}
$$ 

\noindent
{$\mathrm{B_2}$ case.}
\vspace{-0.3cm}
$$
\label{eq:pzeta}
\begin{array}{lcl}
x_{\mathrm{B_2}}(z)   =    \frac{1}{2}z^{-1}+ \frac{1}{3}g_s z  +\big(\frac{1}{18}g_s^2-\frac{4}{5}g_l\big)z^3 +  \big(\frac{1}{27}g_s^3-\frac{8}{35}g_s g_l\big) z^5 +\cdots  \\
y_{\mathrm{B_2}}(z)   = -\frac{1}{4} z^{-2} + \frac{1}{6}g_s +\big(\frac{1}{12}g_s^2-\frac{6}{5}g_l\big)z^2+ \big(\frac{5}{54}g_s^3-\frac{4}{7}g_s g_l\big) z^4 + \cdots
\end{array}
$$ 

\noindent
{$\mathrm{G_2}$ case.}
\vspace{-0.3cm}
$$
\label{eq:derzeta}
\begin{array}{lcl}
x_{\mathrm{G_2}}(z)   =   \frac{1}{2}z^{-1} +\frac{1}{2}g_s + \frac{3}{2}g_s^2 z + (g_s^3-\frac{1}{2}g_l)z^2 + (\frac{3}{2}g_s^4-\frac{6}{5}g_lg_s)z^3 +\cdots \\
y_{\mathrm{G_2}}(z)   =   -\frac{1}{2} z^{-1}  + \frac{3}{2} g_s -  \frac{3}{2} g_s^2 z + (3g_s^3-\frac{3}{2}g_l)z^2 - (\frac{3}{2}g_s^4-\frac{6}{5}g_lg_s)z^3 +\cdots \\
\end{array}
$$ 

\medskip
{\rm\large \qquad Table 2:\quad {\normalsize }}Laurent expansions of $x_{\mathrm{I_2}(p)}$ and 
 $y_{\mathrm{I_2}(p)}$ at $z=0$.\!\!}

\section{Partial fractional expansions}
 We come back to  the global study of the meromorphic functions  $ x_{\mathrm{I_2}(p)}(z,\underline{g})$ and $y_{\mathrm{I_2}(p)}(z,\underline{g})$ in $z$ \eqref{eq:pfunction}. The goal of this section is to give the partial fractional expansion of them. The study belongs to classical elliptic function theory. In particular, $\mathrm{A_2}$-type case is well-known as Weierstrass $\mathfrak{p}$-function theory. We generalize it for  the other two types $\mathrm{B_2}$ and $\mathrm{G_2}$, since those descriptions in the present section, lead to ``generalized Eisenstein series" expression of the modular forms for the congruence subgroups $\Gamma_1(2)$ and $\Gamma_1(3)$ in the next section, which seems to have be unknown (see also Remark \ref{exceptionalEisenstein} in \S9).  

We recall the classical Weierstrass's $\mathfrak{p}$-function and $\zeta$-function associated with any point $(\omega_0,\omega_1)\in \widetilde{\mathbb{H}}$ as meromorphic functions on the $z$-plane with double or simple poles (see, e.g.\ \cite{H-C}).
$$
\begin{array}{rcl}
\mathfrak{p}(z) & = & \frac{1}{z^2} + \underset{\omega\not=0 \in \Omega}{\sum} \Big(\frac{1}{(z-\omega)^2} -\frac{1}{\omega^2} \Big)\\
\zeta(z)  &= & \frac{1}{z} + \underset{\omega \not=0 \in \Omega}{\sum} \Big(\frac{1}{z-\omega} +\frac{1}{\omega} +\frac{z}{\omega^2} \Big),\\
\end{array}
$$
where we set 
$
\Omega  := \mathbb{Z} \omega_0  +  \mathbb{Z} \omega_1
$. 
Since they are compact uniform convergent on $\mathbb{C}\times\widetilde{\mathbb{H}}\setminus \cup_{m,n\in\mathbb{Z}}\{z-(m\omega_1+n\omega_0)=0\}$ , one may derivate them termwisely.   In particular, one has the well-known relation:  
$
\zeta'(z) \ = \ -\frak{p}(z) . 
\footnote{The notation ``\ $'$ \ " or ``\ $''$ \ " shall mean  single or  twice derivative with respect to $z$.
}
$

\begin{theorem}
\label{Fractional}
{\it The meromorphic functions  $ x_{\mathrm{I_2}(p)}(z,\underline{g})$ and $y_{\mathrm{I_2}(p)}(z,\underline{g})$  have the following partial fractional expansions.

\begin{equation}
\label{eq:fractionalexpansion}
\begin{array}{lll}
x_{\mathrm{A_2}}(z,\underline{g})=  \frac{1}{4} \mathfrak{p}(z)  \\
y_{\mathrm{A_2}}(z,\underline{g})=   \frac{1}{8} \mathfrak{p}'(z)  \\
 \\
  x_{\mathrm{B_2}}(z,\underline{g})= -\frac{1}{2}\zeta(\frac{1}{2}\omega_0) + \frac{1}{2} \zeta(z) - \frac{1}{2} \zeta(z-\frac{1}{2}\omega_{\gamma_0}) \\
 y_{\mathrm{B_2}}(z,\underline{g})=  -\frac{1}{4} \mathfrak{p}(z)  + \frac{1}{4} \mathfrak{p}(z-\frac{1}{2}\omega_{\gamma_0}) \\
 \\
  x_{\mathrm{G_2}}(z,\underline{g}) = - \frac{1}{6} \zeta(\frac{1}{3}\omega_{\gamma_0}) -  \frac{1}{6} \zeta(\frac{2}{3}\omega_{\gamma_0}) + \frac{1}{2} \zeta(z) - \frac{1}{2} \zeta(z-\frac{1}{3}\omega_{\gamma_0})  \\
  y_{\mathrm{G_2}}(z,\underline{g}) = \frac{1}{2} \zeta(\frac{1}{3}\omega_{\gamma_0}) + \frac{1}{2}  \zeta(\frac{2}{3}\omega_{\gamma_0}) -\frac{1}{2} \zeta(z) -\frac{1}{2} \zeta(z-\frac{1}{3}\omega_{\gamma_0}) +   \zeta(z-\frac{2}{3}\omega_{\gamma_0}) \!\! \!\! \!\! \!\! \!\!
 \end{array}
 \end{equation}
where the $\mathfrak{p}$-function and the $\zeta$-function in RHS are those associated with the period $(\omega_{\gamma_0},\omega_{\gamma_1})\in \widetilde{\mathbb{H}}$, and, hence, with the period lattice $\Omega=\Omega_{\widetilde{L}/\rad(\widetilde{L}),\underline{g}}$ \eqref{eq:periodlattice}.}
\end{theorem}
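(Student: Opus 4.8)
The plan is to identify both sides of each equality in \eqref{eq:fractionalexpansion} as elliptic functions (with respect to the lattice $\Omega=\Omega_{\widetilde{L}/\rad(\widetilde{L}),\underline{g}}$) and then to match their principal parts at all poles in a fundamental domain, so that the difference is a holomorphic elliptic function, hence constant; the constant is then pinned down by comparing the constant terms of the Laurent expansions at $z=0$ computed in \S6. First I would record the divisor data coming from Fact 16: the function $x_{\mathrm{I_2}(p)}(z,\underline{g})$, resp.\ $y_{\mathrm{I_2}(p)}(z,\underline{g})$, has poles exactly at the points $\tfrac{i-1}{[p/2]}\omega_{\gamma_0}+\omega_\gamma$ for $1\le i\le [p/2]$ and $\gamma\in\widetilde{L}/\rad(\widetilde{L})$, with principal part dictated by the leading terms of the series \eqref{eq:Intermediate} (with the single exception that $x_{\mathrm{G_2}}$ is regular at $\tfrac{2}{3}\omega_{\gamma_0}+\omega_\gamma$). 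Both $x_{\mathrm{I_2}(p)}$ and $y_{\mathrm{I_2}(p)}$ are $\Omega$-periodic because the period lattice is exactly $\Omega$ (Fact 13 and the classic statement in \S6 that \eqref{eq:Hamilton-time} uniformizes $\overline{E}_{\mathrm{I_2}(p),\underline{g}}$); in particular these are genuine elliptic functions.

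Next I would treat each type in turn. For $\mathrm{A_2}$: $x_{\mathrm{A_2}}$ has a single double pole at lattice points with principal part $\tfrac14 z^{-2}$ and no residue, so $x_{\mathrm{A_2}}-\tfrac14\mathfrak{p}$ is holomorphic elliptic, hence constant; the constant is $0$ since both have no constant term in their $z$-expansion at $0$ (the $\mathfrak{p}$-function has vanishing constant term by construction and Table~2 shows $x_{\mathrm{A_2}}$ does too). Differentiating gives $y_{\mathrm{A_2}}=\tfrac18\mathfrak{p}'$. For $\mathrm{B_2}$: $x_{\mathrm{B_2}}$ has simple poles at $\omega_\gamma$ with residue $+\tfrac12$ and at $\tfrac12\omega_{\gamma_0}+\omega_\gamma$ with residue $-\tfrac12$ (read off from $\tfrac12\z^{-1}$ and $-\tfrac12\z^{-1}$ in \eqref{eq:Intermediate}), so $x_{\mathrm{B_2}}-\tfrac12\zeta(z)+\tfrac12\zeta(z-\tfrac12\omega_{\gamma_0})$ is elliptic — here one must check that the quasi-periodicity terms $\zeta(z+\omega)-\zeta(z)=\eta(\omega)$ cancel between the two $\zeta$'s, which they do because the two residues are opposite — and holomorphic, hence constant; evaluating at $z=0$ (using $\zeta(z)=z^{-1}+O(z^3)$, so the constant term of $\tfrac12\zeta(z)$ vanishes, while $-\tfrac12\zeta(-\tfrac12\omega_{\gamma_0})=-\tfrac12\zeta(\tfrac12\omega_{\gamma_0})$ by oddness) and comparing with the constant term of $x_{\mathrm{B_2}}$ in Table~2, which is $0$, forces the additive constant to equal $-\tfrac12\zeta(\tfrac12\omega_{\gamma_0})$, i.e.\ $-\tfrac12\zeta(\tfrac12\omega_0)$. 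Similarly $y_{\mathrm{B_2}}$ has double poles $-\tfrac14 z^{-2}$ at $\omega_\gamma$ and $+\tfrac14 z^{-2}$ at $\tfrac12\omega_{\gamma_0}+\omega_\gamma$ with no residues, so $y_{\mathrm{B_2}}+\tfrac14\mathfrak{p}(z)-\tfrac14\mathfrak{p}(z-\tfrac12\omega_{\gamma_0})$ is constant, and matching the constant term (Table~2 gives $\tfrac16 g_s$ for $y_{\mathrm{B_2}}$; one checks $\tfrac14\mathfrak{p}(-\tfrac12\omega_{\gamma_0})=\tfrac14\mathfrak{p}(\tfrac12\omega_{\gamma_0})$ is absorbed into the constant term of $x_{\mathrm{B_2}}$ or handled via the energy relation) shows the added constant is zero once one uses that in this Weierstrass normalization the relevant half-period value of $\mathfrak{p}$ contributes nothing — this bookkeeping is routine but must be done carefully. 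The $\mathrm{G_2}$ case is identical in spirit: $x_{\mathrm{G_2}}$ has simple poles with residue $+\tfrac12$ at $\omega_\gamma$ and $-\tfrac12$ at $\tfrac13\omega_{\gamma_0}+\omega_\gamma$ and is regular at $\tfrac23\omega_{\gamma_0}+\omega_\gamma$, matching $\tfrac12\zeta(z)-\tfrac12\zeta(z-\tfrac13\omega_{\gamma_0})$ up to a constant fixed by $z=0$ comparison to be $-\tfrac16\zeta(\tfrac13\omega_{\gamma_0})-\tfrac16\zeta(\tfrac23\omega_{\gamma_0})$; and $y_{\mathrm{G_2}}$ has simple poles with residues $-\tfrac12,-\tfrac12,+1$ at the three coset representatives (note the residues sum to $0$, so the combination $-\tfrac12\zeta(z)-\tfrac12\zeta(z-\tfrac13\omega_{\gamma_0})+\zeta(z-\tfrac23\omega_{\gamma_0})$ is indeed elliptic), and the additive constant is $\tfrac12\zeta(\tfrac13\omega_{\gamma_0})+\tfrac12\zeta(\tfrac23\omega_{\gamma_0})$ by the same $z=0$ evaluation.

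The main obstacle is the careful verification that each proposed combination of $\zeta$-functions is genuinely $\Omega$-periodic despite the quasi-periodicity $\zeta(z+\omega_j)=\zeta(z)+\eta_j$; this needs the residues at the poles inside a period parallelogram to sum to zero (equivalently, the functions $x$ and $y$ have total residue zero, which follows since they are elliptic), and it needs the specific half- and third-period shifts to be the correct ones — i.e.\ that the poles of $x_{\mathrm{I_2}(p)}$ at $\tfrac{i-1}{[p/2]}\omega_{\gamma_0}+\Omega$ are exactly as predicted by Fact~15's correspondence $\infty_i\leftrightarrow \tfrac{i}{[p/2]}\omega_\beta$. A secondary but genuinely computational point is confirming the sign and value of each residue from the leading Laurent coefficients in \eqref{eq:Intermediate} and then evaluating the $z\to 0$ constant terms consistently with Table~2; this is where one could slip on a factor of $\tfrac12$ or a sign, so I would organize it as a small table of (pole location, residue or principal part, contribution to constant term) for each of the six functions and check the columns against \S6. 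Once periodicity and the constants are nailed down, Liouville's theorem for elliptic functions finishes each identity immediately.
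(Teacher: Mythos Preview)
Your overall strategy---match principal parts modulo $\Omega$, invoke Liouville, then pin down the additive constant---is exactly the paper's approach, and your treatment of the $\mathrm{A_2}$ case and of $x_{\mathrm{B_2}}$ is correct. But there is a genuine gap in how you determine the constants for $y_{\mathrm{B_2}}$ and for both $\mathrm{G_2}$ functions.

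The issue is that the constant terms of the Laurent expansions of $y_{\mathrm{B_2}}$, $x_{\mathrm{G_2}}$, $y_{\mathrm{G_2}}$ at $z=0$ are \emph{not} zero: from Table~2 they equal $\tfrac16 g_s$, $\tfrac12 g_s$, $\tfrac32 g_s$ respectively. So comparing at $z=0$ alone yields, e.g.\ for $y_{\mathrm{B_2}}$, only $B=\tfrac16 g_s-\tfrac14\mathfrak{p}(\tfrac12\omega_{\gamma_0})$, which involves the \emph{unknown} $g_s$ (its expression in terms of period data is precisely what Theorem~\ref{Fractional} is meant to yield). Your sentence ``the added constant is zero once one uses that in this Weierstrass normalization the relevant half-period value of $\mathfrak{p}$ contributes nothing'' is not an argument; it assumes $g_s=\tfrac32\mathfrak{p}(\tfrac12\omega_{\gamma_0})$, which is derived only \emph{after} $B=0$ is established. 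The paper flags this explicitly as ``essentially new feature to be cautious'' and resolves it by invoking the Hamilton equation $\partial_z x_{\mathrm{B_2}}=2y_{\mathrm{B_2}}$: differentiating the already-proved formula for $x_{\mathrm{B_2}}$ forces $B=0$ directly, and then the $z=0$ comparison gives $g_s=\tfrac32\mathfrak{p}(\tfrac12\omega_{\gamma_0})$ as a byproduct. For $\mathrm{G_2}$ the paper uses the Hamilton equation $\partial_z x_{\mathrm{G_2}}=2x_{\mathrm{G_2}}y_{\mathrm{G_2}}+2g_s y_{\mathrm{G_2}}$ together with residue computations at $z=\tfrac13\omega_{\gamma_0}$ and $\tfrac23\omega_{\gamma_0}$ to get an overdetermined linear system for $A,B,g_s$.

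An alternative fix, closer in spirit to your plan, would be to compare constant terms at \emph{all} the poles $\tfrac{i-1}{[p/2]}\omega_{\gamma_0}$ (using the other rows of \eqref{eq:Intermediate}), which does give enough linear equations to solve simultaneously for the additive constants and for $g_s$. But this requires saying so; ``by the same $z=0$ evaluation'' is not enough.
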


\begin{proof}
Owing to \eqref{eq:Intermediate} and  Fact 16, we know already the principal parts of poles of the functions  $ x_{\mathrm{I_2}(p)}(z,\underline{g})$ and $y_{\mathrm{I_2}(p)}(z,\underline{g})$ in $z$. Since
the principal parts of poles of $\mathfrak{p}(z)$, $\mathfrak{p}'(z)$ and $\zeta(z)$ are
$1/(z-\omega)^2$,  $-2/(z-\omega)^3$ and $1/(z-\omega)$ for $\omega \in \Omega_{\mathrm{I_2}(p),\underline{g}}$,  respectively, we see that the sum in the bracket of RHS of  the following \eqref{eq:principal} give meromorphic functions on $z$, whose principal parts of poles coincide with those of $ x_{\mathrm{I_2}(p)}(z,\underline{g})$ and $y_{\mathrm{I_2}(p)}(z,\underline{g})$ in $z$, respectively (here, we denote by $\big[f(z)\big]$ the set of all principal parts of a meromorphic function $f(z)$ defined on $z$-plane).
\begin{equation}
\label{eq:principal}
\begin{array}{lll}
\big[x_{\mathrm{A_2}}(z,\underline{g})\big]=  \big[ \frac{1}{4} \mathfrak{p}(z) \big]  \\
\big[ y_{\mathrm{A_2}}(z,\underline{g}) \big] =   \big[\frac{1}{8} \mathfrak{p}'(z) \big] \\
 \\
  \big[  x_{\mathrm{B_2}}(z,\underline{g}) \big] = \big[ \frac{1}{2} \zeta(z) - \frac{1}{2} \zeta(z-\frac{1}{2}\omega_{\gamma_0}) \big]\\
\big[ y_{\mathrm{B_2}}(z,\underline{g}) \big] =  \big[ -\frac{1}{4} \mathfrak{p}(z)  + \frac{1}{4} \mathfrak{p}(z-\frac{1}{2}\omega_{\gamma_0}) \big]
 \\
 \\
\big[ x_{\mathrm{G_2}}(z,\underline{g}) \big] = \big[ \frac{1}{2} \zeta(z) - \frac{1}{2} \zeta(z-\frac{1}{3}\omega_{\gamma_0}) \big] \\
  \big[ y_{\mathrm{G_2}}(z,\underline{g}) \big] =  \big[ -\frac{1}{2} \zeta(z) -\frac{1}{2} \zeta(z-\frac{1}{3}\omega_{\gamma_0}) +   \zeta(z-\frac{2}{3}\omega_{\gamma_0}) \big] \\
 \end{array}
 \end{equation}

On the other hand, we remark that the functions in the bracket of the RHS of \eqref{eq:principal} are periodic functions w.r.t.\ the period lattice  $\Omega_{\mathrm{I_2}(p),\underline{g}}$, since (1) the functions $\mathfrak{p}(z)$ and $\mathfrak{p}'(z)$ are already periodic, and (2) the sum of coefficients of the linear combinations of the functions of the form $\zeta(z+*)$ in each formula is equal to zero and, then, it is well known that the linear combination is a periodic function (see, e.g.\ \cite{H-C} Ch.1\S12, this follows from an elementary property of zeta function that $\zeta(z+m\omega_0+n\omega_1)-\zeta(z)=m2\zeta(\omega_0/2)+n2\zeta(\omega_1/2)$ for $\omega \in \Omega$). Thus, due to Liouville's Theorem, the difference of meromorphic functions in the brackets of both hand sides of \eqref{eq:principal} are constants. 

Actually, the data of the principal parts of poles are not sufficient to control  the ambiguity of adding constant terms except for the case of type $\mathrm{A_2}$. Namely, the Laurent expansion for the type $\mathrm{A_2}$ at $z=0$ (see Table 2 at the end of \S7) does not have constant terms but those for the other types $\mathrm{B_2}$ and $\mathrm{G_2}$ contain non-trivial constant terms, which are linear in $g_s$ and which are still to be determined from the data of the lattice $\Omega_{\mathrm{I_2}(p)}$. In order to overcome this issue, we use the Hamilton equation \eqref{eq:Hamiltonean} of the motion. This is essentially new feature to be cautious  compared with the classical case of type $\mathrm{A_2}$.

Let us determine the constants depending on the type separately.

\medskip 
\noindent
{\bf $\mathrm{A_2}$ type case:}
Since the constant terms of the Laurent expansions at the origin of BHS are zero, the difference is zero, and we obtain already \eqref{eq:fractionalexpansion} for type $\mathrm{A_2}$. 

\medskip 
\noindent
{\bf $\mathrm{B_2}$ type case:}  
Set, for suitable constants (w.r.t.\ $z$) $A$ and $B$,
\vspace{-0.1cm}
$$
\begin{array}{rcl}
 x_{\mathrm{B_2}}(z,\underline{g}) & = & A+ \frac{1}{2} \zeta(z) - \frac{1}{2} \zeta(z-\frac{1}{2}\omega_{\gamma_0}) \\
 y_{\mathrm{B_2}}(z,\underline{g}) & = & B -\frac{1}{4} \mathfrak{p}(z)  + \frac{1}{4} \mathfrak{p}(z-\frac{1}{2}\omega_{\gamma_0}) 
\end{array}
$$
In the first equality, since the constant terms of the Laurent expansions of  $x_{\mathrm{B_2}}(z,\underline{g})$ and $ \zeta(z)$ are zero (recall \eqref{eq:Intermediate} and the fact that $\zeta(z)$ is an odd function) the sum of the remaining terms $A- \zeta(0-\frac{1}{2}\omega_{\gamma_0})$ is equal to zero. This determine $A=- \zeta(\frac{1}{2}\omega_{\gamma_0})$. 

For the second row of the equality, recall the Hamilton's equation of the motion \eqref{eq:Hamiltonean} {\normalsize $\frac{\partial  x_{\mathrm{B_2}}}{\partial z}$} $= 2 y_{\mathrm{B_2}}$. The LHS is equal to $-\frac{1}{2} \mathfrak{p}(z)  + \frac{1}{2} \mathfrak{p}(z-\frac{1}{2}\omega_{\gamma_0}) $, and substituting $y_{\mathrm{B_2}}$ in the RHS, we see that $2B=0$. These already gives \eqref{eq:fractionalexpansion} for type $\mathrm{B_2}$. Then, by comparing the constant terms of the Laurent expansion of the BHS of \eqref{eq:fractionalexpansion} in view of \eqref{eq:Intermediate}, we obtain $\frac{1}{6}g_s=\frac{1}{4}\mathfrak{p}(0-\frac{1}{2}\omega_{\gamma_0})$. Hence,
\begin{equation}
\begin{array}{rcl}
\label{eq:gsforB2}
A &= & -\zeta(\frac{1}{2}\omega_{\gamma_0})\\
B & = & 0 \\
g_s & = & \frac{3}{2}\ \mathfrak{p}(\frac{1}{2}\omega_{\gamma_0}).
\end{array}
\end{equation}

\medskip 
\noindent
{\bf $\mathrm{G_2}$ type case:}
Set, for suitable constants w.r.t.\ $z$, $A$ and $B$,
\vspace{-0.1cm}
$$
\begin{array}{rcl}
 x_{\mathrm{G_2}}(z,\underline{g}) & = & A+  \frac{1}{2} \zeta(z) - \frac{1}{2} \zeta(z-\frac{1}{3}\omega_{\gamma_0}) \\
 y_{\mathrm{G_2}}(z,\underline{g}) & = & B -\frac{1}{2} \zeta(z) -\frac{1}{2} \zeta(z-\frac{1}{3}\omega_{\gamma_0}) +   \zeta(z-\frac{2}{3}\omega_{\gamma_0}) .
\end{array}
$$
Recall the Hamilton's equation of the motion \eqref{eq:Hamiltonean} 
$$
\frac{\partial  x_{\mathrm{G_2}}}{\partial z}= 2 x_{\mathrm{G_2}}y_{\mathrm{G_2}} + 2 g_s y_{\mathrm{G_2}}. 
$$
The LHS is equal to $-\frac{1}{2} \mathfrak{p}(z)  + \frac{1}{2} \mathfrak{p}(z-\frac{1}{3}\omega_{\gamma_0}) $ so the residue at any pole of LHS is equal to zero. Thus, we obtain two relations that the residues at  at $z=\frac{2}{3}\omega_{\gamma_0}$ and at 
 at $z=\frac{1}{3}\omega_{\gamma_0}$  of the meromorphic function
$$
\begin{array}{cl}
&\!\!\! 2\ \big(A+  \frac{1}{2} \zeta(z) - \frac{1}{2} \zeta(z-\frac{1}{3}\omega_{\gamma_0})\big)
\times
\big(B -\frac{1}{2} \zeta(z) -\frac{1}{2} \zeta(z-\frac{1}{3}\omega_{\gamma_0}) +   \zeta(z-\frac{2}{3}\omega_{\gamma_0}) \big)\\
+&\!\!\!
 2\ g_s \big(B -\frac{1}{2} \zeta(z) -\frac{1}{2} \zeta(z-\frac{1}{3}\omega_{\gamma_0}) +   \zeta(z-\frac{2}{3}\omega_{\gamma_0}) \big)
\end{array}
$$
are zero.

\noindent
(1) Residue at $z=\frac{2}{3}\omega_{\gamma_0}$: \ 
$
2A +   \zeta(\frac{2}{3}\omega_{\gamma_0})  -  \zeta(\frac{2}{3}\omega_{\gamma_0}-\frac{1}{3}\omega_{\gamma_0}) +2g_s=0
$

\noindent
(2) Residue at $z=\frac{1}{3}\omega_{\gamma_0}$: \ 
$ -A -\frac{1}{2}  \zeta(\frac{1}{3}\omega_{\gamma_0}) -B+ \frac{1}{2}  \zeta(\frac{1}{3}\omega_{\gamma_0})  -  \zeta(\frac{1}{3}\omega_{\gamma_0}-\frac{2}{3}\omega_{\gamma_0}) -g_s =0
$

In addition to them, let us consider two more relations: 

\noindent
(3) \quad $ \frac{1}{2}g_s = A - \frac{1}{2} \zeta(0-\frac{1}{3}\omega_{\gamma_0}) $

\noindent
(4) \quad $ \frac{3}{2}g_s = B -\frac{1}{2} \zeta(0-\frac{1}{3}\omega_{\gamma_0}) +   \zeta(0-\frac{2}{3}\omega_{\gamma_0})$

\noindent
obtained by comparing the constant terms of Laurent expansion of the equalities: 
$x_{\mathrm{G_2}} =  A+  \frac{1}{2} \zeta(z) - \frac{1}{2} \zeta(z-\frac{1}{3}\omega_{\gamma_0}) $ 
and 
$y_{\mathrm{G_2}} =B - \frac{1}{2} \zeta(z) -\frac{1}{2} \zeta(z-\frac{1}{3}\omega_{\gamma_0}) +   \zeta(z-\frac{2}{3}\omega_{\gamma_0})$. 

Recalling the fact that $\zeta$ is an odd function, we see that  (1), (2), (3) and (4) are overdetermined system for $A,B$ and $g_s$, and we obtain the solution:  
\begin{equation}
\begin{array}{rcl}
\label{eq:gsforG2}
A & = &- \frac{1}{6}\zeta(\frac{1}{3}\omega_{\gamma_0})  - \frac{1}{6}  \zeta(\frac{2}{3}\omega_{\gamma_0})\\
B & = &  \ \ \frac{1}{2}\zeta(\frac{1}{3}\omega_{\gamma_0}) + \frac{1}{2}  \zeta(\frac{2}{3}\omega_{\gamma_0}) \\
g_s  & = & \ \  \frac{2}{3}  \zeta(\frac{1}{3}\omega_{\gamma_0}) - \frac{1}{3}  \zeta(\frac{2}{3}\omega_{\gamma_0})
\end{array}
\end{equation}

This completes the proof of Theorem 7.1.

\end{proof}

\begin{rem}
\label{abstractinverse}
 In order to get the equality \eqref{eq:fractionalexpansion}, we have substituted the lattice $\Omega$ in the RHS by the period lattice $\Omega_{\mathrm{I_2}(p),\underline{g}}$. However, the expression in the RHS of \eqref{eq:fractionalexpansion} is defined in a self-contained manner for any point $(\omega_0,\omega_1)$ in $\widetilde{\mathbb{H}}$. Therefore, we shall hereafter regard  RHS of \eqref{eq:fractionalexpansion} as meromorphic functions in $z$ which are holomorphically parametrized by $\widetilde{\mathbb{H}}$, where the holomorphicity follows from the compact uniform convergences of the series $\mathfrak{p}$ and $\zeta$ also in the variable $(\omega_0,\omega_1)\in \widetilde{\mathbb{H}}$, regardless whether it is in the image of the period map or not. 
 \end{rem}

\section{ Eisenstein series of type  $\mathrm{A_2, B_2}$ and $\mathrm{G_2}$
\qquad\qquad \qquad\qquad \qquad       {\small - (Primitive automorphic forms)}
}

We come back to the solve the inversion problem posed at Theorem \ref{perioddomain}. For the purpose, we use  some  generalizations of Eisenstein series to obtain inversion maps (see Theorem \ref{primitiveautomorphicform}). For type $\mathrm{A_2}$, this is classically well established theory. Our interest is to show that a generalization of the theory works for types $\mathrm{B_2}$ and $\mathrm{G_2}$  (which is the first main goal of the present paper).

\begin{definition} 
\label{Eisenstein}
For each type $\mathrm{I_2}(p)$, the coefficients of the Laurent series expansion at $z=0$ of the meromorphic functions in RHS of \eqref{eq:fractionalexpansion}, as a weighted homogeneous holomorphic functions on $(\omega_0,\omega_1)\in \widetilde{\mathbb{H}}$, shall be called  Eisenstein series of type $\mathrm{I_2}(p)$. 
\end{definition}

In the following, we determine explicitly all Eisenstein series of type $\mathrm{I_2}(p)$. However,  such explicit description is un-necessary to solve the inversion problem. So, some readers may skip the present paragraph till Theorem \ref{primitiveautomorphicform}. However, the explicit description are unavoidably important in \S10, when we study the Fourier expansions of the polynomials in $\mathbb{C}[g_s,g_l]$ as modular forms.

Set $\Omega:=\mathbb{Z}\omega_0+\mathbb{Z}\omega_1$ for $(\omega_0,\omega_1)\in \widetilde{\mathbb{H}}$. Depending on  $m\in \mathbb{Z}_{\ge3}$ and $a\in \mathbb{R}\omega_0+ \mathbb{R} \omega_1=\mathbb{\R}\otimes_{\mathbb{Z}}\Omega$, let us consider series: 
\begin{equation}
\label{eq:classicalEisenstein}
G_{m}(a)
:=
{\small
\left\{
\begin{matrix}
 \underset{\omega \in \Omega\setminus\{0\}}{ \sum} \omega^{-m}
 &= 
 & \frac{1}{(m-1)!}\frac{d^{m-2}(\mathfrak{p}-z^{-2})}{dz^{m-2}}(0)   
 &\\
&= &-\frac{1}{(m-1)!}\frac{d^{m-1}(\zeta-z^{-1})}{dz^{m-1}}(0) &(\text{if } a\in \Omega)\\
 \underset{\omega \in \Omega}{ \sum}( \omega+a)^{-m} 
 &=
 & \frac{1}{(m-1)!}\frac{d^{m-2}\mathfrak{p}}{dz^{m-2}}(-a) &\\
 &= 
 &-\frac{1}{(m-1)!}\frac{d^{m-1}\mathfrak{\zeta}}{dz^{m-1}}(-a)
 & (\text{if }a\not\in \Omega) \ .\\
\end{matrix}
\right. }
\end{equation}
The first series for $a\in\Omega$ are the classical well-known classical {\it Eisenstein series of weight $m$} (see, e.g. [H-C,E-Z]). However, the second series for $a\notin \Omega$ seem to have not appeared in literature. As we shall see, since both behaves in parallel to the classical series, we shall call the  latter case {\it shifted classical Eisenstein series of weight $m$}.

It is absolute and locally uniformly convergent so that  defines a holomorphic function on $\widetilde{\mathbb{H}}$ of weight $-m\cdot \wt(z)$\footnote
{We should be cautious about the use of the terminology ``weight".  The weight $-m\cdot \wt(z)$ of $G_m(a)$ as a function on $\widetilde{\mathbb{H}}$ comes from the $\mathbb{C}^\times$-action (recall Fact 14).  It is  proportional to the weight $m$ as the Eisenstein series, but depends on the factor $\wt(z)$ which 
depends on type $\mathrm{I_2}(p)$ (recall Table 1) 
 (c.f.\ \eqref{eq:identification} and Table 2).
}
parametrized by $a\in (\mathbb{R}\omega_0+\mathbb{R}\omega_1)/\Omega$, such that
 $G_m(a)=(-1)^mG_m(-a)$.  In particular, we have the relations: 
$$
\begin{array}{rll}
G_m(a)  = 0 \qquad    \text{ for } a\in\frac{1}{2}\Omega \text{ and  } m=odd.  \\
\end{array}
$$

Using \eqref{eq:classicalEisenstein}, one get the following Laurent and Tayler expansions. The first two lines are standard (e.g.\ \cite{H-C}), and the latter two for $a\in\mathbb{R}\otimes\Omega\setminus \Omega$ can be shown similarly.\footnote
{Actually, the last equality has meaning for the periodic variable $a$, even though the zeta function is not periodic, since $\zeta$ is  still ``semi-periodic" (see \cite{H-C}\S11).
}
\begin{equation}
\begin{array}{ccl}
\mathfrak{p}(z) &=&  z^{-2}+\sum_{n=1}^\infty (2n+1) z^{2n}G_{2n+2}(0) \\
\\
\zeta(z) &=&  z^{-1}-\sum_{n=1}^\infty z^{2n+1}G_{2n+2}(0) \\
\\
\mathfrak{p}(z-a) &=& \mathfrak{p}(a)+\sum_{m=1}^\infty (m+1) z^{m}G_{m+2}(a) \\
\\
\mathfrak{\zeta}(z-a) &=&- \zeta(a)-\mathfrak{p}(a)z-\sum_{m=1}^\infty  z^{m+1}G_{m+2}(a) \\

\end{array}
\end{equation}


Now, let us describe Eisenstein series for each type $\mathrm{I_2}(p)$ separately. The calculation is  straight forward from the formula \eqref{eq:fractionalexpansion}, and we omit details of them.

\medskip
\noindent
{\bf $\mathrm{A_2}$ type:}  \  
Set
\begin{equation}
\label{eq:EisensteinA21}
\begin{array}{rcl}
x_{\mathrm{A_2}}(z)  &=  & \ \frac{1}{4}z^{-2} + \sum_{n=1}^\infty A_n z^{2n} \\
 y_{\mathrm{A_2}}(z)  & = &  -\frac{1}{4}z^{-3} +  \sum_{n=1}^\infty B_n z^{2n-1}\\
\end{array}
\end{equation}
Then, we have
\begin{equation}
\label{eq:EisensteinA22}
\begin{array}{ccl}
A_n &= & \frac{2n+1}{4}G_{2n+2}(0)  \quad (n\ge1)\\
B_n &=&  \frac{(2n+1)n}{4}G_{2n+2}(0)  \quad (n\ge1)\\
\end{array}
\end{equation}

\medskip
\noindent
{\bf $\mathrm{B_2}$ type: }\    
Set
\begin{equation}
\label{eq:EisensteinB21}
\begin{array}{rcl}
x_{\mathrm{B_2}}(z)  & =  &  \frac{1}{2} z^{-1}+  \sum_{n=0}^\infty A_n z^{2n+1}  \\
y_{\mathrm{B_2}}(z)  & =  & -\frac{1}{4} z^{-2} + \sum_{n=0}^\infty B_n z^{2n}
\end{array}
\end{equation}
Then, we have
\begin{equation}
\label{eq:EisensteinB22}
\begin{array}{ccl}
A_0 & = &\frac{1}{2}\mathfrak{p}(\frac{1}{2}\omega_0)\\  
\\
A_n & = & -\frac{1}{2}G_{2n+2}(0)+\frac{1}{2}G_{2n+2}(\frac{1}{2}\omega_0) \ \ (n\ge1) \\
\\
B_0 & = & \frac{1}{4}\mathfrak{p}(\frac{1}{2}\omega_0)  \\
\\
B_n & =  &-\frac{2n+1}{4}G_{2n+2}(0)+\frac{2n+1}{4}G_{2n+2}(\frac{1}{2}\omega_0)  \ \ (n\ge1) \\
\end{array}
\end{equation}

\medskip
\noindent
$\mathrm{G_2}$ type:   \ 
Set
\begin{equation}
\label{eq:EisensteinG21}
\begin{array}{rcl}
x_{\mathrm{G_2}}(z)  & = & \ \ \frac{1}{2}z^{-1} +\sum_{n=0}^\infty A_n z^n \\
y_{\mathrm{G_2}}(z) &  = &  -\frac{1}{2} z^{-1}  +  \sum_{n=0}^\infty B_n z^n \\
\end{array}
\end{equation}
Then, we have
\begin{equation}
\label{eq:EisensteinG22}
\begin{array}{ccl}
A_0 &= & \frac{1}{3} \zeta(\frac{1}{3}\omega_{\gamma_0}) -  \frac{1}{6} \zeta(\frac{2}{3}\omega_{\gamma_0}) \ =\ \frac{1}{2} \zeta(\frac{1}{3}\omega_{\gamma_0}) - \frac{1}{3} \zeta(\frac{1}{2}\omega_{\gamma_0})\\
\\
 A_1  &= &\frac{1}{2}\mathfrak{p}(\frac{1}{3}\omega_0)  \\
 \\
 A_n& = & -\frac{1}{2}G_{n+1}(0) + \frac{1}{2}G_{n+1}(\frac{1}{3}\omega_0)  \qquad (n\ge2)\\
 \\
B_0 &= & \zeta(\frac{1}{3}\omega_{\gamma_0}) -  \frac{1}{2} \zeta(\frac{2}{3}\omega_{\gamma_0}) 
\ = \ \frac{3}{2} \zeta(\frac{1}{3}\omega_{\gamma_0}) - \zeta(\frac{1}{2}\omega_{\gamma_0}) \\
\\
B_1 & = & \frac{1}{2}\mathfrak{p}(\frac{1}{3}\omega_0)-\mathfrak{p}(\frac{2}{3}\omega_0) 
\ = \  -\frac{1}{2}\mathfrak{p}(\frac{1}{3}\omega_0)\\
\\
  B_n &=&\frac{1}{2}G_{n+1}(0)+\frac{1}{2}G_{n+1}(\frac{1}{3}\omega_0)-G_{n+1}(\frac{2}{3}\omega_0)  \\
  & = & \frac{1}{2}G_{n+1}(0)+(\frac{1}{2}+(-1)^n)G_{n+1}(\frac{1}{3}\omega_0) 
   \qquad (n\ge2) \\
\end{array}
\end{equation}

\begin{remark}
\label{exceptionalEisenstein}
1.  The infinite sequence  of Eisenstein series for each type $\mathrm{I_2}(p)$ are not algebraically independent.  More precisely, they are obeying recurrence relation \eqref{eq:recurence} in Step 4.\ of the proof of Lemma \ref{eq:LaurentSolution} (which leads to the isomorphism \eqref{eq:Eisenstein}, describing relations directly). The relations may be considered as the $\mathrm{B_2}$-type and $\mathrm{G_2}$-type generalizations of the classically well-known $\mathrm{A_2}$-type relations. However, in the present paper, we do not go into details of the relations.

2. We note that the first Eisenstein series $A_0,B_0$ in case of type $\mathrm{B_2}$ and the first and the second Eisenstein series   $A_0,B_0$ and $A_1,B_1$ in case of type $\mathrm{G_2}$ do not have the description using the classical series \eqref{eq:classicalEisenstein}. These exceptional behavior was caused by the fact that the classical Eisenstein series \eqref{eq:classicalEisenstein} do not converge absolutely in those low weights so that one need to make conditional  convergent series by a help of $\mathfrak{p}$-function or $\zeta$-function. This was made possible by the determination of the constant terms of fractional expansions in Theorem \ref{Fractional} using the energy condition  \eqref{eq:energy} of the Hamilton's equations of the motion.  

In a forthcoming paper \cite{A-S}, we shall study systematically those ``exceptional" Eisenstein series from a view point of modular forms.
\end{remark}

Now we are to formulate the second main theorem of the present paper. The proof is essentially  done already in previous sections so that we have only to coordinate them.

\begin{theorem}
\label{primitiveautomorphicform}
 {\it Consider the pull-back homomorphism $P_{\mathrm{I_2}(p)}^*: \mathcal{O}_{\widetilde{\mathbb{H}}} \to \mathcal{O}_{\widetilde{S}_{\mathrm{I_2}(p)}}$ from the ring of holomorphic functions on the period domain $\widetilde{\mathbb{H}}$ to that on the monodromy covering space $\widetilde{S}_{\mathrm{I_2}(p)}$ of the base space $S_{\mathrm{I_2}(p)}$ of the family \eqref{eq:family} (recall \eqref{eq:covering} for the definition of $\widetilde{S}_{\mathrm{I_2}(p)}$ and  \eqref{eq:map} for the definition of $P_{\mathrm{I_2}(p)}$). 

Then, it induces the ring isomorphism: 
\begin{equation}
\label{eq:Eisenstein}
\mathbb{Q}[\text{Eisenstein series of type $\mathrm{I_2}(p)$}] \quad  \simeq \quad  \mathbb{Q}[g_s,g_l],
\end{equation}
where LHS is the ring over $\mathbb{Q}$ generated by all Eisenstein series of type $\mathrm{I_2}(p)$ (recall Definition \ref{Eisenstein}) and RHS is the coordinate ring of the space $S_{\mathrm{I_2}(p)}$ (recall \eqref{eq:family}) generated by the flat coordinates $g_s$ and $g_l$ over  $\mathbb{Q}$.

In particular, the generators $g_s$ and $g_l$ are expressed by Eisenstein series of type $\mathrm{I_2}(p)$  as follows

\medskip
\noindent
{\bf $\mathrm{A_2}$ type:}  \  
\begin{equation}
\label{eq:A2inversion}
\begin{array}{rcl}
g_s  & = &   \frac{15}{4} \ G_4(0) \\
 g_l  & = &  \frac{35}{16}\ G_6(0) \qquad\qquad  
\end{array}
\end{equation}

\medskip
\noindent
{\bf $\mathrm{B_2}$ type:}  \  
\begin{equation}
\label{eq:B2inversion}
\begin{array}{rcl}
g_s & = & \frac{3}{2} \mathfrak{p}(\frac{1}{2}\omega_0) 
  \\
g_l & = &  \frac{5}{32}\mathfrak{p}^2(\frac{1}{2}\omega_0)+\frac{5}{8} G_4(0)- \frac{5}{8} G_4(\frac{1}{2}\omega_0)     \!\!\!\!\!\!  \!\!\!\!\!\!  \!\!\!\!\!\!  \!\!\!\!\!\!  \!\!\!\!\!\!  \!\!\!\!\!\! \\
\end{array}
\end{equation}

\medskip
\noindent
{\bf $\mathrm{G_2}$ type:}  \  
\begin{equation}
\label{eq:G2inversion}
\begin{array}{rcl}
g_s & = & \zeta(\frac{1}{3}\omega_0)-\frac{2}{3}\zeta(\frac{1}{2}\omega_0) 
  \\
  g_s^2& = & \frac{1}{3}\mathfrak{p}(\frac{1}{3}\omega_0)\\
g_l & = &  2g_s^3-G_3(\frac{1}{3}\omega_0) \\
\end{array}
\end{equation}
}
\end{theorem}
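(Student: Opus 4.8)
The plan is to deduce the ring isomorphism \eqref{eq:Eisenstein} together with the explicit formulas \eqref{eq:A2inversion}, \eqref{eq:B2inversion}, \eqref{eq:G2inversion} from work already assembled: the biholomorphy of the period map (Fact 12, Theorem \ref{perioddomain}), the partial fractional expansions of Theorem \ref{Fractional}, and the Laurent expansions of $x_{\mathrm{I_2}(p)}$, $y_{\mathrm{I_2}(p)}$ at $z=0$ tabulated in \S7. First I would set up the key dictionary: by Theorem \ref{Fractional} and Remark \ref{abstractinverse}, each of the functions $x_{\mathrm{I_2}(p)}(z,\underline g)$, $y_{\mathrm{I_2}(p)}(z,\underline g)$ is, after pulling back by $P_{\mathrm{I_2}(p)}$, a meromorphic function of $z$ whose Taylor/Laurent coefficients at $z=0$ are precisely the Eisenstein series of Definition \ref{Eisenstein}. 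On the other hand, by the very construction \eqref{eq:pfunction}--\eqref{eq:energy}, these same coefficients are weighted-homogeneous \emph{polynomials} in $g_s,g_l$ (Lemma \ref{eq:LaurentSolution}, last sentence, and Table 2). Hence $P_{\mathrm{I_2}(p)}^*$ carries $\mathbb{Q}[g_s,g_l]$ into the ring generated by Eisenstein series, and conversely every Eisenstein series lies in $P_{\mathrm{I_2}(p)}^*\big(\mathbb{Q}[g_s,g_l]\big)$; so the two rings in \eqref{eq:Eisenstein} are literally the same subring of $\mathcal{O}_{\widetilde S_{\mathrm{I_2}(p)}}$, and the only content of the isomorphism claim is that $g_s$ and $g_l$ \emph{themselves} are expressible through Eisenstein series, i.e.\ that the map is onto and injective on the nose.

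The injectivity (that $g_s,g_l$ generate freely, equivalently that distinct $(\omega_0,\omega_1)$ up to $\Gamma_1([p/2])$ give distinct values) is exactly the statement that $P_{\mathrm{I_2}(p)}$ is a biholomorphism $\widetilde S_{\mathrm{I_2}(p)}\simeq\widetilde{\mathbb H}$ asserted in Theorem \ref{perioddomain}; I would either invoke that (it is proved in \S9 via this very inversion) or, to avoid circularity, argue directly that the explicit formulas below already exhibit $g_s,g_l$ as holomorphic functions of $(\omega_0,\omega_1)$ on $\widetilde{\mathbb H}$, which produces the inverse map and closes the loop. For surjectivity I would read off the low-degree Laurent coefficients from Table 2 and match them against the Eisenstein expansions \eqref{eq:EisensteinA22}, \eqref{eq:EisensteinB22}, \eqref{eq:EisensteinG22}. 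In the $\mathrm{A_2}$ case: the $z^2$-coefficient of $x_{\mathrm{A_2}}$ is $\tfrac15 g_s=A_1=\tfrac34 G_4(0)$, giving $g_s=\tfrac{15}{4}G_4(0)$, and the $z^4$-coefficient is $\tfrac47 g_l=A_2=\tfrac54 G_6(0)$, giving $g_l=\tfrac{35}{16}G_6(0)$. In the $\mathrm{B_2}$ case: $A_0=\tfrac12\mathfrak p(\tfrac12\omega_0)$ and the $z$-coefficient $\tfrac13 g_s$ of $x_{\mathrm{B_2}}$ equals $A_0$, hence $g_s=\tfrac32\mathfrak p(\tfrac12\omega_0)$; comparing the $z^3$-coefficient $\tfrac1{18}g_s^2-\tfrac45 g_l=A_1=-\tfrac12 G_4(0)+\tfrac12 G_4(\tfrac12\omega_0)$ and substituting $g_s^2=\tfrac94\mathfrak p^2(\tfrac12\omega_0)$ yields $g_l=\tfrac5{32}\mathfrak p^2(\tfrac12\omega_0)+\tfrac58 G_4(0)-\tfrac58 G_4(\tfrac12\omega_0)$. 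In the $\mathrm{G_2}$ case the constant term $\tfrac12 g_s$ of $x_{\mathrm{G_2}}$ is $A_0=\tfrac12\zeta(\tfrac13\omega_0)-\tfrac13\zeta(\tfrac12\omega_0)$ (using the duplication-type identity $\tfrac13\zeta(\tfrac13\omega_0)-\tfrac16\zeta(\tfrac23\omega_0)=\tfrac12\zeta(\tfrac13\omega_0)-\tfrac13\zeta(\tfrac12\omega_0)$ recorded in \eqref{eq:EisensteinG22}), giving $g_s=\zeta(\tfrac13\omega_0)-\tfrac23\zeta(\tfrac12\omega_0)$; the $z$-coefficient $\tfrac32 g_s^2$ equals $A_1=\tfrac12\mathfrak p(\tfrac13\omega_0)$, so $g_s^2=\tfrac13\mathfrak p(\tfrac13\omega_0)$; and the $z^2$-coefficient $g_s^3-\tfrac12 g_l$ equals $A_2=-\tfrac12 G_3(0)+\tfrac12 G_3(\tfrac13\omega_0)=\tfrac12 G_3(\tfrac13\omega_0)$ (since $G_3(0)=0$, odd weight), giving $g_l=2g_s^3-G_3(\tfrac13\omega_0)$.

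Finally I would note that the equivariance of $P_{\mathrm{I_2}(p)}^*$ under $\Gamma_1([p/2])$ is automatic from Theorem \ref{perioddomain}, so the identifications descend to the quotient and $\mathbb{Q}[g_s,g_l]$ is genuinely identified with a ring of automorphic objects on $\widetilde{\mathbb H}/\Gamma_1([p/2])$. The \textbf{main obstacle} I anticipate is not any single computation but the bookkeeping needed to confirm that the finitely many low-weight matchings above actually suffice to force the \emph{entire} infinite families of Eisenstein series into $\mathbb{Q}[g_s,g_l]$; this is where the recurrence \eqref{eq:recurence} from Step 4 of the proof of Lemma \ref{eq:LaurentSolution} does the real work, since it shows every higher coefficient $A_n,B_n$ is a rational polynomial in $g_s$ and the earlier coefficients, so that once $g_s,g_l$ are in the Eisenstein ring the whole tower collapses to $\mathbb{Q}[g_s,g_l]$ with no further relations beyond those recurrences. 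A secondary subtlety is keeping the $\mathrm{A_2}$ normalization straight against the classical $\mathfrak p$-function, i.e.\ the factor-of-two rescaling $x_{\mathrm{A_2}}=\tfrac14\mathfrak p$, $y_{\mathrm{A_2}}=\tfrac18\mathfrak p'$ recorded in Footnote-level remarks, so that the constants in \eqref{eq:A2inversion} come out matching the classical $g_2=60G_4$, $g_3=140G_6$ conventions after rescaling.
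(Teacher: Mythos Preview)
Your proposal is essentially the same argument as the paper's, carried out with more explicit coefficient-matching. The paper defines the homomorphism by observing that each Eisenstein series pulls back to a Laurent coefficient of $x_{\mathrm{I_2}(p)}$ or $y_{\mathrm{I_2}(p)}$, which by Lemma \ref{eq:LaurentSolution} is a $\mathbb{Q}$-polynomial in $g_s,g_l$; surjectivity is then exactly your low-degree matching (the paper just cites Steps 2--3 of the proof of Lemma \ref{eq:LaurentSolution} rather than redoing the arithmetic).

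One point to tighten: for injectivity you reach for the full biholomorphism of Theorem \ref{perioddomain} and then worry about circularity. The paper sidesteps this cleanly by using only Fact 12: $P_{\mathrm{I_2}(p)}$ is locally biholomorphic, hence an open map between connected manifolds, so any holomorphic function on $\widetilde{\mathbb{H}}$ vanishing on the image of $P_{\mathrm{I_2}(p)}$ vanishes identically. That is all you need for injectivity of $P_{\mathrm{I_2}(p)}^*$ on the Eisenstein ring, and it is logically prior to Theorem \ref{perioddomain}. Your alternative (``the explicit formulas produce the inverse map and close the loop'') is really the argument for Theorem \ref{perioddomain} itself, not for the injectivity of the ring map here.
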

\begin{proof}
The explicit description of the function on $\widetilde{S}_{\mathrm{I_2}(p)}$ corresponding to an Eisenstein series by the pull-back morphism $P_{\mathrm{I_2}(p)}^*$  is obtained by the corresponding coefficient of the Laurent expansion at $z=0$ of the meromorphic functions $x_{\mathrm{I_2}(p)}(z,\underline{g})$ or $y_{\mathrm{I_2}(p)}(z,\underline{g})$ \eqref{eq:fractionalexpansion}. Then, it was already shown in Lemma \ref{eq:LaurentSolution} that they are rational coefficient polynomials in $g_s$ and $g_l$ (c.f.\ Table 2 at the end of \S7). This defines the homomorphism \eqref{eq:Eisenstein} from left to right.

The morphism is injective, since the period map $P_{\mathrm{I_2}(p)}$ \eqref{eq:map} is an open map between connected manifolds (Fact 12). 

The morphism is surjective, since (1) the generator $g_s$ is, up to a constant factor, given by the lowest weight Eisenstein series for each type (recall Step 2. of the proof of Lemma \ref{eq:LaurentSolution} and \eqref{eq:Intermediate}), and (2) the generator $g_l$ appear non-trivially and linearly in the coefficients $A_{n_0}$ and $B_{n_0}$ of the Laurent expansions of $x_{\mathrm{I_2}(p)}(z,\underline{g})$ and $y_{\mathrm{I_2}(p)}(z,\underline{g})$ (recall Step 3. of the proof of Lemma \ref{eq:LaurentSolution} and \eqref{eq:Intermediate}). 
\end{proof}

After the isomorphism \eqref{eq:Eisenstein}, we shall sometimes identify the ring of Eisenstein  series and the polynomial ring in $g_s$ and $g_l$.

\bigskip
\noindent
{\bf Proof of  Theorem \ref{perioddomain}}

We show by 4 steps that the period map $P_{I_2(p)}$  \eqref{eq:periodmap} is bi-holomorphic.

\smallskip
\noindent
Step 1. 
Regardless, whether an element $(\omega_0,\omega_1)\in \widetilde{\mathbb{H}}$ belongs to the image of the period map or not, let us use the Eisenstein series of type $\mathrm{I_2}(p)$ expressions \eqref{eq:A2inversion},\eqref{eq:B2inversion} and \eqref{eq:G2inversion} 
to define a holomorphic map
\begin{equation}
\label{eq:Inverse}
E=(E_s,E_l) \ : \ \widetilde{\mathbb{H}} \ \longrightarrow \ S_{\mathrm{I_2}(p)}.
\end{equation}
The equality \eqref{eq:fractionalexpansion} in Theorem \ref{Fractional} implies that the following diagram is commutative
 \begin{equation}
 \label{eq:Inversion}
 \begin{array}{rccc}
 & \widetilde{S}_{\mathrm{I_2}(p)} \quad & \!\!\!\! \overset{P_{\mathrm{I_2}(p)}}{-\!\!\!\!-\!\!\!\!-\!\!\!\!-\!\!\!\!-\!\!\!\!-\!\!\!\!\longrightarrow} \!\!\!\! &  \widetilde{\mathbb{H}}  \\
                    &\qquad \searrow      \!\!\!\!\!\!      &                     &\!\!\!\!  \swarrow _E  \qquad \\
                    &                                  &      S_{\mathrm{I_2}(p)}       &
\end{array}          
\end{equation}      
We remark  that pull back of the polynomial ring on $S_{\mathrm{I_2}(p)}$ by the morphism $E$ \eqref{eq:Inverse} induces the same isomorphism \eqref{eq:Eisenstein}, since (i) the period map $P_{\mathrm{I_2}(p)}$ is a non-trivial open map, and (ii) any algebraic dependence relation among Eisenstein series on an open domain in $\widetilde{\mathbb{H}}$ automatically extends on the whole
$\widetilde{\mathbb{H}}$ by analytic continuation, since $\widetilde{\mathbb{H}}$  is  {\it connected}.
    
\medskip
\noindent
Step 2.  Let us  show that the image of $E$ is contained in the compliment of the discriminant:  $E( \widetilde{\mathbb{H}})\subset  S_{\mathrm{I_2}(p)} \setminus  D_{\mathrm{I_2}(p)}$.  For $(\omega_1,\omega_2)\in \widetilde{\mathbb{H}}$, using RHS of \eqref{eq:fractionalexpansion}, we define global meromorphic functions $x_{\mathrm{I_2}(p)}$ and $y_{\mathrm{I_2}(p)}$, which are periodic w.r.t. the lattice $\Omega=\mathbb{Z}\omega_0+\mathbb{Z}\omega_1$. Let us see that the pair satisfies the relation \eqref{eq:Hamiltonean} together with \eqref{eq:energy}, where the parameter $\underline{g}$ is given by \eqref{eq:Inverse}.  Actually, the both hand sides give doubly periodic function of the period $\Omega=\mathbb{Z}\omega_0+\mathbb{Z}\omega_1$, where we can check they have the same principal parts of poles, and the constant term of Laurent expansions at 0 coincides.

This means that the time coordinate $z$ is given by the integral \eqref{eq:Hamilton-time} (up to a shift of a constant). That is, the image of the map $(x_{\mathrm{I_2}(p)},y_{\mathrm{I_2}(p)}$ satisfies the equation \eqref{eq:energy}.  However, if $\underline{g}$ belonged to the discriminant, then the associated curve defined by the equation \eqref{eq:equation} is a singular rational curve. The integral  \eqref{eq:Hamilton-time} (avoiding the singularity of the curve but admitting to go through points at infinity) cannot be doubly periodic (either one periodic for $\underline{g}\in D_{\mathrm{I_2}(p)}\! \setminus\! \{0\}$, or no-periodic for $\underline{g}=0$), where as the starting $(\omega_0,\omega_1)\in \widetilde{\mathbb{H}}$ generates rank 2 lattices and $x_{\mathrm{I_2}(p)}$ and $y_{\mathrm{I_2}(p)}$ are doubly periodic. A contradiction! 

\medskip
\noindent
Step 3. 
Let us show that the period map is surjective. Since $\widetilde{\mathbb{H}}$ is connected, for any point $\omega\in \widetilde{\mathbb{H}} $, consider any path, say $p$, in $\widetilde{\mathbb{H}} $ connecting $\omega$ with the image $P_{\mathrm{I_2}(p)}(\widetilde{\Gamma}_{\mathrm{I_2}(p)})$ in $\widetilde{\mathbb{H}}$ of the base point loci of $ \widetilde{S}_{\mathrm{I_2}(p)}$ (recall the definition \eqref{eq:covering}).  Then, the projection image $E(p)$ is a path in $S_{\mathrm{I_2}(p)}\setminus D_{\mathrm{I_2}(p)}$ connecting the base point loci $\Gamma_{\mathrm{I_2}(p)}$ with $E(\omega)$ (recall the commutative diagram \eqref{eq:Inversion}). Then, the monodromy lifting $\widetilde{E(p)}$ of the path $E(p)$ in the covering space $ \widetilde{S}_{\mathrm{I_2}(p)}$ is a path connecting the base point loci $\widetilde{\Gamma}_{\mathrm{I_2}(p)}$ to a point $\widetilde{E(\omega)}$ which lies over $E(\omega)$. Then, the image $P_{\mathrm{I_2}(p)}(\widetilde{E(p)})$ is a monodromy covering  in $\widetilde{\mathbb{H}}$ of the path $E(p)$  connecting $P_{\mathrm{I_2}(p)}(\widetilde{\Gamma}_{\mathrm{I_2}(p)})$ to a point $P_{\mathrm{I_2}(p)}(\widetilde{E(\omega)})$. Since $p$ is also is a monodromy covering  in $\widetilde{\mathbb{H}}$ of the same path $E(p)$  connecting the base point loci $P_{\mathrm{I_2}(p)}(\widetilde{\Gamma}_{\mathrm{I_2}(p)})$ to the point $p$. So the two end points $P_{\mathrm{I_2}(p)}(\widetilde{E(\omega)})$ and $p$ of the paths should coincide each other. In particular, $p$ is in the image of the period map. This shows also that the modular group
$\Gamma_1([p/2])$, which is the monodromy representation $\rho$-image of the fundamental group of $S_{\mathrm{I_2}(p)} \setminus  D_{\mathrm{I_2}(p)}$ (Fact 9, 2.),  acts on any fiber of the map $E$ transitively.  That is, the modular group action quotient of $\widetilde{\mathbb{H}}$ is isomorphic to the discriminant compliment:
$$
 \Gamma_1([p/2]) \diagdown \widetilde{\mathbb{H}} \quad \simeq \quad S_{\mathrm{I_2}(p)}\setminus D_{I_(2)}
 $$

\noindent
Step 4.  
Finally, let us show that the period map is injective. Since the period map is equivariant with the modular group action, it is sufficient that the modular group action on the period domain  $\widetilde{\mathbb{H}} $ is (generically) fixed point free. But this is trivially true, since the modular group is a subgroup of $\mathrm{GL}_2(\mathbb{Z})$ so that its fixed points set is thin and $\widetilde{\mathbb{H}} $ is an open subset of $\mathbb{C}^2$. Since $E$ is  a covering map, if the action modular group is fixed point free in one fiber, it is fixed point free for all fibers.

\medskip
This completes a proof of Theorem \ref{perioddomain}. 
\qquad  \qquad \qquad \qquad \qquad        {$\Box$}
 
 \begin{rem} 1.
\begin{equation}
\label{eq:}
\frac{\partial(g_s,g_l)}{\partial(\omega_0,\omega_1)} \ = \ c \Delta_{\mathrm{I_2}(p)}^{red}
\end{equation}
\end{rem}

2. In \cite{S4,S5} , we posed a general question to describe the inversion morphism to the period map defined by a primitive form.  If a function on the parameter space of the family is described in terms of the coordinates of the period domain, we call the function (and its description on the period domain) a {\it primitive automorphic form}.  In that sense, the generalized Eisenstein series of type $\mathrm{B_2}$  and $\mathrm{G_2}$ in this sections are are the first examples of primitive automorphic forms beyond the classical case of type $\mathrm{A_2}$.

\section{Ring of modular forms and Discriminant}

We identify the ring of Eisenstein series of type $\mathrm{I_2}(p)$ with the ring of modular forms of the congruence group  $\Gamma_1([p/2])$ (see \cite{A-I} for $M_*(\Gamma_1([p/2]))$). Then we confirm that the set of irreducible components of the discriminant of the family \eqref{eq:family} is in one to one correspondence with the set of cusps of the congruence group $\Gamma_1([p/2])$.

\noindent
\begin{theorem}
\label{Eisen-Modular}
 {\it The ring of Eisenstein series of type $\mathrm{I_2}(p)$ is identified with the ring of holomorphic modular forms of the congruence group $\Gamma_1([p/2])$, where the identification is given in following \eqref{eq:identification}. \footnote
{This is naturally an expected result. However, this should have been proven, since the Eisenstein series appeared in the context of the geometry of the period mapping, whereas the modular forms are defined independently by themselves. So, their coincidence is a non-trivial marvelous fact, which we need to work cautiously. 
}
\begin{equation}
\label{eq:EisensteinModular}
\mathbb{C}[\text{Eisenstein series of type $\mathrm{I_2}(p)$}] \quad  \simeq \quad  
M_*(\Gamma_1([p/2])).
\end{equation}
The correspondences of generators are given in \eqref{eq:A2theta}, \eqref{eq:B2theta} and  \eqref{eq:G2theta}.
}
\end{theorem}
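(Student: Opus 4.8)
The plan is to establish the isomorphism \eqref{eq:EisensteinModular} by combining three inputs: the ring-theoretic identification of Eisenstein series with $\mathbb{C}[g_s,g_l]$ from Theorem \ref{primitiveautomorphicform}, the equivariance of the period map $P_{\mathrm{I_2}(p)}$ with respect to $\Gamma_1([p/2])$ from Theorem \ref{perioddomain}, and the explicit structure of $M_*(\Gamma_1([p/2]))$ as determined by Aoki--Ibukiyama \cite{A-I}. First I would observe that, via the biholomorphism $P_{\mathrm{I_2}(p)}:\widetilde{S}_{\mathrm{I_2}(p)}\simeq\widetilde{\mathbb{H}}$ and the $\mathbb{C}^\times$-equivariance of Fact 14, a holomorphic function on $\widetilde{\mathbb{H}}$ which is weighted homogeneous of weight $-m\cdot\wt(z)$ and transforms under $\Gamma_1([p/2])$ by the appropriate automorphy factor corresponds exactly to a modular form of weight $m$; the point is that the $\mathbb{C}^\times$-action on the period domain, written in terms of the coordinate $\tau=\omega_\alpha/\omega_\beta$ (equivalently $\omega_{\gamma_1}/\omega_{\gamma_0}$) and a choice of ``automorphy'' coordinate, is precisely the scaling that turns weighted-homogeneous holomorphic functions on $\widetilde{\mathbb{H}}$ into sections of the relevant line bundle on the modular curve. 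Concretely one uses that $\omega_\beta$ (resp. $\omega_{\gamma_0}$) is itself a weight $\wt(z)$ function, so dividing by an appropriate power of it converts a weight $-m\wt(z)$ function into a genuine weight $m$ automorphic form in the classical normalization.

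Next I would check that each Eisenstein series of type $\mathrm{I_2}(p)$, as an explicit function of $(\omega_0,\omega_1)\in\widetilde{\mathbb{H}}$ given by the formulas \eqref{eq:EisensteinA22}, \eqref{eq:EisensteinB22}, \eqref{eq:EisensteinG22}, is holomorphic on $\widetilde{\mathbb{H}}$, is weighted homogeneous of the stated weight, and is holomorphic at the cusps. Holomorphy on $\widetilde{\mathbb{H}}$ is the locally uniform convergence already noted after \eqref{eq:classicalEisenstein} (including the exceptional low-weight cases, which are finite combinations of special values of $\mathfrak{p}$ and $\zeta$ and hence holomorphic by Remark \ref{abstractinverse}); the transformation law under $\Gamma_1([p/2])$ follows because the Eisenstein series are built from $\mathfrak{p},\zeta$ and their shifts by $\tfrac1{[p/2]}$-torsion points, and the lattice $\Omega$ together with the distinguished torsion point $\tfrac1{[p/2]}\omega_{\gamma_0}$ is preserved precisely by $\Gamma_1([p/2])$ (this is the characterization of the congruence subgroup recalled in the Remark after Fact 9). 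Holomorphy at the cusps is where the work of \cite{A-S} enters: for weights $\ge 3$ one computes the constant term of the Fourier expansion as a ``shift'' of the classical Eisenstein series, evaluating via Riemann $\zeta$ or Dirichlet $L$-values, while for the exceptional weights $\le 2$ of types $\mathrm{B_2},\mathrm{G_2}$ one cites the separate note for the cusp values. Having verified this, the ring homomorphism $\mathbb{C}[\text{Eisenstein series}]\to M_*(\Gamma_1([p/2]))$ is well-defined.

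Then I would show this homomorphism is an isomorphism. Injectivity is immediate: by Theorem \ref{primitiveautomorphicform} the left-hand ring is the polynomial ring $\mathbb{C}[g_s,g_l]$ (extended to $\mathbb{C}$), and a nonzero polynomial relation among the Eisenstein series would contradict the surjectivity onto $\mathbb{C}[g_s,g_l]$ already established there (equivalently, any such relation, being valid on an open subset of $\widetilde{\mathbb{H}}$, extends by connectedness of $\widetilde{\mathbb{H}}$, contradicting Theorem \ref{primitiveautomorphicform}). For surjectivity I would invoke the explicit presentation of $M_*(\Gamma_1(N))$ for $N=1,2,3$ from \cite{A-I}: in each case the ring of modular forms is a polynomial ring (or has an explicitly known small set of generators and relations) generated in low weights, and one matches the generators against the low-weight Eisenstein series listed in \eqref{eq:A2inversion}--\eqref{eq:G2inversion}. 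A clean way to organize this is by comparing Hilbert--Poincar\'e series: both rings are finitely generated graded rings, the map preserves grading (after the weight normalization above), it is injective, and the two graded dimension counts agree weight-by-weight by the Aoki--Ibukiyama formulas, forcing the map to be surjective in each degree. This matching of generators is recorded in the formulas \eqref{eq:A2theta}, \eqref{eq:B2theta}, \eqref{eq:G2theta} referenced in the statement, and producing those explicit identifications --- in particular pinning down the correct scalar constants by comparing Fourier/Laurent constant terms --- is the main computational obstacle; the conceptual content, by contrast, is essentially forced once one knows both that the period map is an equivariant biholomorphism and that \cite{A-I} computes the target ring.
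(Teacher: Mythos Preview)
Your proposal is correct and follows essentially the same route as the paper: set up the identification \eqref{eq:identification} between weighted-homogeneous $\Gamma_1([p/2])$-invariant functions on $\widetilde{\mathbb{H}}$ and modular forms, verify holomorphy at the cusps (deferring the low-weight cases to \cite{A-S}), invoke Aoki--Ibukiyama for the target ring, and then match generators. The only cosmetic difference is that you package surjectivity via a Hilbert--Poincar\'e series comparison, whereas the paper argues directly by noting that both rings are polynomial in two generators whose weights coincide (Table~4) and then pins down the exact linear combinations \eqref{eq:A2theta}--\eqref{eq:G2theta} by evaluating at the cusps (Tables~2--5); these are equivalent once one knows both rings are polynomial.
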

\begin{proof} Proof of the theorem is divided into Steps 1-5.

\noindent
{Step 1.} 
We explain the meaning of ``identification", and fix notation. 

Recall the period domain $\widetilde{\mathbb{H}}:=\{(\omega_0,\omega_1)\in\mathbb{C}^2\mid \mathrm{Im}(\omega_1/\omega_0)>0\}$ with its homogenous coordinates $(\omega_0,\omega_1)$. We introduce the inhomogeneous coordinate
\begin{equation}
\label{eq:tau}
 \tau \quad  :=\quad \omega_1/\omega_0
 \end{equation}
 Then the natural projection $\widetilde{\mathbb{H}}\to \mathbb{H}:=\{\tau\in\mathbb{C}\mid \mathrm{Im}(\tau)>0\}$, $(\omega_0,\omega_1)\mapsto \tau:=\omega_1/\omega_0$ gives a principal $\mathbb{C}^\times$-bundle, say $(L^{\times})^{ -1}$, over $\mathbb{H}$,  which we trivialize by the morphism $\widetilde{\mathbb{H}}\simeq \mathbb{C}^\times \mathbb{H}, \ (\omega_0,\omega_1)\mapsto (\omega_0,\tau)$. The modular group $\Gamma_1([p/2])$ acts from the left on $\widetilde{\mathbb{H}}$ and hence on $L$. For $k\in\mathbb{Z}_{>0}$, a holomorphic section of the $L^k$, say $s=s(\tau)$, such that {\footnotesize $\gamma^*(s):=s\cdot \frac{a\tau+b}{c\tau+d}$} is equal to $(c\tau+d)^ks(\tau)$ for 
$\gamma=
\begin{bmatrix}
a & b\\
c & d
\end{bmatrix}
\in \Gamma_1([p/2])
$,
is called a modular form of weight $k$ of $\Gamma_1([p/2])$ in a wide sense. Then, the correspondence $s(\tau) \ \mapsto \ \omega_0^{-k} \cdot s(\tau)$ defines the ``identification": 

{\footnotesize
\begin{equation}
\label{eq:identification}
\begin{array}{cl}
&  \{\text{modular forms of weight $k$ of the  group $\Gamma_1([p/2])$ in a wide sense}\}  \\
\leftrightarrow  & \{\text{holomorphic functions on $\widetilde{\mathbb{H}}$ of weight $-k\cdot \wt(z)$ invariant by $\Gamma_1([p/2])$}\} 
\end{array}
  \end{equation}
}

Actually, we study more restricted class of modular forms which are holomorphic and taking finite values at cusps, as we explain now.

Recall \cite{Ko} that a point $x\in\mathbb{R}\cup\{\sqrt{-1}\infty\}\ (=\partial\mathbb{H})$ is called a cusp of $\Gamma_1(N)$ if it is fixed by a hyperbolic element of $\Gamma_1(N)$. The isotropic subgroup of $\Gamma_1(N)$ fixing a cusp is an infinite unipotent group, and the set of all cusps are invariant under the action of $\Gamma_1(N)$. 
%
%
For $\gamma =
\begin{bmatrix}
a & b\\
c & d
\end{bmatrix}\in \mathrm{SL_2}(\mathbb{Z})$, 
set $(s|_k\gamma)(\tau):=(\gamma^*s)(\tau)(c\tau+d)^{-k}$ (so that the modularity property of $s$ is equivalent to $s|_k\gamma=s$ for all $\gamma\in \Gamma_1(N)$). 
Let $\gamma(x)=\sqrt{-1}\infty$ for a cusp $x$ and $\gamma\in \mathrm{SL_2}(\mathbb{Z})$. For a modular form $s$ (in the wide sense), $s|_k\gamma$, as a periodic function in $\tau$, develops into a Fourier series in $\tau$. 
Then, $s$ is called a {\it holomorphic} modular form if the Fourier series consists only of non-negative powers of $q=\exp{(2\pi\sqrt{-1}\tau)}$ at all cusps of $\Gamma_1(N)$. The constant term of the Fourier series is called the {\it value} of $s$ at the cusp and denoted by $(s|_k\gamma)(\sqrt{-1}\infty)$.

\smallskip
\noindent
{Step 2.} We recall the results by  Aoki and Ibukiyama on the ring of modular forms.  In \cite{A-I}, Aoki and Ibukiyama gave a simple unified description of the graded ring of holomorphic   modular forms of $\Gamma_0(N)$ for $N=1,2,3,4$. From that description, we recover easily the ring $M_*(\Gamma_1(N))$ of holomorphic modular forms of $\Gamma_1(N)$.  Namely, according as $N=1,2$ and $3$, the ring is generated by two (algebraically independent) modular forms $e_4, e_6$ of weight 4 and 6, $\alpha_2, \beta_4$ of weight 2 and 4,  and $\alpha_1, \beta_3$ of weight 1 and 3, respectively. That is, 
\begin{equation}
\label{eq:MGamma}
M_*(\Gamma_1(1))=\mathbb{C}[e_4,e_6],\ M_*(\Gamma_1(2))=\mathbb{C}[\alpha_2,\beta_4],\ M_*(\Gamma_1(3))=\mathbb{C}[\alpha_1,\beta_3]
\end{equation}
where explicit descriptions of the generators as theta-function, in particular, the first few Fourier coefficients at the cusps are given in \cite{A-I}. 

\medskip
\noindent
{Step 3.}
We define a morphism from the left hand side to the right hand side of \eqref{eq:EisensteinModular}.
This is achieved by showing that the Eisenstein series are holomorphic at all cusps.

More explicitly, recall that, according as $N=1,2$ and 3, the number of $\Gamma_1(N)$-equivalence classes of cusps are 1, 2 and 2, whose representatives are given as follows.
{\small 
$$
\text{$\mathrm{A_2}$ type: } \sqrt{-1}\infty,\ \ 
\text{$\mathrm{B_2}$ type: } \sqrt{-1}\infty \text{ and }  0, \ \ 
\text{$\mathrm{G_2}$ type: } \sqrt{-1}\infty \text{ and }  0.
$$ 
}
Let us consider the ring of Eisenstein series of type $\mathrm{I_2}(p)$. Recall that the identification \eqref{eq:Eisenstein} was given by the composition with the period map $P_{\mathrm{I_2}(p)}$, where the period map is $\Gamma_1([p/2])$-equivariant. This means that any Eisenstein series is a $\Gamma_1([p/2])$-invariant function on $\widetilde{\mathbb{H}}$. So, by the identification \eqref{eq:identification}, it gives arise a modular form.
If we, further, show that the Eisenstein series are holomorphic at all cusps, we obtain a graded ring homomorphism from the left hand side to the right hand side of \eqref{eq:EisensteinModular}. 

\begin{lem}
{\it The modular forms associated with Eisenstein series of type $\mathrm{I_2}(p)$ ($p=1,2,3$) are holomorphic at their cusp(s).}
\end{lem}
\begin{proof} 
Let $E=$
{\tiny
$\begin{bmatrix}
1 & 0 \\
0 & 1
\end{bmatrix}
$}
and
$S=$
{\tiny
$\begin{bmatrix}
0 & 1\\
-1 & 0
\end{bmatrix}
$
}
$\! \in \mathrm{SL_2}(\mathbb{Z})$, which transforms {\small $\sqrt{-1}\infty$} and 0 to {\small $\sqrt{-1}\infty$}. 
We  show that the series associated with $s|_kE$ and/or $s|_kS$ for an Eisenstein series $s$ with the weight $k$ converges absolute uniformly in a ``neighborhood" of $\sqrt{-1}\infty$.  This is classical for the case of Eisenstein series $s=G_m(0)$ for $m\ge3$ (see \cite{F-B}), and similar proof works for Eisenstein series of the form $s=G_m(a)$ for a suitable $a\in \Omega_{\mathbb{Q}}$. In case of Eisenstein series of the form $s=\mathfrak{p}(a)$ for a suitable $a\in \Omega_{\mathbb{Q}}$ in type $\mathrm{B_2}$ and $\mathrm{G_2}$, we may either show directly the  convergence in \cite{A-S}, or alternatively, we use the expression of the $\mathfrak{p}$-function as a proportion of Jacobi forms (\cite{E-Z} Theorem 3.6) to show that the Fourier expansions at infinity consists only of positive powers. In case of  $s=\zeta(\frac{1}{3}\omega_0)-\frac{2}{3}\zeta(\frac{1}{2}\omega_0)$, it will be shown in \cite{A-S}.
\end{proof}

\medskip
\noindent
{Step 4.}
In the following table, we give the values at cusps of the additive summands in \eqref{eq:A2inversion}, \eqref{eq:B2inversion} and \eqref{eq:G2inversion}. 
Calculations depend on cases: In case of the form $G_m(0)$, it is classical (e.g.\ \cite{B-G-H-Z, F-B}).  In case of the form $\mathfrak{p}(a)$ for some $a\in \Omega_{\mathbb{Q}}$ (see \cite{A-S}). The cases of shifted series $G_4(\frac{1}{2}\omega_0)$ and 	$G_3(\frac{1}{3}\omega_0)$ at the cusp $\sqrt{-1}\infty$ are reduced to Riemann's zeta function $\zeta_R(4)$ or Dirichlet's L-function $L(3,\chi)$ of the character $\chi$ given by quadratic residues, respectively (see \cite{A-K-I} for explicit values). Their values at the cusp 0 are directly shown to be zero. In case of $\zeta(\frac{1}{3}\omega_0)-\frac{2}{3}\zeta(\frac{1}{2}\omega_0)$, 
 it will be shown in \cite{A-S}.

\smallskip
\noindent
{\bf $\mathrm{A_2}$ type:}   
\vspace{-0.1cm}
{\footnotesize
$$
\label{eq:A2cusp}
\begin{array}{lll}
\big(\omega_0^4 \cdot G_4(0)|_4E\big)(\sqrt{-1} \infty)  =  \frac{\pi^4}{45}&&
 \big(\omega_0^6\cdot G_6(0)|_6E\big)(\sqrt{-1} \infty) =  \frac{2 \pi^6}{945}
\end{array}
$$
}
\noindent
{\bf $\mathrm{B_2}$ type:}   
\vspace{-0.1cm}
{\footnotesize
$$
\begin{array}{lll}
\big(\omega_0^2\cdot  \mathfrak{p}(\frac{1}{2}\omega_0)|_2E\big)(\sqrt{-1} \infty)  =   \frac{2\pi^2}{3}
&&
\big(\omega_0^2 \cdot \mathfrak{p}(\frac{1}{2}\omega_0)|_2S\big)(\sqrt{-1} \infty)  =   - \frac{\pi^2}{3} 
\\
\\
\big(\omega_0^4 \cdot \mathfrak{p}^2(\frac{1}{2}\omega_0)|_4E\big)(\sqrt{-1} \infty)  =   \frac{4\pi^4}{9} 
&&
\big(\omega_0^4\cdot \mathfrak{p}^2(\frac{1}{2}\omega_0)|_4S\big)(\sqrt{-1} \infty)  =    \frac{\pi^4}{9}\\
\\
\big(\omega_0^4\cdot G_4(0)|_4E\big)(\sqrt{-1} \infty)  =  \frac{\pi^4}{45}
&&
\big(\omega_0^4\cdot G_4(0)|_4S\big)(\sqrt{-1} \infty)  =   \frac{\pi^4}{45}
\\
\\
\big(\omega_0^4\cdot G_4(\frac{1}{2}\omega_0)|_4E\big)(\sqrt{-1} \infty)  =   \frac{\pi^4}{3} 
&&
\big(\omega_0^4\cdot G_4(\frac{1}{2}\omega_0)|_4S\big)(\sqrt{-1} \infty)  =   0 \\
\end{array}
$$
}

\noindent
{\bf $\mathrm{G_2}$ type:}  
\vspace{-0.1cm}
{\footnotesize
$$
\begin{array}{ll}
\big(\omega_0 \cdot (\zeta(\frac{1}{3}\omega_0)-\frac{2}{3}\zeta(\frac{1}{2}\omega_0))|_1E\big)(\sqrt{-1} \infty) 
&\!
\big(\omega_0 \cdot (\zeta(\frac{1}{3}\omega_0)-\frac{2}{3}\zeta(\frac{1}{2}\omega_0))|_1S\big)(\sqrt{-1} \infty) \\
 \vspace{-0.2cm}
 =\  \frac{\pi}{\sqrt{3}} & =  \  - \sqrt{-1} \frac{\pi}{3}
\\
\\
\big(\omega_0^2\cdot \mathfrak{p}(\frac{1}{3}\omega_0)|_2E\big)(\sqrt{-1} \infty)  =    \pi^2
&
\big(\omega_0^2\cdot \mathfrak{p}(\frac{1}{3}\omega_0)|_2S\big)(\sqrt{-1} \infty)  =    -\frac{\pi^2}{3}
\\
\\
\big(\omega_0^3\cdot G_3(\frac{1}{3}\omega_0)|_3E\big)(\sqrt{-1} \infty)  =   \frac{2^2\pi^3}{3\sqrt{3}}
&
\big(\omega_0^3\cdot G_3(\frac{1}{3}\omega_0)|_3S\big)(\sqrt{-1} \infty)  =    0\\ 
\end{array}
$$
}
\vspace{-0.15cm}
\centerline{\rm\large  Table 2:\quad {\normalsize Values of Eisenstein series of type $\mathrm{I_2}(p)$ at cusps, I}}

\bigskip
This completes a proof of Theorem \ref{Eisen-Modular}
\end{proof}

\begin{remark}
In case of type $\mathrm{A_2}$, Fourier coefficients of Eisenstein series are well-known to be given by the divisor sum function $\sigma_p(n)$ with suitable constant factors given by special values of Riemann's zeta function (e.g.\ \cite{F-B}VII.1.3). It is natural to ask for similar expressions of the Fourier coefficients of Eisenstein series for the types  $\mathrm{B_2}$ and  $\mathrm{G_2}$.
\end{remark}

Combining above calculations with the expressions \eqref{eq:A2inversion}, \eqref{eq:B2inversion} and \eqref{eq:G2inversion}, we obtain the following table,

\medskip
\noindent
{\bf $\mathrm{A_2}$ type:}  \  
{\footnotesize
$$
\begin{array}{ll}
 \big(\omega_0^4\cdot g_s|_4E \big)(\sqrt{-1} \infty)=\frac{1}{2^2 3} \pi^4,  \\

 \big( \omega_0^6 \cdot g_l |_6E \big)(\sqrt{-1} \infty)=\frac{1}{2^3 3^3} \pi^6,  \\
\end{array}
$$
}

\noindent
{\bf $\mathrm{B_2}$ type:}  \  
{\footnotesize
$$
\begin{array}{ll}
 \big( \omega_0^2\cdot g_s|_2E \big)(\sqrt{-1} \infty)=\pi^2,  &\big(\omega_0^2\cdot  g_s|_2S \big)(\sqrt{-1}\infty)=-\frac{1}{2}\pi^2, \\

 \big( \omega_0^4\cdot g_l|_4E \big)(\sqrt{-1} \infty)=-\frac{1}{2^3}\pi^4, & \big( \omega_0^4\cdot g_l|_4S \big)(\sqrt{-1}\infty)=\frac{1}{2^5}\pi^4, \\
\end{array}
$$
}

\noindent
{\bf $\mathrm{G_2}$ type:}  \  
{\footnotesize
$$
\begin{array}{ll}
\big( \omega_0 \cdot g_s|_1E \big)(\sqrt{-1} \infty)= \frac{\pi}{\sqrt{3}},  & \big(\omega_0 \cdot  g_s|_1S \big)(\sqrt{-1}\infty)= - \frac{\sqrt{-1}}{3}\pi, \\

 \big( \omega_0^3\cdot g_l|_3E \big)(\sqrt{-1} \infty)= \frac{2}{3\sqrt{3}}\pi^3, &\big( \omega_0^3\cdot g_l |_3S \big)(\sqrt{-1}\infty)= \frac{ 2\sqrt{-1}}{3^3}\pi^3. \\
\end{array}
$$
}

\centerline{\rm\large  Table 3:\quad {\normalsize Values of Eisenstein series of type $\mathrm{I_2}(p)$ at cusps, II}}

\smallskip
\noindent
{Step 5.} This is the final step to obtain the isomorphism \eqref{eq:EisensteinModular}. We determine the linear relations between the generators of both hand sides by comparing their values at cusps.

First, we compare the weights (of the free generators of) the rings in both hand sides of \eqref{eq:EisensteinModular} by the use of ``weight factor" $\wt(z)$ (recall \eqref{eq:identification}). Comparing Table 1 in \S2 and the description of \eqref{eq:MGamma}, we obtain the following ``{\it coincidences}" of weights!
\vspace{0.3cm}

\begin{tabular}{r| c c  | cc}
         &  $-\wt(g_s)/\wt(z)$, & $-\wt(g_l)/\wt(z)$ &\text{weights of } $M_*(\Gamma_1([p/2]))$  \\
\hline         
$\mathrm{A_2}$  \! &       -(2/3)/(-1/6)= 4   ,   &      -1/(-1/6)=6       & 4 \ , \quad  6 \\
$\mathrm{B_2}$  \! &         -(1/2)/(-1/4)=2  ,   &   -1/(-1/4)=4        &  2\ , \quad 4  \\
$\mathrm{G_2} $ \! &       -(1/3)/(-1/3)=1  ,  &\!    -1/(-1/3)=3     &   1\ , \quad 3 \\
\end{tabular}

\smallskip
\centerline{\rm\large  Table 4:\quad {\normalsize Weights of the generators of the ring of modular forms}}

 \smallskip
This means that  each generators $g_s$ and $g_l$ are mapped into the graded vector subspace of $M_*(\Gamma_1([p/2]))$ of the same degree as the corresponding generators $\alpha_i$, $\beta_j$ etc., respectively. For $g_s$, the dimension of the graded subspace containing it is equal to 1 so that we only need to fix the constant factor. In case of $g_l$, the dimension is either 1 for type $\mathrm{A_2}$ or 2, spanned by $g_l$ and a power of $g_s$, for types $\mathrm{B_2}$ and $\mathrm{G_2}$. 

Let us recall the values of the  generators of the modular forms at cusps. The following values are taken from \cite{A-I} Internat J. Math. 16-3(2005) 249-279, $B_2$: p.270
$B_3$ pp.271-272.

$$
\begin{array}{cc}
(e_4|_2E)(\sqrt{-1} \infty)=1,  \\
(e_6|_4E)(\sqrt{-1} \infty)=1,  \\
\\
(\alpha_2|_2E)(\sqrt{-1} \infty)=1,  &(\alpha_2|_2S)(\sqrt{-1}\infty)=-1/2, \\
(\beta_4|_4E)(\sqrt{-1} \infty)=0, & (\beta_4|_4S)(\sqrt{-1}\infty)=1/256, \\
\\
(\alpha_1|_1E)(\sqrt{-1} \infty)=1,  & (\alpha_1|_1S)(\sqrt{-1}\infty)=-\sqrt{-1} /\sqrt{3}, \\
(\beta_3|_3E)(\sqrt{-1} \infty)=1, &(\beta_3|_3S)(\sqrt{-1}\infty)=-\sqrt{-1} /3\sqrt{3}. \\
\end{array}
$$
\centerline{\rm\large  Table 5:\quad {\normalsize Values of Modular forms of $\Gamma_1([p/2])$ at cusps}}

\medskip
Comparing Table 3 with Table 5, we obtain the following expressions of the isomorphism \eqref{eq:EisensteinModular}.

\medskip
\noindent
{\bf $\mathrm{A_2}$ type:}  \  
\begin{equation}
\label{eq:A2theta}
\begin{array}{rcl}
g_s  & = &  \  \frac{1}{2^23} \pi^4 \ e_4\ \omega_0^{-4} \quad  \\
 g_l  & = &  \frac{1}{2^33^3} \pi^6 \ e_6 \  \omega_0^{-6}  
\end{array}
\end{equation}

\noindent
{\bf $\mathrm{B_2}$ type:}  \  
\begin{equation}
\label{eq:B2theta}
\begin{array}{rcl}
\qquad \qquad g_s & = & \ \  \pi^2 \ \al_2 \ \omega_0^{-2} 
  \\
g_l & = &   2^4 \pi^4 \ \beta_4 \ \omega_0^{-4}  \ - \  \frac{1}{2^3} \pi^4 \ (\alpha_2\ \omega_0^{-2} )^2
\end{array}
\end{equation}

\noindent
{\bf $\mathrm{G_2}$ type:}  \  
\begin{equation}
\label{eq:G2theta}
\begin{array}{rcl}
g_s & = &  \ \ \  \frac{1}{\sqrt{3}} \pi \ \ \alpha_1 \ \omega_0^{-1}
  \\
g_l & = &  -\frac{2}{3\sqrt{3}} \pi^3 \ \beta_3 \ \omega_0^{-3} 
\end{array}
\end{equation}

\begin{remark}
As a consequence of the identification \eqref{eq:EisensteinModular}, we observe that the holomorphicity condition at cusps on modular forms are equivalent to the holomorphic extendability condition on functions on $S_{\mathrm{I_2}(p)}\setminus D_{\mathrm{I_2}(p)}$ to holomorphic functions on $S_{\mathrm{I_2}(p)}$. Then the cusp form condition ``should be" equivalent to the condition to vanish on the discriminant $D_{\mathrm{I_2}(p)}$. This is the subject discussed in the following lemma. 
\end{remark}

\bigskip
\noindent
\begin{lem}
\label{cusp-disc}
{\it The set of  $\Gamma_1([p/2])$-equivalence classes of cusp points   corresponds naturally in one to one  with the set of irreducible components of the discriminant loci $D_{\mathrm{I_2}(p)}$ \eqref{eq:discriminantloci}.
}
\end{lem}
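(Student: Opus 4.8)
The plan is to exhibit the correspondence explicitly in both directions and then match the two sides by a counting/degree argument together with the structure already established. First I would recall the concrete data: for each type $\mathrm{I_2}(p)$ the discriminant $\Delta_{\mathrm{I_2}(p)}$ has, by \eqref{eq:discriminant}, exactly two irreducible components — namely the two factors $(cg_l+g_s^{p/2})$ and $(-cg_l+g_s^{p/2})$ (with the appropriate multiplicities $1,2,3$), where $c=\sqrt{27},8,1/2$ — except that for type $\mathrm{A_2}$ the two ``branches'' are genuinely conjugate over $\mathbb{Q}$ and form a single irreducible component over $\mathbb{Q}$, matching the single cusp $\sqrt{-1}\infty$ of $\Gamma_1(1)$. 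For types $\mathrm{B_2}$ and $\mathrm{G_2}$ there are two components, matching the two $\Gamma_1([p/2])$-orbits of cusps listed in Step 3 of the proof of Theorem \ref{Eisen-Modular}, i.e.\ $\{\sqrt{-1}\infty, 0\}$. So the statement is really a bijection between a $2$-element set (or $1$-element set for $\mathrm{A_2}$) and another set of the same cardinality; the content is to produce the \emph{natural} map realizing it.

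Next I would construct the map geometrically. By Theorem \ref{perioddomain} we have a biholomorphism $P_{\mathrm{I_2}(p)}:\widetilde S_{\mathrm{I_2}(p)}\xrightarrow{\sim}\widetilde{\mathbb H}$ descending, by Step 3 of that proof, to $\Gamma_1([p/2])\backslash\widetilde{\mathbb H}\simeq S_{\mathrm{I_2}(p)}\setminus D_{\mathrm{I_2}(p)}$; equivalently $\Gamma_1([p/2])\backslash\mathbb H$ compactified by adding the cusps maps onto $S_{\mathrm{I_2}(p)}/\mathbb C^\times$ compactified by the projectivized discriminant. Concretely the compactified modular curve $X_1([p/2])=\Gamma_1([p/2])\backslash\mathbb H^*$ is $\mathbb P^1$ (since the genus is $0$ for $[p/2]=1,2,3$), and the weighted-projective quotient of $S_{\mathrm{I_2}(p)}$ by the $\mathbb C^\times$-action is also $\mathbb P^1$ with coordinate, say, $J:=g_s^{p/2}/g_l$ (or its reciprocal); the period map induces an isomorphism of these two $\mathbb P^1$'s sending the cusps to the finitely many points over which the family degenerates, i.e.\ the points of $\P^1$ lying under $D_{\mathrm{I_2}(p)}$. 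This is where I would use the explicit cusp values computed in Table 3 (``Values of Eisenstein series of type $\mathrm{I_2}(p)$ at cusps, II''): plugging $(\omega_0^{\bullet}g_s,\omega_0^{\bullet}g_l)$ evaluated at $E$ (the cusp $\sqrt{-1}\infty$) and at $S$ (the cusp $0$) into the defining factors of $\Delta_{\mathrm{I_2}(p)}$ shows that the combination $cg_l+g_s^{p/2}$ vanishes at exactly one of the two cusps and $-cg_l+g_s^{p/2}$ at the other — hence each irreducible component of $D_{\mathrm{I_2}(p)}$ is the closure of the locus mapped to by a unique cusp orbit. For $\mathrm{A_2}$ one checks that both $\mathbb Q$-conjugate branches degenerate at the single cusp, consistently.

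I would then package this as follows. Define the map from cusps to components by: a cusp $x$ of $\Gamma_1([p/2])$ corresponds to the component of $D_{\mathrm{I_2}(p)}$ which is the Zariski closure of the image under $E$ (equivalently $P_{\mathrm{I_2}(p)}^{-1}$, extended over the cusp via $q$-expansions) of a punctured neighbourhood of $x$; injectivity and surjectivity follow because $P_{\mathrm{I_2}(p)}$ is an isomorphism onto the complement of $D_{\mathrm{I_2}(p)}$ and extends to an isomorphism of the two $\mathbb P^1$-compactifications, so distinct cusp orbits go to distinct boundary points of $S_{\mathrm{I_2}(p)}/\mathbb C^\times$, i.e.\ to distinct irreducible components (each component, being $\mathbb C^\times$-invariant of codimension $1$ and passing through $0$, is determined by its single point in $\P^1$). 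The naturality is with respect to the $\mathbb C^\times$-action and the monodromy, both already built into $P_{\mathrm{I_2}(p)}$.

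The main obstacle I anticipate is the bookkeeping at the cusp $0$ for types $\mathrm{B_2}$ and $\mathrm{G_2}$: one must check that the $q$-expansion of the slashed Eisenstein series $s|_kS$ really does make the corresponding factor of $\Delta_{\mathrm{I_2}(p)}$ vanish there (and the other factor not vanish), i.e.\ verify $cg_l\mp g_s^{p/2}=0$ at each cusp using precisely the constants in Table 3 — a short but genuinely case-by-case computation — and, for type $\mathrm{A_2}$, to argue cleanly that the single cusp corresponds to the single $\mathbb Q$-irreducible component even though over $\overline{\mathbb Q}$ it splits; this I would phrase via Galois descent, noting the period construction and the coefficients $g_s,g_l$ are defined over $\mathbb Q$ and the two analytic branches are swapped by complex conjugation acting on $\sqrt{27}$. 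A secondary subtlety is confirming that the period map, a priori defined only on $S_{\mathrm{I_2}(p)}\setminus D_{\mathrm{I_2}(p)}$, extends to the boundary $\P^1$-compactification; but this follows from the standard fact that holomorphic modular forms have finite values at cusps (Step 1--4 of Theorem \ref{Eisen-Modular}), so $g_s,g_l$ extend and the extended $J$-map is the required isomorphism of $\P^1$'s.
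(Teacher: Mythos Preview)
Your approach is essentially the same as the paper's: both establish the bijection by checking, case by case, that each irreducible factor of $\Delta_{\mathrm{I_2}(p)}$ vanishes at exactly one $\Gamma_1([p/2])$-orbit of cusps, using the explicit cusp values of $g_s,g_l$ recorded in Table~3 (the paper phrases this dually, matching each cusp to the generator of the vanishing ideal in $M_*(\Gamma_1([p/2]))$ and identifying that generator with a factor of the discriminant via \eqref{eq:A2theta}--\eqref{eq:G2theta}). Your additional $\mathbb{P}^1$-compactification framing is a pleasant geometric repackaging but not needed for the argument.

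One small correction on type $\mathrm{A_2}$: the ``factors'' $\pm\sqrt{27}\,g_l+g_s^{3/2}$ are not polynomials in $g_s,g_l$ at all (the exponent $3/2$ is not integral), so $-27g_l^2+g_s^3$ is irreducible already over $\mathbb{C}[g_s,g_l]$, not merely over $\mathbb{Q}$; there is no Galois-descent issue, and complex conjugation on the real number $\sqrt{27}$ does nothing. The paper's footnote after \eqref{eq:discriminant} makes the same point: that factorization is only a real-analytic bookkeeping device for the boundary edges $\partial_\pm\Gamma_{\mathrm{I_2}(p)}$, not an algebraic decomposition.
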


We show that a generator of the ideal of the ring $M_*(\Gamma_1([p/2]))$ of modular forms vanishing at each equivalence class of cusp points, which can be easily found from Table 4, is, by the pull back by the period map \eqref{eq:periodmap},  up to a constant factor, identified with an irreducible component of the equation $\Delta_{\mathrm{I_2}(p)} \in \mathbb{C}[g_s,g_l]$ \eqref{eq:discriminant} of  the discriminant. 

Here, we may recall Footnote 27 again. 


\medskip
\noindent
{\bf $\mathrm{A_2}$ type:}  \ 
Recall that there is a unique equivalence class of cusps which is represented by $\sqrt{-1}\infty$. The ideal vanishing at the class is generated by $e_4^3-e_6^2$. We also recall the equation of the discriminant \eqref{eq:discriminant} of type $\mathrm{A_2}$.  Then, the identification \eqref{eq:A2theta} induces the following identity:
\begin{equation}
\label{eq:A2DiscriminantModular}
-27g_l^2+g_s^3 \quad = \quad \frac{\pi^{12}}{1728}(e_4^3-e_6^2)\ \omega_0^{-12} 
\end{equation}

\medskip
\noindent
{\bf $\mathrm{B_2}$ type:}  \ 
Recall that there are two equivalence classes of cusps, which are represented by $\sqrt{-1}\infty$ and by 0. The ideal vanishing at the class $\sqrt{-1}\infty$ is generated by $\beta_4$, and 
the ideal vanishing at the class $0$ is generated by $\al_2^2-64 \beta^4$. We  recall the irreducible  factors of the equation of the discriminant \eqref{eq:discriminant} of type $\mathrm{B_2}$ are $-8g_l+g_s^2$ and $8g_l+g_s^2$. 
Then, the identification \eqref{eq:B2theta} induces the following identities.
\begin{equation}
\label{eq:B2DiscriminantModular}
\begin{array}{lcl}
\ \ \ 8g_l+g_s^2 &\ =  \  &  128 \pi^4 \beta_4\ \omega_0^{-4}  \\
-8g_l+g_s^2\ (=-G_2(\frac{1}{2}\omega_0))& \ =  \  &   2\pi^4 (\alpha_2^2 -64\beta_4)\ \omega_0^{-4} 
\end{array}
\end{equation}

\medskip
\noindent
{\bf $\mathrm{G_2}$ type:}  \ 
Recall that there are two equivalence classes of cusps, which are represented by $\sqrt{-1}\infty$ and by 0. The ideal vanishing at the class $\sqrt{-1}\infty$ is generated by $\frac{1}{54}(\alpha_1^3-\beta_3)$, and the ideal vanishing at the class $0$ is generated by $\frac{1}{2}(\alpha_1^3+\beta_3)$.
We recall the irreducible  factors of the equation of the discriminant \eqref{eq:discriminant} of type $\mathrm{G_2}$ are $g_l+2g_s^3 $ and $g_l -2g_s^3$. 
Then,  the identification \eqref{eq:G2theta} induces the following identities.
\begin{equation}
\label{eq:G2DiscriminantModular}
\begin{array}{lcl}
g_l+2g_s^3 & = \ \ \ \frac{2}{3\sqrt{3}} \pi^3 (\alpha_1^3-\beta_3) \ \omega_0^{-3} &   \\
g_l -2g_s^3 \ (=-G_3(\frac{1}{3}\omega_0))& \ = \ -\frac{2}{3\sqrt{3}} \pi^3 (\alpha_1^3+\beta_3) \ \omega_0^{-3} & 
\end{array}
\end{equation}

\bigskip
\section{Discriminant Conjecture}

This is the last section of the study of the period map of types  $\mathrm{A_2}$, $\mathrm{B_2}$ and $\mathrm{G_2}$.
We recover the classical {\it modular discriminant formula} for all types, and answer in  {\bf Theorem} \ref{cuspform} to the discriminant conjecture posed in \cite{S5} \S6.
Let us recall the conjecture in the original form. 

\medskip
\noindent
{\bf Conjecture 6.} {\it Let $W$ be a crystallographic finite reflection group.  Is the $k(W)$th power root of 
$\delta_W$, say $\lambda_W$ (up to a constant factor), an automorphic form for the group $\Gamma(W)$ with the character $\vartheta_W$? {\small Can one find  an infinite product expression for $\lambda_W$ compatible with Conjecture 4?}}

\medskip
We first explain notation in the conjecture. In \cite{S5}, $W$ is a Weyl group for an irreducible finite root system of any type, but in the present paper, we restrict ourselves only to the types $\mathrm{A_2}$, $\mathrm{B_2}$ or $\mathrm{G_2}$. The number $k(W)$ and the character $\vartheta_W$ are defined in \cite{S5} \S6, but, in the present paper, in \eqref{eq:k} and \eqref{eq:character}, respectively. The $\delta_W$ is a  generator of anti-invariants whose square is the discriminant $\Delta_{{\mathrm{I_2}(p)}}$. We shall explain about $\delta_W$ and $\lambda_W$ again in Theorem \ref{cuspform}.
The modular group $\Gamma(W)$ is, in \cite{S5} \S6, 6.4 Example, unfortunately, wrongly stated to be equal to the congruence group $\Gamma_0([p/2])$.  However, according to a result \eqref{eq:congruence1} in the present paper, the group  $\Gamma(W)$ should be corrected to be $\Gamma_1([p/2])$.\footnote{
In fact,  $\Gamma_0(N)=\Gamma_1(N)$ for $N=1,2$, but $\Gamma_0(3)$ is a double extension of  $\Gamma_1(3)$ by $\pm id$. Since in \cite{S5}, one use the same generators as \eqref{eq:monodromy2} in the present paper as for the generators of $\Gamma(I_2(p))$, practically the calculations in \cite{S5} are still meaningful, and we shall use them in the present paper.
}
The Conjecture 4 mentioned in the Conjecture 6 is  something about the ``liftablity" of the ring of Eisenstein series to the ring of  the ``Galois" covering by the Weyl group of the type $\mathrm{I_2}(p)$. However, we will not discuss on this in the present paper.

\medskip
The answer to Conjecture is given by a use of Dedekind eta function $\eta(\tau):=q^{1/24}\prod_{n=1}^\infty (1-q^n)$ ($q=\exp{(2\pi\sqrt{-1}\tau)}$). Recall that a function in $\tau\in\mathbb{H}$ is called an eta-quotient if it has a finite product/quotient expression $\prod_{i=1}^s\eta^{r_i}(m_i\tau)$ where $s, m_i\in\mathbb{Z}_{>0}$, $r_i\in\mathbb{Z}$.
A holomorphic modular form that is non-vanishing on $\mathbb{H}$ and has integer Fourier coefficients at infinity,  is an integer multiple of an eta-quotient (see \cite{R-W}).

Let us come back to the equalities \eqref{eq:A2DiscriminantModular}, \eqref{eq:B2DiscriminantModular} and \eqref{eq:G2DiscriminantModular}, and show that they admit eta-quotient expressions.  LHSs, as  defining equations of irreducible components of the discriminants,  do not vanish on $S_{\mathrm{I_2}(p)}\setminus D_{\mathrm{I_2}(p)}$. So, after the identification \eqref{eq:identification},  they do not vanish on $\widetilde{\mathbb{H}}$. On the other hand, those generators $e_4,e_6$, $\alpha_2,\beta_4$, $\alpha_1,\beta_2$ in RHS, are described by theta-functions, and, therefore, have integral Fourier coefficients at infinity (see \cite{A-I}). Then, we have the following expressions.

\vspace{-0.1cm}
$$
\begin{array}{cl}
\Gamma_1(1): &  \frac{e_4^3-e_6^2}{1728}=\eta(\tau)^{24}, \\
\Gamma_1(2): & \beta_4=\frac{\eta(2\tau)^{16}}{\eta(\tau)^8}, \quad
\alpha_2^2-64\beta_4 =\frac{\eta(\tau)^{16}}{\eta(2\tau)^8}, \\
\Gamma_1(3): & \frac{1}{54}(\alpha_1^3-\beta_3)=\frac{\eta(3\tau)^9}{\eta(\tau)^3}, \quad
\frac{1}{2}(\alpha_1^3+\beta_3)=\frac{\eta(\tau)^9}{\eta(3\tau)^3}
\end{array}
$$
\smallskip
\centerline{\rm\large  Table 5:\quad {\normalsize eta-quotients of irreducible components of the disriminant}}

\smallskip
({\it Proof.} Case of type $\mathrm{A_2}$ is classical (e.g.\  \cite{B-G-H-Z,F-B}). 

Case of type $\mathrm{B_2}$: Because of level 2 condition, candidates of eta quotients are of the form $c\eta(\tau)^p\eta(2\tau)^q$ for unknowns $c\in \mathbb{C}$ and $p,q\in \mathbb{Z}$, satisfying {\small $p+q=2\cdot weight=8$}. Recall Table 5, so that
{\small $\beta_4|4E (\sqrt{-1}\infty)=0$} (simple zero in $q$), {\small $\beta_4|4S (\sqrt{-1}\infty)=1/256$} and
{\small $\alpha_2^2-64\beta_4|4E (\sqrt{-1}\infty)=1$, $\alpha_2^2-64\beta_4|4S (\sqrt{-1}\infty)=0$}  (simple zero in $q$). Posing these constraints on the eta-quotients, we determine $c, p$ and $q$, and obtain the expression.  

Similar proof works for the type $\mathrm{G_2}$. $\Box$)

\medskip
Applying above expressions for  \eqref{eq:A2DiscriminantModular}, \eqref{eq:B2DiscriminantModular} and \eqref{eq:G2DiscriminantModular}, we are now able to express the discriminant form $\Delta_{\mathrm{I_2}(p)}$ \eqref{eq:discriminant} and its reduced  form $\Delta^{red}_{\mathrm{I_2}(p)}$ by some products of eta quotients. As is expected (since discriminant vanishes on both cusps), the results are no-longer {\it eta-quotients}, but are {\it eta-products}, i.e.\ they don't have denominators. 
 
 \begin{equation}
\label{eq:discriminant-eta}
\begin{array}{rccr}
\Delta_{\mathrm{A_2}}\ =&\!\! -27g_l^2+g_s^3 \!\!&=&  \ \ \pi^{12}\ \ \eta(\tau)^{24} \ \omega_0^{-12} \\
\\
\Delta_{\mathrm{B_2}}\ =&\!\! (8g_l+g_{s}^2)(-8g_l+g_{s}^2)^2&=&512\ \pi^{12}\ \eta(\tau)^{24} \ \omega_0^{-12} \\
\\
\Delta_{\mathrm{G_2}}\ =&\!\! 
  (g_l+2g_{s}^3)(-g_l+2g_{s}^3)^3&=&-\frac{27}{4} \ \pi^{12}\ \eta(\tau)^{24} \ \omega_0^{-12} \\
\end{array}
\end{equation}
\centerline{\bf Table 6.  Eta-product expressions of discriminants}

\medskip
 \begin{equation}
\label{eq:reduced-discriminant-eta}
\begin{array}{rcr}
\Delta_{\mathrm{A_2}}^{red}\ =&\!\! -27g_l^2+g_s^3 \ =&\!  \pi^{12}\ \eta(\tau)^{12}\eta(\tau)^{12} \ \omega_0^{-12} \\
\\
\Delta_{\mathrm{B_2}}^{red}\ =&\!\! -64g_l^2+g_s^4 \  =&\! 256 \pi^8\ \eta(\tau)^8 \eta(2\tau)^8 \ \omega_0^{-8} \ \\
\\
\Delta_{\mathrm{G_2}}^{red}\ =&\!\! -g_l^2+4g_s^6  \ \ =&\! -4 \pi^6\ \eta(\tau)^6\eta(3\tau)^6 \ \omega_0^{-6} \ \\
\end{array}
\end{equation}
\centerline{\bf Table 7.  Eta-product expressions of reduced discriminants}

\bigskip
We formulate final Theorem of the present paper, where both formulae \eqref{eq:reduced-discriminant-eta} and \eqref{eq:discriminant-eta} give answers to the first half and the latter half of Conjecture, respectively.

\begin{theorem}
\label{cuspform}
1. {\it For all three types $\mathrm{A_2}$, $\mathrm{B_2}$ and $\mathrm{G_2}$, set 
\begin{equation}
\label{eq:lambda}
\lambda_{\mathrm{I_2}(p)}(\tau) := \eta(\tau)\eta([p/2]\tau) .
\end{equation}
Then, }
(i)  {\it $\lambda_{\mathrm{I_2}(p)}(\tau)$ is a modular form of weight 1 of the group $\Gamma_1([p/2])$ with respect to the character $\vartheta_{\mathrm{I_2}(p)}$  \eqref{eq:character},  }

(ii) {\it The power  $\delta_{\mathrm{I_2}(p)}:= \lambda_{\mathrm{I_2}(p)}(\tau)^{k(\mathrm{I_2}(p))}$ is a generator of the module anti-invariant modular forms (recall \eqref{eq:k} for $k(\mathrm{I_2}(p))$),  }

(iii) {\it The power $\lambda_{\mathrm{I_2}(p)}(\tau)^{2k(\mathrm{I_2}(p))}$, up to a non-zero constant factor, corresponds by \eqref{eq:identification} to the reduced discriminant form $\Delta^{red}_{\mathrm{I_2}(p)}$.}

\smallskip
2. {\it  For all three types $\mathrm{A_2}$, $\mathrm{B_2}$ and $\mathrm{G_2}$, the discriminant form $\Delta_{\mathrm{I_2}(p)}$ \eqref{eq:discriminant}, up to a non-zero constant factor, corresponds by \eqref{eq:identification} to 
$$
\begin{array}{c}
q\prod_{n=1}^\infty(1-q^n)^{24} 
\end{array}
$$

called the modular discriminant (or discriminant function).}
\end{theorem}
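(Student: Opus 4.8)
The plan is to assemble Theorem \ref{cuspform} directly from the eta-product formulas already established in Tables 6 and 7, together with the character $\vartheta_{\mathrm{I_2}(p)}$ from \eqref{eq:character} and the numbers $k(\mathrm{I_2}(p))$ from \eqref{eq:k}. For part 1(iii), the identity $\lambda_{\mathrm{I_2}(p)}^{2k(\mathrm{I_2}(p))} = (\eta(\tau)\eta([p/2]\tau))^{2k(\mathrm{I_2}(p))}$ must be matched with $\Delta^{red}_{\mathrm{I_2}(p)}$ up to a constant. Checking the exponents: for $\mathrm{A_2}$, $2k=12$ and $[p/2]=1$, giving $\eta(\tau)^{12}\eta(\tau)^{12}=\eta(\tau)^{24}$, which is exactly the RHS of the first line of \eqref{eq:reduced-discriminant-eta}; for $\mathrm{B_2}$, $2k=8$, $[p/2]=2$, giving $\eta(\tau)^8\eta(2\tau)^8$, matching the second line; for $\mathrm{G_2}$, $2k=6$, $[p/2]=3$, giving $\eta(\tau)^6\eta(3\tau)^6$, matching the third line. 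So 1(iii) is immediate once I invoke \eqref{eq:reduced-discriminant-eta}. Similarly, part 2 follows from Table 6 \eqref{eq:discriminant-eta}, since in all three cases the RHS is a nonzero constant times $\pi^{12}\eta(\tau)^{24}\omega_0^{-12}$, and $\eta(\tau)^{24}=q\prod_{n\ge1}(1-q^n)^{24}$ by definition of $\eta$; under the identification \eqref{eq:identification} the factor $\omega_0^{-12}$ is precisely what converts a weight-$12$ form in the wide sense to a function on $\widetilde{\mathbb H}$, so $\Delta_{\mathrm{I_2}(p)}$ corresponds to the modular discriminant up to the stated rational constant times $\pi^{12}$.

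For part 1(i), I need to show $\lambda_{\mathrm{I_2}(p)}(\tau)=\eta(\tau)\eta([p/2]\tau)$ is a modular form of weight $1$ for $\Gamma_1([p/2])$ with multiplier $\vartheta_{\mathrm{I_2}(p)}$. The transformation law of $\eta$ under $\mathrm{SL}_2(\mathbb Z)$ is classical (the Dedekind $\eta$-multiplier), so $\eta(\tau)$ has weight $1/2$ and $\eta([p/2]\tau)$ has weight $1/2$ after the scaling, giving total weight $1$; the product of the two multiplier systems is a character on $\Gamma_1([p/2])$ because $[p/2]\tau$ transforms nicely under the congruence subgroup. I would verify that this resulting character sends the generators $\tilde a,\tilde b$ (equivalently the matrices $A=\rho(\tilde a)$, $B=\rho(\tilde b)$ of \eqref{eq:monodromy2}) to $\exp(\pi\sqrt{-1}/k(\mathrm{I_2}(p)))$, matching \eqref{eq:character}. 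The cleanest route is actually to go backwards: since $\lambda^{2k}$ corresponds to $\Delta^{red}$ which is a genuine (trivial-character) modular form of weight $2k$ for $\Gamma_1([p/2])$ and is nowhere vanishing on $\mathbb H$, its $2k$-th root $\lambda$ is a weight-$1$ form for some character $\chi$ of $\Gamma_1([p/2])$ with $\chi^{2k}=1$; then the eta-quotient structure forces $\chi$ to be the $\eta$-multiplier combination, and a direct evaluation on $\tilde a,\tilde b$ pins down $\chi=\vartheta_{\mathrm{I_2}(p)}$ via the computation already recorded in the proof of \eqref{eq:character} (the $\vartheta$-image of the central element \eqref{eq:center} is $1$, consistent with $\lambda^{2k}$ having trivial character). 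For part 1(ii), $\delta_{\mathrm{I_2}(p)}=\lambda^{k}$ has character $\vartheta_{\mathrm{I_2}(p)}^{k}=\theta_{\mathrm{I_2}(p)}$ by \eqref{eq:anti-invariant}, i.e. the sign character, so $\delta_{\mathrm{I_2}(p)}$ is anti-invariant; that $\delta_{\mathrm{I_2}(p)}^2=\Delta^{red}$ (or $\Delta$) identifies it as the square root of the discriminant, hence a generator of the rank-one module of anti-invariant modular forms, since any anti-invariant form is divisible by $\delta_{\mathrm{I_2}(p)}$ with invariant quotient.

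The main obstacle I anticipate is part 1(i)–(ii): making the multiplier-system bookkeeping fully rigorous. One must be careful that the $\eta$-multiplier is a genuine multiplier system (a $24$-th root of unity cocycle), that restricting to $\Gamma_1([p/2])$ and multiplying the multipliers of $\eta(\tau)$ and $\eta([p/2]\tau)$ yields an honest character (not just a multiplier system) — this uses that $[p/2]\in\{1,2,3\}$ is small and that $\Gamma_1([p/2])$ sits inside $\Gamma_0([p/2])$ where $\eta([p/2]\tau)$ behaves well — and finally that this character agrees with $\vartheta_{\mathrm{I_2}(p)}$ on the specific generators $\tilde a,\tilde b$. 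I would handle this by explicit evaluation: compute $\eta$ under $T=\begin{bmatrix}1&1\\0&1\end{bmatrix}$ and $ST S$-type elements that represent $\tilde a$, $\tilde b$ in $\Gamma_1([p/2])$, obtaining $\exp(2\pi\sqrt{-1}/24)$-type factors, and check the exponent matches $\pi\sqrt{-1}/k(\mathrm{I_2}(p))$ with $k=6,4,3$. Everything else (parts 1(iii) and 2) is a direct consequence of the already-proven Tables 6 and 7, so only minimal coordination is needed there.
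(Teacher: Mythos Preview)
Your proposal is correct and follows essentially the same approach as the paper. Parts 1(iii) and 2 are, as you say, immediate from Tables~6 and~7, and part 1(ii) follows from 1(i) via $\vartheta^{k}=\theta$ exactly as you outline; for part 1(i) the paper does precisely the direct evaluation you describe at the end (its Lemma computes $\tilde b_N^*(\lambda_N\omega_0^{-1})$ and $(\tilde a_N^*)^{-1}(\lambda_N\omega_0^{-1})$ using $\eta(\tau+1)=\zeta\eta(\tau)$ and $\eta(-1/\tau)=\sqrt{\tau/\sqrt{-1}}\,\eta(\tau)$ to obtain the factor $\zeta^{N+1}=\exp(\pi\sqrt{-1}/k(\mathrm{I_2}(p)))$), rather than the ``backwards'' $2k$-th-root argument you float first --- and indeed your concern about multiplier-system versus character is moot here, since weight~$1$ is an integer weight so the resulting multiplier is automatically a character.
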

\vspace{-0.2cm}
\footnote{
\vspace{-0.1cm}
All results are well-known for the type $\mathrm{A_2}$. However, for the other types  $\mathrm{B_2}$ and $\mathrm{G_2}$, at present, there is no geometric explanation why such power roots of the reduced  discriminant $\Delta_{\mathrm{I_2}(p)}^{red}$ form modular forms with character, and why all three geometric  discriminants  $\Delta_{\mathrm{I_2}(p)}$ take the same modular discriminant form. What are  ${\lambda_{\mathrm{I_2}(p)}}' s$?  Does mirror symmetric interpretation  give a hint to understand them? 
}
\begin{proof}
1.  (i) The calculations given in \cite{S5} to show the modularity of $\lambda_{\mathrm{I_2}(p)}$ with the character $\vartheta$  are still valid. For a sake of completeness of the present paper, we recall it by adjusting notations.

For simplicity, we introduce a number $N\in\mathbb{Z}_{>0}$ called level, where $N=[p/2]$ in case of type $\mathrm{A_2, B_2}$ and $\mathrm{G_2}$. 
Set $\zeta:=\exp{(\pi\sqrt{-1} /12)}$ so that $\zeta^{N+1}=\exp{( \pi \sqrt{-1}/k(\mathrm{I_2}(p)))}$  ($N=1,2,3$ and $p=3,4,6$).  In view of \eqref{eq:monodromy2}, \eqref{eq:congruence1} and \eqref{eq:character},  it is sufficient to show the following.

\begin{lem} {\it The  $\lambda_N:=\eta(\tau)\eta(N\tau)$  for $N\in \mathbb{Z}_{>0}$ is a modular form  of $\Gamma_1(N)$  with a character  $\vartheta_{N}$, where the character  satisfies 
$$\vartheta_{N}: \
\tilde{a}_N
:=
\begin{bmatrix}
1 & 0 \\
-N & 1
\end{bmatrix}, 
 \
 \tilde{b}_N
 :=
 \begin{bmatrix}
 1 & 1\\
 0 & 1
 \end{bmatrix}
\quad
\mapsto
\quad
\zeta^{N+1}\in \mathbb{C}^\times
$$}
\end{lem}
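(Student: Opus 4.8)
The plan is to reduce the modularity-with-character statement for $\lambda_N=\eta(\tau)\eta(N\tau)$ to the classical transformation law of the Dedekind eta function under $\mathrm{SL}_2(\mathbb{Z})$. First I would recall that $\eta(\tau+1)=\zeta^2\eta(\tau)$ (with $\zeta=\exp(\pi\sqrt{-1}/12)$) and $\eta(-1/\tau)=\sqrt{-\sqrt{-1}\,\tau}\;\eta(\tau)$, and more generally that for $\gamma=\begin{bmatrix}a&b\\c&d\end{bmatrix}\in\mathrm{SL}_2(\mathbb{Z})$ with $c>0$ one has $\eta(\gamma\tau)=\varepsilon(\gamma)\,(c\tau+d)^{1/2}\eta(\tau)$ where $\varepsilon(\gamma)$ is the explicit Dedekind $\eta$-multiplier (a $24$th root of unity expressible via Dedekind sums). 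Since $\Gamma_1(N)$ is generated by $\tilde a_N=\begin{bmatrix}1&0\\-N&1\end{bmatrix}$ and $\tilde b_N=\begin{bmatrix}1&1\\0&1\end{bmatrix}$ (this is the content of Fact 9.2 transported through \eqref{eq:monodromy2}), it suffices to verify the claimed transformation behaviour on these two generators and check consistency with the defining relation \eqref{eq:relation}, which is automatic once the values are consistent because $\vartheta_N$ was already shown in \S4 to be a well-defined character.

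The key steps, in order, are: (1) For $\tilde b_N$: compute $\lambda_N(\tau+1)=\eta(\tau+1)\eta(N\tau+N)=\zeta^2\eta(\tau)\cdot\zeta^{2N}\eta(N\tau)=\zeta^{2N+2}\lambda_N(\tau)=\zeta^{N+1}\cdot\zeta^{N+1}\lambda_N(\tau)$; wait—this needs care, so I will track the power precisely and confirm it equals $\zeta^{N+1}\lambda_N(\tau)$ using that $\eta(N\tau+N)=\eta(N(\tau+1))=\zeta^{2N}\eta(N\tau)$ and $2+2N\equiv N+1\pmod{24}$ fails in general, so the correct bookkeeping is $\lambda_N(\tau+1)=\zeta^{2+2N}\lambda_N(\tau)$, and one must observe that $\zeta^{2+2N}$ is precisely the image $\vartheta_N(\tilde b_N)^{?}$—here I would instead directly compare $\zeta^{2+2N}$ with the required $\zeta^{N+1}$ and, if they differ, absorb the discrepancy into the weight-$1$ automorphy factor $(c\tau+d)=1$ (trivial for $\tilde b_N$), concluding that the character value on $\tilde b_N$ is forced to be $\zeta^{2+2N}$; consistency with \S4 then pins down which normalization of $\vartheta_N$ is in play. (2) For $\tilde a_N$: write $\tilde a_N\tau=\tau/(-N\tau+1)$, so $N\cdot(\tilde a_N\tau)=N\tau/(-N\tau+1)=(-1/\tau)\big/\!\big((-1/\tau)(1/N)\cdot(-N)+\dots\big)$—more cleanly, I would factor $\tilde a_N$ through the matrix identity $\begin{bmatrix}1&0\\-N&1\end{bmatrix}=\begin{bmatrix}0&-1\\1&0\end{bmatrix}^{-1}\begin{bmatrix}1&N\\0&1\end{bmatrix}\begin{bmatrix}0&-1\\1&0\end{bmatrix}$ at the level of the $\tau$-variable versus the $N\tau$-variable, apply $\eta(-1/w)=\sqrt{-\sqrt{-1}w}\,\eta(w)$ twice (once with $w=\tau$ and once with $w=N\tau$), apply $\eta(w+N)=\zeta^{2N}\eta(w)$ in between, and collect the square-root factors: the two $\sqrt{-\sqrt{-1}\,\cdot\,}$ contributions multiply to give exactly the weight-$1$ factor $(-N\tau+1)^{1}$ (up to the standard sign determination of the branch), and the leftover root of unity is the character value $\vartheta_N(\tilde a_N)=\zeta^{2N}$ or its relevant adjustment. (3) Finally I would verify that the two computed root-of-unity values are compatible with the braid relation \eqref{eq:relation} of length $p$ (equivalently $p=3,4,6$ for $N=1,2,3$), which forces $\vartheta_N(\tilde a_N)=\vartheta_N(\tilde b_N)=\zeta^{N+1}$, matching the statement; the consistency is exactly the computation already done abstractly for $\vartheta_{\mathrm{I_2}(p)}$ in \S4, so this step is bookkeeping.

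The main obstacle will be \textbf{Step (2)}: the eta-multiplier $\varepsilon(\gamma)$ for a non-triangular element like $\tilde a_N$ is not simply a power of $\zeta$, and naively one would have to evaluate a Dedekind sum $s(d,c)$ with $c=-N$, $d=1$. The clean way around this is to avoid the general multiplier formula entirely and instead use only the two ``generators'' $T:\tau\mapsto\tau+1$ and $S:\tau\mapsto-1/\tau$ of $\mathrm{SL}_2(\mathbb{Z})$, for which the eta transformation is elementary, together with the scaling relation between $\eta(\tau)$ and $\eta(N\tau)$; writing $\tilde a_N$ as a word in $S$ and $T$ applied appropriately to the two arguments $\tau$ and $N\tau$ makes all square-root branch choices explicit and traceable, and the weight-$1$ factor emerges as the product of the two half-integral-weight factors coming from the two applications of $S$. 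One must be careful that the branch of the square root is chosen consistently (the standard convention $\arg\in(-\pi/2,\pi/2]$ for $(c\tau+d)^{1/2}$), since an incorrect branch would flip a sign and spoil the character; checking this sign against the known case $N=1$ (where $\lambda_1=\eta^2$ and $\vartheta_1(\gamma)=\varepsilon(\gamma)^2$ is classical) provides a sanity check. Once Step (2) is settled, Steps (1) and (3) are short, and the lemma — hence part 1(i) of Theorem \ref{cuspform} — follows.
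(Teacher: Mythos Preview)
Your overall strategy---check the transformation on the two generators $\tilde a_N$ and $\tilde b_N$ using only the elementary formulas for $\eta$ under $\tau\mapsto\tau+1$ and $\tau\mapsto-1/\tau$---is exactly what the paper does. The confusion you encounter in Step~(1) is not a genuine obstacle but a slip in the transformation law: with $\zeta=\exp(\pi\sqrt{-1}/12)$ one has
\[
\eta(\tau+1)=\zeta\,\eta(\tau),
\]
not $\zeta^2\eta(\tau)$. (Recall $\eta(\tau)=q^{1/24}\prod(1-q^n)$ with $q=e^{2\pi i\tau}$, so shifting $\tau$ by $1$ multiplies $q^{1/24}$ by $e^{2\pi i/24}=e^{\pi i/12}=\zeta$.) With the correct exponent, Step~(1) collapses to one line:
\[
\lambda_N(\tau+1)=\eta(\tau+1)\,\eta(N\tau+N)=\zeta\,\eta(\tau)\cdot\zeta^N\eta(N\tau)=\zeta^{N+1}\lambda_N(\tau),
\]
which is already the claimed character value on $\tilde b_N$, and no reconciliation with $\zeta^{2N+2}$ is needed.

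For Step~(2), the paper carries out precisely your proposed factorization $\tilde a_N=S^{-1}T^NS$ at the level of arguments, applying $\eta(-1/w)=\sqrt{w/\sqrt{-1}}\,\eta(w)$ twice and $\eta(w+N)=\zeta^N\eta(w)$ once (not $\zeta^{2N}$), and the square-root factors combine to the weight-$1$ automorphy factor; the leftover root of unity comes out to $\zeta^{-N-1}$ for $(\tilde a_N^*)^{-1}$, i.e.\ $\zeta^{N+1}$ for $\tilde a_N^*$. Your Step~(3) is unnecessary: once the values on the generators are computed, the fact that they define a character of $\Gamma_1(N)$ is already secured by \S4 (or by the general fact that any eta-quotient transforms under $\Gamma_1(N)$ with \emph{some} character), so there is nothing further to verify.
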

\noindent
{\it Proof of Lemma 10.2.} By definition,  $\lambda_N$ is automatically a modular form with a character of the group $\Gamma_1(N)$ (see, e.g.\  \cite{R-W}). Recalling \eqref{eq:tau} and \eqref{eq:identification}, we have only to show $(\tilde{a}_N^*)^{-1}(\lambda_N\omega_0^{-1})= \zeta^{-N-1}(\lambda_N\omega_0^{-1})$ and $\tilde{b}_N^*(\lambda_N\omega_0^{-1})= \zeta^{N+1}(\lambda_N\omega_0^{-1})$. In the following, we shall use a  sign convention on the monodromy of the eta-function from \cite{Ko} p121.

Recall \eqref{eq:tau},  $\tilde{b}_N^*(\tau)=\tau+1$ and $\tilde{b}_N^*(\omega_0)=\omega_0$. Then, using the transformation formula $\eta(\tau+1)=\zeta \eta(\tau)$, we obtain: 
$$
\begin{array}{rcl}
\tilde{b}_N^*(\lambda_N\omega_0^{-1})& := &\eta(\tau+1)\eta(N(\tau+1))\omega_0^{-1}\\
 &\ =&   \zeta \eta(\tau) \ \zeta^N\eta(N\tau) \omega_0^{-1} \quad =\quad    \zeta^{N+1} (\lambda_N\omega_0^{-1})
 \end{array}
 $$
 Recall \eqref{eq:tau} so that $\tilde{a}_N^*(\tau)=\frac{\tau}{1-N\tau}$ and $\tilde{a}_N^*(\omega_0)=\omega_0-N\omega_1= \omega_0(1-N\tau)$. Then, using the transformation formula $\eta(-/\tau)=\sqrt{\frac{\tau}{\sqrt{-1}}} \eta(\tau)$, we obtain: 
$$
\begin{array}{rcl}
(\tilde{a}_a^*)^{-1}(\lambda_N\omega_0^{-1})& := &\eta(\frac{\tau}{1+N\tau})\eta(\frac{N\tau}{N\tau+1}))\frac{\omega_0^{-1}}{1+N\tau}\\
 &\ =&   \sqrt{\frac{-(N\tau+1)/\tau}{\sqrt{-1}} } \eta(-N-1/\tau) \ \zeta\eta(-\frac{1}{N\tau+1}) \frac{\omega_0^{-1}}{1+N\tau}\\
 &\ =&\sqrt{\frac{-(N\tau+1)/\tau}{\sqrt{-1}}\frac{\tau}{\sqrt{-1}}\frac{N\tau+1}{\sqrt{-1}}} \zeta^{-N}\eta(\tau) \ \zeta\eta(N\tau+1) \frac{\omega_0^{-1}}{1+N\tau} \\
 &\ =& \zeta^{-N+2}\sqrt{\frac{1}{\sqrt{-1}}} \eta(\tau) \eta(N\tau) \omega_0^{-1}\\
&\ =&\zeta^{-N-1}\zeta(\tau)\eta(N\tau)\omega_0^{-1} \quad =\quad    \zeta^{-N-1} (\lambda_N\omega_0^{-1})
 \end{array}
 $$
 End of Proof of Lemma 10.2. $\Box$
 
\medskip
 (ii)  Recalling the character $\theta$ \eqref{eq:anti-invariant}, $\delta_{\mathrm{I_2}(p)}$ is obviously an anti-invariant of the group $\Gamma_1(N)$. 
 
 (iii) This a paraphrase of \eqref{eq:reduced-discriminant-eta}.
 
 \smallskip
 2.  This is only a paraphrase of \eqref{eq:discriminant-eta}.
\end{proof}

\begin{remark}
In \cite{S5} \S6,  we formulated 6 conjectures. The present paper gives positive answers to all conjectures for the types  $\mathrm{A_2, B_2}$ and $\mathrm{G_2}$. 
The  conjectures seem to be  still valid for all types of crystallographic reflection groups. That is,  we ask for a construction of Eisenstein series for all types to answer to Jacobi inversion problem, where we may need special consideration for  low weight cases as in the present paper. 
\end{remark}


\section{Concluding Remarks}


The findings of the  duplication formula for the Lemniscate arc length integral due to Fagnano (1718) and its generalization to the addition formula due to Euler (1751) were naturally understandable by inverting the variables, i.e.\ by parametrizing the Cartesian coordinates of the curve by the arc length. This led to the finding of new periodic functions, i.e.\ the  elliptic functions, beyond the trigonometric or exponential functions. It is impressive to see the historical developments caused by the finding of the elliptic integrals,  from the classical Abel-Jacobi theory through the modern mixed Hodge theory.

I, however, was attracted by other aspects of elliptic integrals. Namely, covariant differentiations of the elliptic integral of the first kind give other  kinds of elliptic integrals.  
 That is, the elliptic integral of the first kind is a {\it potential for all other periods}. I called this property the {\it primitivity} of the elliptic integral of the first kind.

The primitivity is combined with  another remarkable property of the theory. Namely, the 2 of the {\it rank} of the lattice of cycles used for the elliptic integrals is equal to the 2  of the {\it dimension} of the unfolding parameters $(g_s,g_l)$ of the elliptic curve. 
That is, the map from the space of parameters to the space of periods defined by the period integrals of  first kind becomes a morphism between the spaces of the same dimension. The Schottky type problem,  an unsolved problem in classical Abelian integral theory to determine the image set of periods (c.f.\ \cite{Si}), is resolved automatically in this setting! I called this property the {\it equi-dimensionality} of the period map of elliptic integrals.  

Then, it was again natural to ask for a description of the inverse morphism from the space of periods to the original parameter space.\! {\small Actually,} it was done by  {\it theta-series} by Jacobi and later by {\it Eisenstein series}.  We call this procedure  ``solutions to {\it Jacobi's inversion problem}". 

Following those works, I was inspired to look for (higher dimensional) analogs of the elliptic integral of the first kind, which carries the primitivity and the equi-dimensionality. That is the theory of integrals of {\it primitive forms} over Lefschetz vanishing cycles \cite{S1}.  If we look back some historical works from the view point of primitive forms,  the works by E. Picard \cite{Pic1883} (1883) and by G. Shimura \cite{Shi1963,Shi1964} (1963,1964) can be regarded already as some part of period integrals of primitive forms of type $\mathrm{E_6}$ and $\mathrm{E_8}, \mathrm{E_8^{(1,1)}}$ and some others, respectively.  

Nowadays, primitive forms become  a driving force for constructing new integrable hierarchies, and play a  role in  mirror symmetry from complex geometric side. 
However, this is one half of the primitive form theory, i.e.\ algebraic analytic aspects. The transcendental aspects, i.e.\ the period integral theory over Lefschetz vanishing cycles is 
missing still. Primitive forms can play their full original power only after they are integrated to   period maps, and the solutions to the Jacobi inversion problem  leads us to the study of  new transcendental functions \cite{S6}. 

However, the integral theory over closed cycles is a quite hard subject, since they form a closed world which is rigid and inflexible. We first need to embed them in a big ocean of integrals over open cycles, where we have wide freedom of making new pictures and theories, as was done in the original works of arc-length integrals by Fagnano and Euler. Also, the classical abelian integral theory by Riemann was successful through integrals over open intervals, called the Jacobian variety theory, where the inversion maps are described by theta functions \cite{Si}.

Thus, it was my pleasure  to reinterpret the classical elliptic integrals of the families for Weierstrass, Legendre-Jacobi and Hesse in terms of  integrals of primitive forms over vanishing cycles of types $\mathrm{A_2}, \mathrm{B_2}$ and $\mathrm{G_2}$ as in the present paper (types $D_4$ and $A_3$ are ongoing).  We tried to make clear the importance of integrals over open paths  by showing that their inversion functions are the solutions of {\it Hamilton's equation of motion} in \S6. That is, open integrals are inverse to certain  Dynamical systems.
Actually, this fact was the key reason, why we could determine the inversion map in \S7-9 to solve Jacobi's inversion problem by introducing  {\it generalized Eisenstein series} for each type.

We do not know yet what are the higher dimensional analog of them:  how to invert the period map to answer Jacobi's inversion problem,  how to generate the inversion functions (\cite{S4}). When the vanishing cycles form a finite root system, there are some conjectural descriptions  (\cite{S5}). 
{\small The positive answer to the conjectures for types $\mathrm{A_2}, \mathrm{B_2}$ and $\mathrm{G_2}$ in the present} paper by a use of the Dedekind {\it eta-function} is a toy model and did not give any new transcendental function. However, we may get enough hope to expect that the conjectures still hold for all other types of root systems.

%
 One may expect the mirror symmetry and the study of d-branes on open string theory in high energy physics may give some suggestions for the understanding of the period maps for primitive forms, since the study of the power roots of the discriminant, as done in the present paper,  is explained not from the primitive form side by itself but from the structure of the root system of the  vanishing cycles \cite{S5} which belongs to the mirror side, i.e.\ the symplectic geometry side.

\medskip
\noindent
{\bf Acknowledgements.}
The author expresses his gratitudes to Yoshihisa Obayashi, who draw Figures 1- 5 of the present paper. Particular thanks go to Hiroki Aoki, without whose helps the author may have not accomplish the identification of the ring of Eisenstein series of type $\mathrm{I_2}(p)$ with the ring of modular forms of the congruence group $\Gamma_1([p/2])$.   He expresses also gratitudes to Yoshihisa  Saito, Kenji Iohara, Takashi Takebe, Tomoyoshi Ibukiyama, Hiroyuki Yoshida, Masanobu Kaneko and Akio Fujii for helpful and inspiring discussions, and to Yota Shamoto for careful reading of manuscripts. This work was partially supported by JSPS KAKENHI Grant Number 18H01116.

\bibliographystyle{plain} 
\bibliography{saito}

\end{document}